\long\def\metanote#1#2{{\color{#1}\
\ifmmode\hbox\fi{\sffamily\mdseries\upshape [#2]}\ }}
\newcommand{\uni}{{\underline{i}}}
\newcommand{\unt}{{\underline{t}}}
\newcommand{\inte}{{\text{int}}}
\newcommand{\circdelta}{{\mathring{\Delta}}}
\newcommand{\ra}{\rightarrow}
\newcommand{\be}{\begin{equation}}
\newcommand{\ee}{\end{equation}}
\newcommand{\bi}{\begin{itemize}}
\newcommand{\ei}{\end{itemize}}
\newcommand{\commentout}[1]{}
\newcommand{\Geom}{{\text{Geom}}}
\newcommand{\Span}{{\text{span}}}
\newcommand{\TV}{\text{TV}}
\newcommand{\calT}{{\mathcal{T}}}
\newcommand{\calM}{{\mathcal{M}}}
\newcommand{\Leb}{{\text{Leb}}}
\newcommand{\calI}{{\mathcal{I}}}
\newcommand{\calF}{{\mathcal{F}}}
\newcommand{\calV}{{\mathcal{V}}}
\newcommand{\calW}{{\mathcal{W}}}
\newcommand{\calP}{{\mathcal{P}}}
\newcommand{\calQ}{{\mathcal{Q}}}
\newcommand{\Nm}{{\mathbb{N}}}
\newcommand{\Rm}{{\mathbb R}}
\newcommand{\Pm}{{\mathbb P}}
\newcommand{\expE}{{\mathbb E}}
\newcommand{\Ind}{{\mathbbm{1}}}
\newtheorem{theo}{Theorem}[section]
\newtheorem{lem}[theo]{Lemma}
\newtheorem{defin}[theo]{Definition}
\newtheorem{cond}[theo]{Condition}
\newtheorem{prop}[theo]{Proposition}
\newtheorem{rmk}[theo]{Remark}
\newtheorem{conj}[theo]{Conjecture}
\newtheorem{example}[theo]{Example}
\newtheorem{claim}[theo]{Claim}
\newtheorem{observation}[theo]{Observation}
\newtheorem{Assumletter}{Assumption}[section]
\newtheorem*{rep@theorem}{\rep@title}
\newcommand{\newreptheorem}[2]{%
\newenvironment{rep#1}[1]{%
 \def\rep@title{#2 \ref{##1}}%
 \begin{rep@theorem}}%
 {\end{rep@theorem}}}
\long\def\metanote#1#2{{\color{#1}\
\ifmmode\hbox\fi{\sffamily\mdseries\upshape [#2]}\ }}
\begin{document}
\setcounter{page}{1}

\title{Regularity of the stationary density for systems with fast random switching}
\author{Michel Bena\"im and Oliver Tough}
\date{December 7, 2022}

\maketitle

\begin{abstract}
We consider the piecewise-deterministic Markov process obtained by randomly switching between the flows generated by a finite set of smooth vector fields on a compact set. We obtain H\"ormander-type conditions on the vector fields guaranteeing that the stationary density is: $C^k$ whenever the jump rates are sufficiently fast, for any $k<\infty$; unbounded whenever the jump rates are sufficiently slow and lower semi-continuous regardless of the jump rates. Our proofs are probabilistic, relying on a novel application of stopping times.
\end{abstract}
\section{Introduction}\label{section:introduction}

In this paper we shall consider the stationary densities of piecewise-deterministic Markov processes (PDMPs) formed by randomly switching between finitely many deterministic flows. Under general conditions, these systems are known (\cite[Theorem 1]{Bakhtin2012} and \cite[Theorem 4.5]{Benaim2015a}) to possess a unique stationary distribution, which is absolutely continuous with respect to Lebesgue measure (so they have a unique stationary density). The main goal of this paper is to establish general conditions guaranteeing that the stationary density is $C^k$ whenever the jump rate is ``sufficiently fast'', for any $k<\infty$. The requirement that the jump rates be ``sufficiently fast'' is necessary: we shall also establish broad conditions guaranteeing that the density is unbounded whenever the jump rate is ``sufficiently slow''. 

In one dimension a complete description of the regularity and formation of singularities for the stationary densities has been established in \cite{Bakhtin2015} by Bakhtin, Hurth and Mattingly. Prior to the present paper, however, much less was known in dimension greater than $1$. In \cite{Bakhtin2017}, Bakhtin, Hurth, Lawley and Mattingly considered a system consisting of two vector fields on a two-dimensional torus, with each of the two vector fields assumed to admit a smooth invariant density. In this case they established the smoothness of the invariant density. Later, in \cite{Bakhtin2021}, the same authors considered a system consisting of two linear vector fields in two dimensions. They characterised when one has boundedness of the invariant density, and when one has formation of singularities. To the authors' knowledge, prior to the present paper this was all that was known in dimension $2$ or greater. In this paper, the dimension $d<\infty$ is arbitrary.

We now describe more precisely the systems we consider in this paper. We define 
\[
E=\{1,\ldots,n\}
\]
to be a finite set of ``states'', associated to which are the $C^{\infty}(\Rm^d)$ vector fields
\[
v^i\in C^{\infty}(\Rm^d),\quad i\in E,
\]
which we assume to be bounded (we shall see this incurs no loss of generality). These vector fields define flow maps $\varphi^1_t,\ldots,\varphi^n_t$, defined by
\begin{equation}\label{eq:flow map}
\varphi^i_t(x):=x_t\quad\text{whereby}\quad x_0=x\quad\text{and}\quad \dot{x}_t=v^i(x_t)\quad\text{for}\quad t\in \Rm,\quad x\in \Rm^d,\quad i\in E.
\end{equation}

We define $M$ to be a compact subset of $\Rm^d$, which we shall always assume to be forward invariant under the flow maps: for all $i\in E$ and $x\in M$, $\varphi^i_t(x)\in M$ for all $t\geq 0$. We shall only ever be interested in the behaviour on $M$, hence it is no loss of generality to assume the vector fields $v^1,\ldots,v^n$ are bounded.

\begin{rmk}
We note that $\Rm^d$ may be replaced in this paper with any Riemannian manifold, without any changes to the theorems or proofs.
\end{rmk}

We define $C_c^k(U)$ for open subsets $U\subseteq \Rm^d$ and $0\leq k\leq \infty$ to be $k$-times continuously differentiable functions (continuous if $k=0$) on $U$ with compact support. We further define $C_0^k(U)$ for open subsets $U\subseteq \Rm^d$ and $0\leq k\leq \infty$ to be
\[
\begin{split}
C^k_0(U):=\{f\in C^k(U):\text{for every multi-index $\alpha$ with $\lvert\alpha\rvert\leq k$ and $\epsilon>0$,}\\\text{there exists a compact subset $K\subseteq U$ such that $\lvert \partial^{\alpha}f\rvert\leq \epsilon$ on $U\setminus K$}\}. 
\end{split}
\]
We define $C_0^k(U\times E)$ similarly. For a given irreducible rate matrix $Q$ on $E$, we define $(I_t)_{0\leq t<\infty}$ to be the corresponding continuous-time Markov chain on $E$ with rate matrix $Q$, so that
\[
I_t:i\mapsto j\quad\text{at rate}\quad Q_{ij}.
\]
The spatial position $X_t$ then evolves according to the ODE 
\[
\dot{X}_t=v^{I_t}(X_t),\quad X_0=x\in M,\quad t\geq 0.
\]
This defines the process $(X_t,I_t)_{0\leq t<\infty}$ on $M\times E$. Having fixed the vector fields $v^1,\ldots,v^n$, the process $(X_t,I_t)_{0\leq t<\infty}$ is determined by the choice of rate matrix $Q$. We denote as $PDMP(Q)$ the PDMP, $(X_t,I_t)_{0\leq t<\infty}$, corresponding to a given choice of $Q$. We write $\pi^Q$ for the stationary density of $PDMP(Q)$ (we shall impose a condition on the vector fields guaranteeing the existence of a unique stationary density, Assumption \ref{assum:accessible Hormander point}). Our results then provide a qualitative description for how the regularity of $\pi^Q$ depends on $Q$ (under reasonably general conditions on the vector fields). We informally summarise our results as follows:
\begin{enumerate}
\item
In Theorem \ref{theo:lower semicts stat density}, we establish that $\pi^Q$ is lower semi-continuous for any $Q$.
\item
In Theorem \ref{theo:PDMPs with fast jump rates have Ck versions}, we establish that $\pi^Q\in C^k_0(\inte(M)\times E)$ whenever the jump rates $Q$ are ``sufficiently fast''. 
\item
In Theorem \ref{theo:PDMPs with slow jump rates have unbounded density}, conditions are provided guaranteeing $\pi^Q$ possesses singularities whenever the jump rates are ``sufficiently slow''. In particular, $\pi^Q$ is guaranteed to be infinite on subsets of the domain satisfying a certain condition, Condition \ref{cond:sets giving rise to unbounded densities}, whenever the rate matrix satisfies a certain inequality, \eqref{eq:inequality for unbounded density}.
\end{enumerate}

This is qualitatively (almost) the same as the quantitative description obtained in \cite{Bakhtin2015} for the $1$-dimensional case, where the stationary density was shown to be $C^{\infty}$ when the jump rates are faster than a certain parameter, but unbounded at critical points when the jump rates are slower than a certain parameter. This isn't necessarily to be expected, due to the possibility of complex dynamical systems behaviour in dimension $d>1$.

\subsection*{Structure of the Paper}

In the following section, Section \ref{section:statement of results}, we shall provide a rigorous statement of our results. In the following section, Section \ref{section:overview of proofs}, we shall provide an overview of the proof strategy for our theorems, which employ a similar method of proof. We will also provide in this section the statement and proofs of some propositions which shall be used in the different proofs. We will then prove theorems \ref{theo:lower semicts stat density}, \ref{theo:PDMPs with fast jump rates have Ck versions} and \ref{theo:PDMPs with slow jump rates have unbounded density} in sections \ref{section proof of lower semicts stat density}, \ref{section:proof of Ck density} and \ref{section:proof of unbounded density} respectively.

\section{Statement of Results}\label{section:statement of results}

Throughout, if $f\in L^1(\Leb)$ with a version of $f$ belonging to some class of functions, we shall abuse notation by simply saying that $f$ belongs to the same class of functions. For instance, $f\in C(M)$ means that a version of $f\in L^1(\Leb)$ is continuous. Moreover, when we refer to the density of a measure, we are referring its density with respect to Lebesgue measure. Furthermore, we shall write $dx$ for $\Leb(dx)$. 

For $\uni=(i_1,\ldots,i_{\ell})\in E^{\ell}$(for any $1\leq\ell<\infty$) we define
\begin{equation}\label{eq:map Phi i defined for all vectors of pos times}
\Phi^{\uni}:M\times \Rm_{\geq 0}^{\ell}\ni (x,\unt)\mapsto \varphi^{i_{\ell}}_{t_{\ell}}\circ\ldots\circ \varphi^{i_1}_{t_1}(x)\in M,\quad \text{where}\quad \unt:=(t_1,\ldots,t_{\ell}).
\end{equation}

We shall assume throughout that there exists some $x^{\ast}\in M$ satisfying the following accessibility condition.

\begin{defin}\label{defin:accessible pt}
We say that the point $x^{\ast}$ is accessible if for every open neighbourhood $U\ni x^{\ast}$ and every $x\in M$ there exists $1\leq \ell<\infty$, $\uni\in E^{\ell}$ and $\unt\in \Rm_{\geq 0}^{\ell}$ such that $\Phi^{\uni}(x,\unt)\in U$.
\end{defin}

We define $[V,W]$ to be the Lie bracket of vector fields $V,W$. We then inductively define.
\begin{equation}\label{eq:families of Lie brackets}
\begin{split}
\mathcal{V}_0:=\{v^i:i\in E\},\quad \mathcal{V}_{n+1}:=\{[v,w]:v\in \mathcal{V}_0,\quad w\in \mathcal{V}_n\}\cup \mathcal{V}_n\quad\text{for} \quad 0\leq n<\infty,\\
\mathcal{V}_{\infty}:=\cup_{n\geq 0}\mathcal{V}_n,\quad \mathcal{V}_n(x):=\{V(x):V\in\mathcal{V}_n\}\quad\text{for}\quad x\in E\quad\text{and}\quad 0\leq n\leq \infty.
\end{split}
\end{equation}
We shall consider the following condition on points $x^{\ast}\in M$.
\begin{defin}[Weak H\"ormander condition]\label{defin:weak hormander}
We say that the weak H\"ormander condition is satisfied at $x^{\ast}\in M$ if $\text{span}(\calV_{\infty}(x^{\ast}))=\Rm^d$.
\end{defin}

We shall also consider the following stronger condition. 
\begin{defin}[$1$-H\"ormander condition]\label{defin:one bracket condition}
We say that the $1$-H\"ormander condition is satisfied at $x^{\ast}\in M$ if $\text{span}(\calV_{1}(x^{\ast}))=\Rm^d$.
\end{defin}
We impose throughout the following standing assumption. 
\renewcommand{\theAssumletter}{S}
\begin{Assumletter}\label{assum:accessible Hormander point}
There exists an accessible point $x^{\ast}\in M$ at which the weak H\"ormander condition is satisfied.
\end{Assumletter}

We define $\calQ$ to be the space of rate matrices on $E$ such that all off-diagonal entries are strictly positive (in particular, such matrices are irreducible).
\begin{rmk}
In fact, it is clear from the proofs that our results require only irreducibility, so that the statements of all of our results remain true with $\calQ$ redefined to be the space of irreducible rate matrices. This would require only technical changes to their proofs.
\end{rmk}
We note that for $Q\in\calQ$ and $\lambda>0$, the Markov chain corresponding to $\lambda Q$ is related to the Markov chain corresponding to $Q$ by multiplying the jump rates by $\lambda$, while keeping the same transition probabilities.

It is known (\cite[Theorem 1]{Bakhtin2012} and \cite[Theorem 4.5]{Benaim2015a}, see also \cite[Corollary 6.21, Theorem 6.26 and Remark 6.27]{Benaim2022}) that under Assumption \ref{assum:accessible Hormander point}, PDMP($Q$) has  a unique stationary distribution which is absolutely continuous with respect to Lebesgue measure, for all $Q\in\calQ$. We call this stationary distribution $\pi^Q$. Since it is absolutely continuous with respect to Lebesgue measure, it has a stationary density, which we also refer to as $\pi^Q$ by abuse of notation. Our first result guarantees that the stationary density is lower semicontinuous in general.
\begin{theo}\label{theo:lower semicts stat density}
Assume that Assumption \ref{assum:accessible Hormander point} is satisfied, and that $Q\in\calQ$. Then the stationary density $\pi^Q$ is a $[0,\infty]$-valued lower semi-continuous function.
\end{theo}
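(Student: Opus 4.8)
The plan is to exhibit the stationary density $\pi^Q$ as a supremum (or an increasing limit) of continuous functions, from which lower semicontinuity is immediate. The natural object to work with is the transition semigroup of $PDMP(Q)$: for $t>0$ and a continuous test function, $P_t$ has a smoothing effect in the directions spanned by the iterated Lie brackets, and by stationarity $\pi^Q = \pi^Q P_t$ for every $t$. Concretely, I would fix a point $(x,i)\in \inte(M)\times E$ and a small ball $B_r(x)\times\{i\}$ and write
\[
\pi^Q(B_r(x)\times\{i\}) = \int \Pm_{(y,j)}\big(X_t\in B_r(x),\, I_t=i\big)\,\pi^Q(dy,dj).
\]
The goal is to show that for a suitable (random or deterministic) choice of $t$ depending on $(x,i,r)$, the map $x\mapsto$ (this integral), appropriately normalized by the Lebesgue measure of $B_r(x)$, converges as $r\downarrow 0$ to $\pi^Q(x,i)$ from below along a family of continuous functions. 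Here is where I expect to use stopping times in the spirit announced in the abstract: rather than evolving for a fixed deterministic time, one runs the PDMP until a well-chosen stopping time $\tau$ at which the position has been pushed, via a composition of flows $\Phi^{\uni}$, into a neighbourhood of $x$ with a density that is jointly continuous in the endpoint; the accessibility of $x^{\ast}$ together with the weak Hörmander condition at $x^{\ast}$ (Assumption \ref{assum:accessible Hormander point}) is what guarantees that such a maneuver spreads mass with a continuous density near a full-measure set of target points.

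In more detail, the key steps I would carry out are: (1) For each target point $x\in\inte(M)$, each state $i$, and each small $\delta>0$, construct a bounded continuous functional $F_{x,i,\delta}$ of the path of $PDMP(Q)$ — of the form ``indicator that at some stopping time the process lies in a $\delta$-ball around $x$ in state $i$, weighted by an appropriate density factor'' — such that $\expE_{\pi^Q}[F_{x,i,\delta}]$ is a lower bound for $\pi^Q(x,i)$ (using forward invariance of $M$ and positivity of the off-diagonal rates, so all the compositions of flows used are genuinely charged with positive probability). (2) Show $x\mapsto \expE_{\pi^Q}[F_{x,i,\delta}]$ is continuous; this reduces to continuity in $x$ of the flow maps $\varphi^i_t$ and of finite compositions $\Phi^{\uni}(\cdot,\unt)$, dominated convergence for the expectation, and joint continuity of the relevant Jacobian/density factors — all of which follow from $v^i\in C^{\infty}$ and boundedness. (3) Show $\sup_{\delta>0}\expE_{\pi^Q}[F_{x,i,\delta}] = \pi^Q(x,i)$ for Lebesgue-a.e.\ $x$, and then upgrade this to \emph{all} $x$ by defining $\tilde\pi^Q(x,i):=\sup_{\delta}\expE_{\pi^Q}[F_{x,i,\delta}]$, observing it is a supremum of continuous functions hence lower semicontinuous, and checking $\tilde\pi^Q$ is a version of $\pi^Q$ by integrating against test functions. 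Since a supremum of continuous functions is always lower semicontinuous and $[0,\infty]$-valued, this yields the theorem.

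The main obstacle, and the crux of the argument, is step (3): proving that the lower bounds $\expE_{\pi^Q}[F_{x,i,\delta}]$ actually recover the true density in the limit, rather than undershooting it — i.e.\ that the stopping-time construction captures \emph{all} of the mass that $\pi^Q$ puts near $(x,i)$, not just some of it. This is where one genuinely needs the disintegration of $\pi^Q$ along the ``last maneuver'' before hitting a neighbourhood of $x$, and where the Hörmander condition enters quantitatively: it ensures the flow-composition map $\Phi^{\uni}$ has, for appropriate $\uni$, a surjective differential in the time variables at accessible points, so that pushing $\pi^Q$ forward under $\Phi^{\uni}$ produces an absolutely continuous piece with a continuous density, and summing/integrating these pieces reconstructs $\pi^Q$ itself. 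Controlling the interchange of the supremum over $\delta$ with the expectation, and arguing that no mass is lost in the $r\downarrow 0$ limit (a Lebesgue-differentiation-type statement that must hold at every point, not just a.e.), is the technical heart; everything else is continuity of smooth flows and dominated convergence.
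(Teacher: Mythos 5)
Your overall target is the right one---the paper also proves lower semicontinuity by exhibiting the stationary density as a pointwise non-decreasing limit of continuous functions---but the mechanism you propose has a genuine gap, and in one place points in a direction that the standing hypotheses do not support.

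First, the step you yourself flag as the crux (your step (3): that the lower bounds $\expE_{\pi^Q}[F_{x,i,\delta}]$ exhaust the density rather than undershoot it) is exactly the part that is not resolved by your outline, and it is not a routine Lebesgue-differentiation argument. The paper closes this by an entirely structural device: it builds an a.s.\ finite stopping time $\tau$, lets $\omega$ be the stationary law of the stopped chain $S((x,i),\cdot)=\Pm_{(x,i)}((X_\tau,I_\tau)\in\cdot)$, and uses the identity $\pi^Q\,\expE_\omega[\tau]=\omega G$ (Proposition \ref{prop:relation pi tilde with omega tilde G tilde}), where $G$ is the occupation kernel up to $\tau$. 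Exhaustiveness is then automatic: $G=\sum_{n\ge 0}P^nG_0$ is a telescoping of the occupation measure over time blocks, each $\omega G_N=\sum_{n\le N}\omega P^nG_0$ has a continuous (indeed $C_0^\infty$) density $u_N$, the $u_N$ increase pointwise, and $\omega G_N\to\omega G$ in total variation; monotone convergence then identifies $\lim_N u_N$ as a version of the density, with no pointwise recovery of a pre-existing (only a.e.\ defined) density ever needed. Your plan has no substitute for this decomposition, and without it the supremum of your lower bounds has no reason to equal the density.

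Second, your proposed source of regularity is misplaced. You want to disintegrate $\pi^Q$ "along the last maneuver before hitting a neighbourhood of $x$" and use the H\"ormander condition to make the time-perturbation map $\unt\mapsto\Phi^{\uni}(y,\unt)$ a submersion near the \emph{target} point $x$. But Assumption \ref{assum:accessible Hormander point} only gives the weak H\"ormander condition at the single accessible point $x^\ast$; at a generic $x\in\inte(M)$ no bracket condition is available, so the pieces you produce near $x$ need not be absolutely continuous, let alone have continuous densities. The paper's order of operations is the reverse of yours: use accessibility to drive \emph{every} initial condition into a ball around $x^\ast$, apply the submersion there once to produce a $C_0^\infty$ density (Proposition \ref{prop:general section smooth density propn}), conclude $\omega=\omega S$ is smooth, and only then transport that smooth density to the rest of the space through the occupation kernel, whose blocks preserve smoothness because they are pushforwards under smooth flows over bounded time (Proposition \ref{prop:general section Ck preservation propn}). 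Reorganizing your argument along these lines is what would turn your outline into a proof.
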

\begin{rmk}
The proof of Theorem \ref{theo:lower semicts stat density} does not use that the stationary distribution is absolutely continuous with respect to Lebesgue measure; we instead recover this from the proof of Theorem \ref{theo:lower semicts stat density}.
\end{rmk}

Our next result, which is the main result of this paper, provides conditions guaranteeing that the stationary density belongs to $C^k_0(\inte(M)\times E)$ whenever the jump rate is fast enough, for any $0\leq k<\infty$. We note that the notion of ``fast enough'' is not uniform in $k$, so this does not suffice to establish that the stationary density belongs to $C_0^{\infty}(\inte(M)\times E)$ for fast enough jump rates.

For any $Q\in\calQ$, the corresponding continuous time $E$-valued Markov chain $(I_t)_{0\leq t<\infty}$ has a unique stationary distribution $\varpi_Q$. This then defines the averaged vector field and averaged flow map
\begin{equation}
\bar{v}^{Q}(x):=\sum_{i\in E}\varpi_Q(i)v^i(x),\quad \bar{\varphi}^Q_s(x):=x_s\quad\text{whereby}\quad x_0=x\quad\text{and}\quad \dot{x}_s=\bar{v}^Q(x_s)\quad\text{for}\quad s\in \Rm.
\end{equation}
Since we can approximate $\bar\varphi^Q_t(x)$ arbitrarily well by switching between the flows $\varphi^1,\ldots,\varphi^n$, for all $t\geq 0$, we see that $M$ must be forward invariant under $\bar\varphi^Q$. Therefore the following is a well-defined, non-empty, compact subset of $M$ for any $Q\in\calQ$,
\begin{equation}
K^Q:=\cap_{t\geq 0}\bar{\varphi}^Q_t(M).
\end{equation}

This set is the global attractor of $\varphi^Q$ on $M$, meaning that it is compact, both forwards and backwards invariant, and attracts all points in $M$ uniformly.

We now define $\calQ_0$ to be the following set of $Q\in \calQ$,
\begin{equation}\label{eq:defin set Q0}
\calQ_0:=\{Q\in \calQ:\text{the $1$-H\"ormander condition holds at every $x\in K^Q$}\}.
\end{equation}
Note that $\calQ_0$ is necessarily a cone: if $Q\in\calQ_0$ then $\lambda Q\in\calQ_0$ for all $\lambda>0$. This is due to the fact that $\omega^Q=\omega^{\lambda Q}$, so that they have the same averaged vector field. 

\begin{rmk}
If the $1$-H\"ormander condition holds at every $x\in M$, then $\calQ_0=\calQ$.
\end{rmk}

We are now able to state the main theorem of our paper.
\begin{theo}\label{theo:PDMPs with fast jump rates have Ck versions}
We assume that Assumption \ref{assum:accessible Hormander point} holds, and that $\calQ_0$ is non-empty. Then for every $0\leq k<\infty$ there exists a locally bounded function
\begin{equation}
\Lambda_k:\calQ_0\ra [0,\infty)
\end{equation}
such that $\pi^{\lambda Q}\in C^k_0(\inte(M)\times E)$ whenever $Q\in\calQ_0$ and $\lambda\geq \Lambda_k(Q)$.
\end{theo}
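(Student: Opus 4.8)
The plan is to represent the stationary density probabilistically via an occupation-time / Green's function type formula and then exploit the fast switching to write this density as an integral of pushforwards of smooth test functions along composite flow maps $\Phi^{\uni}$, together with a H\"ormander-type hypoellipticity argument localized near the attractor $K^Q$. Concretely, for $Q\in\calQ_0$ and large $\lambda$, the process $PDMP(\lambda Q)$ rapidly averages, so its orbit is drawn into a neighbourhood of $K^Q$; since the $1$-H\"ormander condition holds at every point of $K^Q$, one bracket $[v^i,v^j](x)$ together with the $v^i(x)$ already spans $\Rm^d$ on a neighbourhood of $K^Q$. The idea is that the noise generated by switching, on the fast time scale, diffuses in all $d$ spatial directions, which is exactly what yields the smoothing. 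I would first set up, using the Markov property and the stationarity equation $\pi^{\lambda Q}=\pi^{\lambda Q}P^{\lambda Q}_t$, an integral representation of $\pi^{\lambda Q}(x,i)$ against a smooth bump function, of the form $\int \expE_{(y,j)}[\,\ldots\,]\,\pi^{\lambda Q}(dy,dj)$ where the inner expectation runs a copy of the process for a bounded time and involves the transition density; the aim is to transfer all regularity onto an explicitly smooth integrand.

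The core technical step I would carry out is a quantitative \emph{integration-by-parts / stopping-time} construction on a single excursion: decompose a typical trajectory over a bounded time window into a controlled sequence of switches $\uni=(i_1,\ldots,i_\ell)$ with holding times $\unt$, so that $X_t=\Phi^{\uni}(X_0,\unt)$, and observe that the holding times $\unt$ are (conditionally) absolutely continuous with smooth density proportional to $\prod e^{-\lambda q_{i_m} t_m}$. Then $\pi^{\lambda Q}$ restricted to a coordinate neighbourhood is a mixture, over $\uni$ and over the randomness up to the start of the window, of pushforward measures $(\Phi^{\uni}(y,\cdot))_\#(\text{smooth density on }\Rm_{\geq0}^\ell)$. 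To get a $C^k$ density for the pushforward one needs the map $\unt\mapsto \Phi^{\uni}(y,\unt)$ to be a submersion onto $\Rm^d$ at the relevant points with enough ``free'' directions: this is where the $1$-H\"ormander condition enters. The derivative $\partial_{t_m}\Phi^{\uni}(y,\unt)$ equals $(D\varphi^{i_\ell}_{t_\ell}\circ\cdots)\,v^{i_m}(\cdot)$, i.e. flow-transported vector fields, and by taking pairs of nearby switches one manufactures bracket directions $[v^i,v^j]$; since these span $\Rm^d$ near $K^Q$, one can choose $\ell$ large enough (depending on $k$, not on $\lambda$) and a subset of $d$ holding-time coordinates whose Jacobian is bounded below, and then integrate by parts $k$ times in the remaining holding-time variables against the smooth exponential density, losing only factors of $\lambda$. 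Choosing $\Lambda_k(Q)$ to absorb these $\lambda$-losses — using that the return time to a neighbourhood of $K^Q$ is $O(1)$ uniformly, hence many switches occur, and that the density of holding times is $\lambda$-concentrated — gives the claimed bound, and local boundedness of $\Lambda_k$ in $Q$ follows because all the geometric quantities (spanning constants on $K^Q$, attractor contraction rates) vary continuously in $Q$.

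The decay estimates near $\partial M$ needed for the $C_0^k$ (rather than merely $C^k$) conclusion I would handle separately: on compact subsets of $\inte(M)$ one has uniform ellipticity-type lower bounds on the relevant Jacobians from the $1$-H\"ormander condition plus accessibility, whereas near the boundary the same representation shows the density and its derivatives up to order $k$ are controlled by the (small) mass the stationary measure places near $\partial M$ together with uniformly bounded integrands, so they tend to $0$; alternatively, since $M$ is forward invariant and $K^Q\subseteq\inte(M)$ is the global attractor, the stationary measure is actually supported on $K^Q$, trivializing the boundary behaviour. I expect the main obstacle to be the quantitative bookkeeping in the integration-by-parts step: making the choice of ``$d$ good coordinates plus $k$ directions to integrate by parts in'' uniform over starting points $y$ in a neighbourhood of $K^Q$ and over the (random, unbounded) number of switches, while tracking the exact power of $\lambda$ produced, so that a \emph{finite} threshold $\Lambda_k(Q)$ suffices — this is the place where the ``fast enough is not uniform in $k$'' phenomenon is forced, and getting the dependence on $k$ explicit enough to close the estimate is the delicate part. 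A secondary difficulty is justifying the interchange of differentiation and the (infinite) sum/integral over switch sequences, which requires the absolute convergence of the differentiated series, again uniformly on compacts of $\inte(M)$.
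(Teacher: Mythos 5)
Your high-level framework is the right one and matches the paper's: represent $\pi^{\lambda Q}$ via the occupation measure of a stopping time (Proposition \ref{prop:relation pi tilde with omega tilde G tilde}), use fast averaging to bring the process near $K^Q$, and use the $1$-H\"ormander condition to make $\unt\mapsto\Phi^{\uni}(x,\unt)$ a submersion so that pushforwards of smooth densities in the holding times are smooth. But your central mechanism for producing $C^k$ regularity --- integrating by parts $k$ times in the holding-time variables against the conditional density $\prod \lambda q_{i_m}e^{-\lambda q_{i_m}t_m}$ --- has a gap that I do not see how to close, and it points in the wrong direction. Each integration by parts against that density costs a factor of $\lambda$, and the Jacobian of the submersion in the bracket directions itself degenerates like the holding times, i.e.\ like $\lambda^{-1}$ (this is visible in the paper's computation $L\Phi^{\xi}(x,\unt)=t([v^1,v^2](x)+o(1))$). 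So the $C^k$ bounds you would extract from a single excursion \emph{blow up} as $\lambda\to\infty$, whereas the theorem requires the conclusion precisely for large $\lambda$; choosing $\Lambda_k(Q)$ larger cannot ``absorb'' losses that grow with $\lambda$. You gesture at compensating via the $O(1)$ return time and the concentration of holding times, but no cancellation mechanism is identified, and I believe none exists along this route.

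The paper avoids this entirely by splitting the regularity into two steps with different mechanisms. First, the randomized stopping time carries a fixed smooth bump $p_{\xi}\in C_c^{\infty}(U)$ in the holding-time variables, so the exit law $\omega$ is $C_0^{\infty}$ by a pure pushforward-under-submersion argument (Lemmas \ref{lem:pushforward by smooth submersions lemma}--\ref{lem:pushforward by smooth submersions parametrised lemma}); no integration by parts against the exponential density is ever performed, and the only role of large $\lambda$ here is to keep the probability of landing in $U$ bounded below, which is exactly what the scaling condition \eqref{eq:lebesgue measure of rescaled open set intersect with simplex has pve lim inf} guarantees (a point your proposal does not address: as $\lambda\to\infty$ the holding times concentrate near $0$, where the submersion degenerates). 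Second, the Green kernel $G$ transports this $C^{\infty}$ density by the \emph{reversed flow}, $\rho^{\theta}_t=\det(D\phi^{\theta}_t)\,\rho\circ\phi^{\theta}_t$, whose $C^k$ norm grows like $e^{M_k t}$; hence $C^k$ regularity of $\omega G$ needs only $\expE[C_k^{\tau}]<\infty$ (Proposition \ref{prop:general section Ck preservation propn}), and large $\lambda$ enters solely through making $\Pm(\tau\geq nH)$ decay geometrically fast enough to beat $C_k^{nH}$. This is also where the non-uniformity in $k$ comes from ($C_k$ grows with $k$), not from integration-by-parts bookkeeping. Finally, your fallback remark that the stationary measure is supported on $K^Q$ is false: $K^Q$ is the attractor of the \emph{averaged} flow, while $\pi^{\lambda Q}$ is absolutely continuous and generally charges a much larger set (cf.\ Example \ref{exam:switching between rotating and contracting vector fields}); the $C_0^k$ boundary behaviour is instead obtained because the reversed-flow transport preserves the class $C_0^{\infty}(\inte(M))$ and one applies dominated convergence.
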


We conjecture the following.
\begin{conj}\label{conj:weak Hormander sufficient}
In the assumption of Theorem \ref{theo:PDMPs with fast jump rates have Ck versions}, the $1$-H\"ormander condition can be replaced with the weak H\"ormander condition. 
\end{conj}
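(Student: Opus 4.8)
The plan is to revisit the proof of Theorem~\ref{theo:PDMPs with fast jump rates have Ck versions}, isolate the single place where the $1$-H\"ormander condition is used, and replace the one-bracket input there with an iterated-bracket input. In the stopping-time argument underlying Theorem~\ref{theo:PDMPs with fast jump rates have Ck versions} the $1$-H\"ormander hypothesis on $K^Q$ enters (I expect) through a ``controlled spreading'' statement of the following shape: for each $x\in K^Q$ there is a finite switching word $\uni$ and a time vector $\unt$ interior to $\Rm_{\geq 0}^{\ell}$ such that $\Phi^{\uni}(x,\cdot)$ is a submersion at $\unt$, together with quantitative lower bounds (on the least singular value of $D_{\unt}\Phi^{\uni}$, and on the norms of its higher derivatives) that survive the rescaling $Q\mapsto \lambda Q$ with only \emph{polynomial} loss in $\lambda$, uniformly over $x\in K^Q$. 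With $1$-brackets this least singular value is of order the burst length in the bracket directions, i.e.\ of order $\lambda^{-1}$ after rescaling, hence only polynomially small; this is what makes $\Lambda_k(Q)<\infty$. The conjecture amounts to showing that the same mechanism tolerates bracket directions of higher depth.

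The crucial reduction making this plausible is a compactness observation. For each $N$ the set $U_N:=\{x:\Span(\calV_N(x))=\Rm^d\}$ is open, and the weak H\"ormander condition holding at every point of the compact set $K^Q$ is equivalent to $K^Q\subseteq U_N$ for some finite $N=N(Q)$ (an increasing open cover of a compact set). Thus, although the bracket depth need not be uniformly bounded on all of $M$, it \emph{is} uniformly bounded on $K^Q$, and even on a neighbourhood of $K^Q$. Moreover $Q\mapsto \bar v^Q$ is continuous on $\calQ$ and the global attractor $Q\mapsto K^Q$ depends upper semicontinuously on $Q$, so $N(\cdot)$ can be taken locally constant on $\{Q:\text{weak H\"ormander holds on }K^Q\}$; all the estimates below are then locally uniform in $Q$, which is what keeps $\Lambda_k$ locally bounded.

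With a uniform depth $N$ on a neighbourhood of $K^Q$ in hand, the qualitative existence of the spreading word follows from orbit-theorem / Chow--Rashevskii theory: since $\Span(\calV_N(x))=\Rm^d$, the family $\{v^i\}_{i\in E}$ has $d$-dimensional orbits near $x$, so for some $\ell\leq d$, some $\uni\in E^{\ell}$ and some $\unt$ interior to $\Rm_{\geq 0}^{\ell}$ the endpoint map $\Phi^{\uni}(x,\cdot)$ is a submersion at $\unt$ (the positivity constraint $\unt\in\Rm_{\geq 0}^{\ell}$ is handled as in Krener's theorem). The work is to make this quantitative and compatible with the fast-switching scaling: realising a depth-$r$ bracket direction through a burst of switches of total microscopic duration of order $r/\lambda$ produces spreading of order $\lambda^{-r}$ in that direction, with the higher $\unt$-derivatives of $\Phi^{\uni}$ bounded uniformly because the burst is short and $K^Q$ compact. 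One then feeds the resulting bound ``least singular value $\gtrsim \lambda^{-c(N)}$'' into the remainder of the proof of Theorem~\ref{theo:PDMPs with fast jump rates have Ck versions} essentially verbatim; the threshold $\Lambda_k(Q)$ gets multiplied by a power of $\lambda$ depending on $N(Q)$ and $k$, but stays finite and locally bounded.

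The main obstacle, and the reason this remains a conjecture, is the bookkeeping needed to certify that \emph{every} $\lambda$-dependent factor in the proof of Theorem~\ref{theo:PDMPs with fast jump rates have Ck versions} is at worst polynomial, so that the polynomially small spreading constant $\lambda^{-c(N)}$ coming from depth-$N$ brackets is not overwhelmed: one must control, uniformly in $\lambda$, the number of relevant switching words, the higher $\unt$-derivatives of $\Phi^{\uni}$ along compositions whose length grows like $\lambda$, and the fact that the intermediate points of the spreading bursts leave $K^Q$ by an amount of order the burst length, hence must be kept inside the neighbourhood of $K^Q$ on which the uniform depth $N$ is valid. A secondary nuisance is that, with only nonnegative switching times available, the elementary commutator-of-flows realisation of deep bracket directions is not clean and must be replaced by a refined Sussmann-type construction, which is where the bulk of the estimates live. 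As supporting evidence that the weak H\"ormander condition is nonetheless the correct hypothesis: in the formal fast-switching diffusion approximation the effective equation has drift $\bar v^Q$ and diffusion with range $\Span\{v^i-v^j:i,j\in E\}$, whose H\"ormander Lie algebra at $x$ is exactly $\Span(\calV_{\infty}(x))$, so the limiting operator is hypoelliptic precisely under the weak H\"ormander condition.
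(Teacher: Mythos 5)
What you have written is a programme, not a proof, and you say so yourself (``the main obstacle, and the reason this remains a conjecture, is the bookkeeping\dots''). That is consistent with the paper: the statement is Conjecture \ref{conj:weak Hormander sufficient}, which the paper does not prove either; its only content on the matter is Remark \ref{rmk:generalisation of bracket condition}, which reduces the conjecture to showing that the weak H\"ormander condition on $K^Q$ yields Part \ref{enum:submersion on open set for correct sequence of states} of Proposition \ref{prop:Quantities for Ck density thm}. Your opening reduction is essentially the same as the paper's: you correctly locate the unique place where the $1$-H\"ormander hypothesis enters (the construction of a switching word $\xi$ along which $\unt\mapsto\Phi^{\xi}(x,\unt)$ is a submersion, uniformly over a neighbourhood of $K^{Q}$), and your compactness remark that the bracket depth is uniformly bounded on a neighbourhood of $K^Q$ is correct and would indeed be the first step.

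The genuine gap, beyond the admitted incompleteness, is that your quantitative framing does not match what the argument of Theorem \ref{theo:PDMPs with fast jump rates have Ck versions} actually needs, so even the ``feed it into the rest of the proof verbatim'' step is not available as stated. The paper's proof never tracks a least singular value of $D_{\unt}\Phi^{\xi}$, and there is no budget of ``polynomial loss in $\lambda$'' to be exploited: Proposition \ref{prop:general section smooth density propn} uses the submersion property only qualitatively, and the sole quantitative input as $Q\mapsto\lambda Q$ is probabilistic, namely that the good event defining the stopping time occurs before time $H$ with probability close to $1$, so that \eqref{eq:bound required on good stopping time for preservation of Ck by G} holds. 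This is exactly why Part \ref{enum:submersion on open set for correct sequence of states} demands, in addition to the submersion, the scale-robustness property \eqref{eq:lebesgue measure of rescaled open set intersect with simplex has pve lim inf}: the set $U$ of good occupation-time vectors must retain positive Lebesgue density under the rescaling $\unt\mapsto c\,\unt$ as $c\to 0$, because fast switching shrinks the occupation times like $1/\lambda$. In the $1$-bracket case the paper achieves this through the induction in Condition \ref{cond:condition for induction of submersion propn}, where the cone-like sets $U_{\gamma}$ (e.g.\ $\{(s,t,u):s,u\leq\gamma t\}$ for $\xi=(1,2,1)$) satisfy \eqref{eq:positive Lebesgue lim inf for induction} and the errors in \eqref{eq:expression for diff operator applied to Phixi for induction proof} are $o(1)$ relative to the positive factors $f_r$. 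The unresolved difficulty for deeper brackets is precisely to produce, with only nonnegative switching times, analogous sets of time-vectors on which the iterated-commutator expansion has leading term the deep bracket and on which the error terms stay subordinate, \emph{while} preserving the positive-density-under-scaling property; the nested time regimes that make the expansion work tend to destroy exactly that property. Your proposal does not address this, so it neither proves the conjecture nor supplies an alternative route around the paper's stated obstruction.
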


We shall explain in Remark \ref{rmk:generalisation of bracket condition} what one would need to prove to establish this conjecture.

Whereas the previous result established regularity of the stationary density whenever the jump rate is fast enough, the next result will guarantee that the stationary density is infinite on certain subsets of $M\times E$ whenever the jump rate is slow enough. 

\begin{cond}\label{cond:sets giving rise to unbounded densities}
The pair $(\Gamma,i)$ is such that:
\begin{enumerate}
\item
$\Gamma$ is a closed subset of $\inte(M)$ and $i\in E$.
\item
The set $\Gamma$ is backwards invariant under $\varphi^i$: if $x\in \Gamma$ then $\varphi^i_{-t}(x)\in \Gamma$ for all $t\geq 0$.
\item
There exists an open neighbourhood $N\supseteq \Gamma$ such that every point $x\in N$ is accessible.
\item
Every point $x\in \Gamma$ satisfies the weak H\"ormander condition.
\item
We have that
\begin{equation}\label{eq:uniformly negative divergence on sets gamma}
\sup_{x\in \Gamma}\text{tr}(D v^i(x))<0.
\end{equation}
\end{enumerate}
\end{cond}
Note that it is not atypical for there to exist sets satisfying Condition \ref{cond:sets giving rise to unbounded densities}. For instance, if $x\in \inte(M)$ is such that every point in an open neighbourhood of $x$ is accessible, the weak H\"ormander condition is satisfied at $x$, and the divergence of $v^i$ is negative at $x$, then $\Gamma:=\{(x,i)\}$ satisfies Condition \ref{cond:sets giving rise to unbounded densities} (see Example \ref{exam:switching between rotating and contracting vector fields}, for instance).

Given $(\Gamma,i)$ satisfying Condition \ref{cond:sets giving rise to unbounded densities}, we define
\begin{equation}\label{eq:defn of R gamma i}
R(\Gamma,i):=\inf_{x\in \Gamma}(-\text{tr}(Dv^i(x)))>0.
\end{equation}

Note that the inequality in \eqref{eq:defn of R gamma i} comes from \eqref{eq:uniformly negative divergence on sets gamma}. We recall that for $Q\in\calQ$, $Q_{ii}=-\sum_{j\neq i}Q_{ij}$, so that $-Q_{ii}>0$ is the total rate at which the corresponding Markov chain jumps from state $i$ to another state (which is positive by irreducibility). If Condition \ref{cond:sets giving rise to unbounded densities} is satisfied by some $(\Gamma,i)$, one may then consider those $Q\in \calQ$ satisfying
\begin{equation}
-Q_{ii}\leq R(\Gamma,i).
\label{eq:inequality for unbounded density}
\end{equation}

\begin{theo}\label{theo:PDMPs with slow jump rates have unbounded density}
We assume that \ref{assum:accessible Hormander point} is satisfied. We suppose that $(\Gamma,i)$ satisfies Condition \ref{cond:sets giving rise to unbounded densities} and that $Q\in\calQ$ satisfies \eqref{eq:inequality for unbounded density}. Then every lower semi-continuous version of $\pi^Q$ satisfies
\begin{equation}
\pi^Q\equiv +\infty\quad\text{on}\quad \Gamma.
\end{equation}
\end{theo}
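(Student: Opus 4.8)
The plan is to exploit the structure of $\pi^Q$ as a stationary density together with the contraction of Lebesgue volume along the flow $\varphi^i$. The starting point is the elementary observation that, since $(\Gamma,i)$ is backwards invariant under $\varphi^i$, a trajectory that is currently in state $i$ and sitting near $\Gamma$ has, for at least a short time window before the present, been flowing under $v^i$ from a region that is \emph{exponentially smaller} in Lebesgue measure (because $\mathrm{tr}(Dv^i)\le -R(\Gamma,i)$ on $\Gamma$, hence $\le -R(\Gamma,i)+\varepsilon$ on a neighbourhood). Concretely, I would first fix $x_0\in\Gamma$, a small ball $B=B(x_0,r)$, and for each $\tau>0$ consider the ``funnel'' of states $(x,i)$ which were in state $i$ throughout $[-\tau,0]$ and whose backward $\varphi^i$-trajectory over that time lies in a neighbourhood of $\Gamma$ where the divergence bound holds. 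The Jacobian of $\varphi^i_{-\tau}$ has determinant $\exp(\int_0^\tau -\mathrm{tr}(Dv^i)\,ds)\gtrsim e^{(R-\varepsilon)\tau}$ on that neighbourhood, so $\varphi^i_{-\tau}(B)$ has volume $\gtrsim e^{(R-\varepsilon)\tau}\,\mathrm{Leb}(B)$ provided $\tau$ is not too large (to keep the trajectory inside the good neighbourhood; this is where backward invariance of $\Gamma$ is used, giving room for small $r$).

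The second ingredient is a lower bound, uniform in $x\in B$, on the probability that $PDMP(Q)$ started from $(\varphi^i_{-\tau}(x),i)$ stays in state $i$ for time exactly $\tau$ and thus arrives at $(x,i)$: this probability is $e^{Q_{ii}\tau}=e^{-(-Q_{ii})\tau}$. Now invoke stationarity: writing $\pi^Q$ as its own pushforward under the time-$\tau$ dynamics, for any test set $A\subseteq B$,
\[
\pi^Q(A\times\{i\})\ \ge\ \int_{\varphi^i_{-\tau}(A)} e^{Q_{ii}\tau}\,\pi^Q(dy\times\{i\}) .
\]
Iterating / integrating against a nonnegative weight and using the volume expansion, together with $-Q_{ii}\le R=R(\Gamma,i)$, one finds that the mass $\pi^Q$ assigns to a neighbourhood of $\Gamma$ in state $i$ controls, from below, the mass assigned to an exponentially larger preimage, with multiplicative factor $e^{(R-\varepsilon)\tau}e^{Q_{ii}\tau}\ge e^{(R-\varepsilon)\tau}e^{-R\tau}=e^{-\varepsilon\tau}$, which is only polynomially small; meanwhile the preimages, being disjoint in a suitable sense (trajectories of distinct backward times), pile up total mass $\to\infty$ unless $\pi^Q$ is already concentrated. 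Running this dichotomy carefully, and localising at a single $x_0$, forces the density to be unbounded near $x_0$; combined with lower semicontinuity (Theorem \ref{theo:lower semicts stat density}) and the fact that the argument applies at every $x_0\in\Gamma$ (using accessibility from the neighbourhood $N$ and the weak H\"ormander condition to ensure $\pi^Q$ actually charges every such neighbourhood — i.e.\ that $\pi^Q>0$ near $\Gamma$, so the lower bounds are not vacuous), one concludes $\pi^Q\equiv+\infty$ on $\Gamma$.

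The cleanest way to package the above is via the stopping-time representation advertised in the abstract: let $T$ be the ``time since the last jump into the current state'', run the stationary chain, and condition on $\{I_0=i\}$ and on $T$; the disintegration of $\pi^Q$ along backward $\varphi^i$-flow lines, weighted by the density of $T$ (which near $0$ behaves like the exponential holding-time density with rate $-Q_{ii}$), produces exactly the inequality $e^{-(-Q_{ii})\tau}\cdot(\text{Jacobian}^{-1})\le(\text{density ratio})$, and the condition $-Q_{ii}\le R(\Gamma,i)$ is precisely what makes the Jacobian win.

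The step I expect to be the main obstacle is making the ``funnel'' argument quantitatively honest: one must (i) choose the neighbourhood of $\Gamma$ and the radius $r$ so that backward $\varphi^i$-trajectories starting in $B$ remain in the region where $\mathrm{tr}(Dv^i)\le -R+\varepsilon$ for a time window long enough that $e^{-\varepsilon\tau}$-type losses still integrate to $+\infty$, which requires knowing $\Gamma$ is backwards invariant (so some uniform escape time is available) and controlling how $r\to0$; and (ii) handle the other states $j\ne i$ and the jumps \emph{into} $i$ — one needs that the stationary mass entering state $i$ near the relevant preimages is itself bounded below, which is where one must combine accessibility, the weak H\"ormander condition, and a Harnack-type or support argument (of the kind underlying Theorem \ref{theo:lower semicts stat density}) to guarantee $\pi^Q$ is strictly positive there. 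Getting these two localisations to be compatible — small enough $r$ for the Jacobian estimate, yet a genuine positive lower bound on incoming mass — is the delicate part; everything else is bookkeeping with the stationarity identity.
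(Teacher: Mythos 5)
Your proposal identifies the right mechanism --- the competition between the no-jump probability $e^{Q_{ii}s}$ and the volume expansion of the backward flow $\varphi^i_{-s}$, integrated over the holding time --- and this is indeed the engine of the paper's proof. But as written there are two genuine gaps.

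First, the divergence argument does not close. You work on a neighbourhood of $\Gamma$ where $\mathrm{tr}(Dv^i)\leq -R+\varepsilon$ and arrive at a per-time factor $e^{-\varepsilon\tau}$, which you call ``only polynomially small''; it is exponentially small, and $\int_0^\infty e^{-\varepsilon\tau}\,d\tau<\infty$, so the contributions you are summing converge and the claimed blow-up does not follow. The ``dichotomy'' you invoke to rescue this is not carried out. The fix is to evaluate the candidate density at points $y\in\Gamma$ itself: by backward invariance, $\varphi^i_{-s}(y)\in\Gamma$ for \emph{all} $s\geq 0$, so along the entire backward orbit one has exactly $-\mathrm{tr}(Dv^i)\geq R(\Gamma,i)$, hence $\det(D\varphi^i_{-s}(y))\geq e^{Rs}$ with no $\varepsilon$ loss, and the integrand $e^{Q_{ii}s}\det(D\varphi^i_{-s}(y))\,\phi(\varphi^i_{-s}(y))\geq e^{(Q_{ii}+R)s}\inf_\Gamma\phi\geq\inf_\Gamma\phi$ by \eqref{eq:inequality for unbounded density}; integrating $s$ over $[0,t]$ gives a lower bound $t\inf_\Gamma\phi\to\infty$. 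Your use of backward invariance only ``to give room for small $r$'' does not achieve this: for $y$ near but not on $\Gamma$ the backward orbit may leave any fixed neighbourhood, and even if it stays the $\varepsilon$ loss is fatal. Second, the lower bound $\inf_\Gamma\phi>0$ on the incoming mass is exactly the point you defer to ``a Harnack-type or support argument'' without constructing it. The paper builds it concretely: using the weak H\"ormander condition at $x\in\Gamma$ applied to the \emph{reversed} vector fields, one finds $y_x$ with $\unt\mapsto\Phi^{\xi^x}(y_x,\unt)$ a submersion, combines this with accessibility of the neighbourhood $N\supseteq\Gamma$ and Proposition \ref{prop:general section smooth density propn} to produce a stopping time $\tau$ whose exit kernel $S$ satisfies $\mu S(dx\times\{i\})\geq\phi(x)\,dx$ for a continuous $\phi>0$ on $\Gamma$, uniformly in $\mu$; Doeblin then gives a stationary $\omega\geq\phi$ for $S$, and Proposition \ref{prop:relation pi tilde with omega tilde G tilde} yields $\expE_\omega[\tau]\,\pi^Q=\omega G\geq\phi G_t$, where $G_t$ is the occupation kernel up to $\tau_0\wedge t$ with $\tau_0$ the first jump time --- the rigorous version of your ``time since the last jump'' disintegration. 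Finally, one needs the small observation that each $\phi G_t$ has a \emph{continuous} density, so that a lower semi-continuous version of $\pi^Q$ dominates each of them everywhere, not merely a.e., and hence equals $+\infty$ on $\Gamma$.
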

In one dimension, \cite[Section 3.2]{Bakhtin2015} characterised when $\pi^Q$ has a singularity. Their characterisation agrees in this setting with \eqref{eq:defn of R gamma i}, so \eqref{eq:defn of R gamma i} is sharp in one dimension.

\begin{example}\label{exam:switching between rotating and contracting vector fields}
We take $d=2$ and define
\[
\begin{split}
A=\begin{pmatrix}
-1 & 1\\
-1 & -1
\end{pmatrix},\quad e_0:=\begin{pmatrix}
1\\
0
\end{pmatrix}, \quad v(x)=Ax,\quad w(x)=-(x-e_0),\\ M=\overline{\text{conv}}(\{e^{-\theta A}e_0:0\leq \theta\leq 2\pi\}).
\end{split}
\]
We see that $M$ is forwards invariant under the vector fields $v$ and $w$, which have Lie bracket 
\[
[v,w](x)=(Dw)v-(Dv)w=(-\text{Id})Ax-A(-(x-e_0))=-Ae_0. 
\]
The $1$-H\"ormander condition is satisfied everywhere on $M$ except at $x=e_0$, which cannot be contained in $K^Q$ for any $Q\in \calQ$. It follows that Assumption \ref{assum:accessible Hormander point} is satisfied and that $\calQ_0=\calQ$. Theorems \ref{theo:lower semicts stat density} and \ref{theo:PDMPs with fast jump rates have Ck versions} then imply that $\pi^Q$ is lower semi-continuous for every $Q\in \calQ$, and that for all $0\leq k<\infty$ there exists $\Lambda_k:\calQ\ra [0,\infty)$ locally bounded such that $\pi^{\lambda Q}\in C_0^k(\inte(M)\times E)$ whenever $Q\in\calQ$ and $\lambda \geq \Lambda_k(Q)$. Moreover, identifying $v$ with $i=1$ and $w$ with $i=2$, we may readily see that Condition \ref{cond:sets giving rise to unbounded densities} is satisfied by $(\{0\},1)$ (i.e. the set $\Gamma=\{0\}$ and the state $i=1$ corresponding to $v$), with $R(\{0\},1)=-\text{tr}(A)=2$. It follows that $\pi^Q((0,1))=+\infty$ whenever $Q\in\calQ$ and $-Q_{11}\leq 2$. 
\end{example}

In the survey article \cite{Malrieu2014}, the author poses the question \cite[Open Question 4]{Malrieu2014} (referring to the same PDMPs considered in this paper), \textit{``what can be said about the smoothness of the invariant density of such processes?''} The author provides an example consisting of two linear vector fields in two dimensions, \cite[Example 4.7]{Malrieu2014}, for which: 

\textit{``Thanks to the precise estimates in [3] [\cite{Bakhtin2015} in our bibliography], one can prove the following fact. If
$\lambda_0$ [$-Q_{11}$ in our notation] is small enough then, as for one-dimensional example, the density of the
invariant measure blows up at the origin.''}
\newline In precisely the same manner as in Example \ref{exam:switching between rotating and contracting vector fields}, the reader may readily verify that all of our results apply to this example.

\section{Overview of the Proofs}\label{section:overview of proofs}
The proofs of theorems \ref{theo:lower semicts stat density}, \ref{theo:PDMPs with fast jump rates have Ck versions} and \ref{theo:PDMPs with slow jump rates have unbounded density} will employ similar ideas. In particular, they all hinge on the following folklore proposition. We will provide for completeness a proof of this proposition, largely based on the proof found in \cite[Theorem 6.26]{Benaim2022}.

\begin{prop}\label{prop:relation pi tilde with omega tilde G tilde}
We suppose that $\tau$ is a stopping time, which we assume to be finite in the sense that $\Pm_{(x,i)}(\tau<\infty)=1$ for all initial conditions $(x,i)$. We may therefore define the following kernels,
\begin{equation}
S((x,i),\cdot):=\Pm_{(x,i)}((X_{\tau},I_{\tau})\in \cdot)\quad\text{and}\quad G((x,i),\cdot):=\expE_{(x,i)}\Big[\int_0^{\tau}\delta_{(X_s,I_s)}(\cdot)ds\Big],
\end{equation}
the former being a Markov kernel. 

We assume in addition that $\omega $ is a stationary distribution of $S$. Then the measure $\omega G$ is stationary for $((X_t,I_t))_{0\leq t<\infty}$. In particular, if we also have that $\expE_{\omega}[\tau]<\infty$, then the probability measure $\pi$ defined by the relation
\begin{equation}
\pi:=\frac{\omega G}{ \expE_{\omega}[\tau]}
\end{equation}
is stationary for $((X_t,I_t))_{0\leq t<\infty}$.
\end{prop}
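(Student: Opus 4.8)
The plan is to verify stationarity of $\omega G$ directly by testing against bounded measurable functions $f$ on $M\times E$ and using the strong Markov property at $\tau$ together with the fact that $\omega$ is $S$-stationary. Write $P_t$ for the transition semigroup of $(X_t,I_t)$. The key identity I would aim to establish is that for every bounded measurable $f$,
\begin{equation}\label{eq:key-identity-proposal}
\int P_t f\, d(\omega G) = \int f\, d(\omega G)\quad\text{for all }t\geq 0,
\end{equation}
which says precisely that $\omega G$ is stationary. Equivalently, differentiating (or working with the resolvent), it suffices to show $\omega G$ is infinitesimally invariant, but I would prefer the cleaner semigroup route since $\tau$ is only assumed a.s.\ finite.

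First I would unwind the definition: for bounded measurable $f$,
\begin{equation}
\int f \, d(\omega G) = \int \expE_{(x,i)}\Big[\int_0^{\tau} f(X_s,I_s)\,ds\Big]\,\omega(d(x,i)).
\end{equation}
Now fix $t\geq 0$ and consider $\int \expE_{(x,i)}\big[\int_0^{\tau} P_t f(X_s,I_s)\,ds\big]\,\omega(d(x,i))$. Using the Markov property, $P_t f(X_s,I_s) = \expE[f(X_{s+t},I_{s+t})\mid \mathcal{F}_s]$, so after taking expectations the inner integral becomes $\expE_{(x,i)}\big[\int_0^\tau f(X_{s+t},I_{s+t})\,ds\big] = \expE_{(x,i)}\big[\int_t^{\tau+t} f(X_u,I_u)\,du\big]$. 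The difference between this and $\expE_{(x,i)}\big[\int_0^\tau f(X_u,I_u)\,du\big]$ telescopes to
\begin{equation}
\expE_{(x,i)}\Big[\int_\tau^{\tau+t} f(X_u,I_u)\,du\Big] - \expE_{(x,i)}\Big[\int_0^{t} f(X_u,I_u)\,du\Big].
\end{equation}
For the first term I would apply the strong Markov property at $\tau$: conditionally on $\mathcal{F}_\tau$, $(X_{\tau+\cdot},I_{\tau+\cdot})$ is a copy of the PDMP started from $(X_\tau,I_\tau)$, so $\expE_{(x,i)}\big[\int_\tau^{\tau+t} f(X_u,I_u)\,du\big] = \expE_{(x,i)}\big[h(X_\tau,I_\tau)\big]$ where $h(y,j):=\expE_{(y,j)}\big[\int_0^t f(X_u,I_u)\,du\big]$. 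Integrating against $\omega$ and using that $\omega S = \omega$ (i.e. $\int \expE_{(x,i)}[g(X_\tau,I_\tau)]\,\omega(d(x,i)) = \int g\,d\omega$ for bounded $g$, applied to $g=h$), this equals $\int h\,d\omega = \int \expE_{(y,j)}\big[\int_0^t f(X_u,I_u)\,du\big]\,\omega(d(y,j))$, which is exactly the integral against $\omega$ of the second term. Hence the two contributions cancel, giving \eqref{eq:key-identity-proposal}. The final ``in particular'' statement is then immediate: if $\expE_\omega[\tau] = \omega G(M\times E) < \infty$ then $\pi = \omega G / \expE_\omega[\tau]$ is a stationary probability measure.

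The main technical obstacle is justifying the measurability and finiteness needed to manipulate these (a priori possibly infinite) quantities — in particular, that $h$ is bounded measurable (it is, since $\norm{h}_\infty \leq t\norm{f}_\infty$ and measurability follows from the Markov kernel structure), that Fubini/Tonelli applies to interchange $\expE_{(x,i)}$, $\int_0^\tau ds$, and the $\omega$-integral, and that the strong Markov property may be invoked at the a.s.-finite stopping time $\tau$. One must also be slightly careful that $\omega G$ could be an infinite measure, so the cancellation argument should first be run for nonnegative $f$ via Tonelli (where everything is well-defined in $[0,\infty]$), and the telescoping difference should be organized so that one never subtracts two infinite quantities — e.g. by bounding $|f|\leq \norm{f}_\infty$ and noting the subtracted integrals are each at most $t\norm{f}_\infty \cdot (\text{finite stuff})$ once one knows $\omega S=\omega$ propagates finiteness appropriately, or simply by proving invariance first for $f$ with $\int f\,d(\omega G)<\infty$. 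These are routine but need to be stated carefully; I expect the write-up to mirror \cite[Theorem 6.26]{Benaim2022} closely here.
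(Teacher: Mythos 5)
Your proposal is correct and follows essentially the same route as the paper: unwind $\int P_tf\,d(\omega G)$ via the Markov property to get $\expE_\omega\big[\int_t^{t+\tau}f(Z_u)\,du\big]$, telescope the difference into $\expE_\omega\big[\int_\tau^{\tau+t}\big]-\expE_\omega\big[\int_0^t\big]$, and cancel these two terms by the strong Markov property at $\tau$ combined with $\omega S=\omega$ (the paper phrases this as $(Z_{\tau+t})_{t\ge0}$ having the same $\Pm_\omega$-law as $(Z_t)_{t\ge0}$, which is exactly your computation with $h$). Your added care about Tonelli and about not subtracting infinite quantities is a refinement the paper omits, not a departure from its argument.
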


The strategy for all of our proofs will be to construct a stopping time $\tau$ such that $\omega$ has a density belonging to some appropriate class of functions, and such that $G$ maps densities in this class to the target class for appropriate $Q$. In particular our strategies will be as follows: 
\begin{enumerate}
\item
To prove Theorem \ref{theo:lower semicts stat density}, we will construct $\tau$ such that $\omega$ always has a $C^{\infty}_0(\inte(M)\times E)$, and such that $G$ maps $C^{\infty}_0(\inte(M)\times E)$ densities to lower semi-continuous densities.
\item
To prove Theorem \ref{theo:PDMPs with fast jump rates have Ck versions}, we will construct $\tau$ such that $\omega$ always has a $C_0^{\infty}(\inte(M)\times E)$ density, and such that $G$ maps $C_0^{\infty}(\inte(M)\times E)$ densities to $C^k_0(\inte(M)\times E)$ densities whenever the jump rate is sufficiently fast.
\item
To prove Theorem \ref{theo:PDMPs with slow jump rates have unbounded density}, we will construct $\tau$ such that $\omega$ is bounded from below by a density which is continuous and strictly positive on $\Gamma$, and such that $G$ maps densities which are continuous and strictly positive on $\Gamma$ to lower semicontinuous densities which are infinite on $\Gamma$ whenever the jump rate satisfies \eqref{eq:inequality for unbounded density}.
\end{enumerate}
Whereas our results can be formulated as purely analytic statements, our proof is probabilistic and cannot be reformulated in analytic terms. This naturally leads to the question of whether there exist alternative, analytic proofs of our results? One might imagine that, if possible, this might entail some new interesting idea.

We now collect some notation and propositions which will be employed throughout the rest of the paper. In this section, we only impose Assumption \ref{assum:accessible Hormander point}.

\begin{rmk}\label{rmk:vector fields are compactly supported for proof}
For the same reason that we were able to assume without loss of generality that $v^1,\ldots,v^n$ are bounded - we are only interested in behaviour on the compact invariant set $M$ - we may similarly assume without loss of generality that $v^1,\ldots,v^n$ are compactly supported. We impose this assumption throughout all of our proofs.
\end{rmk}

Throughout this paper, for any interval $[a,b)$, we define $I_{[a,b)}$ to be the sequence of states visited over the time interval $[a,b)$, with $\calT_{[a,b)}$ the corresponding occupation times. For instance, if $I_1=2$, $I:2\ra 3$ at time $t=\frac{3}{2}$ and $I:3\ra 5$ at $t=\frac{7}{4}$, this being the final jump prior to time $2$, then $I_{[1,2)}=(2,3,5)$ and $\calT_{[1,2)}=(\frac{1}{2},\frac{1}{4},\frac{1}{4})$. We define $I_{[a,b]}$ and $\calT_{[a,b]}$ similarly.

We now introduce some notation. For $t>0$ and $\ell\in\Nm$ we define 
\[
\circdelta_t^{\ell}:=\{(t_1,\ldots,t_{\ell})\in \Rm_{>0}^{\ell}:\sum_{k=1}^{\ell}t_k<t\},\quad \Delta_t^{\ell}:=\{(t_1,\ldots,t_{\ell})\in \Rm_{>0}^{\ell}:\sum_{k=1}^{\ell}t_k=t\}. 
\]
The set $\Delta^{\ell}_t$ should be viewed as an open subset of $\Rm^{\ell-1}$ (not as a subset of $\Rm^{\ell}$, in which it's not open).

We define
\[
\begin{split}
\calI_{\ell}:=\{\underline{i}=(i_1,\ldots,i_{\ell})\in E^{\ell}_n\quad\text{for all}\quad 1\leq k<\ell\},\quad \ell\in \Nm,\quad \calI:=\cup_{\ell}\calI_{\ell}.
\end{split}
\]
For $\uni=(i_1,\ldots,i_{\ell})\in \calI$ we write $\lvert \uni\rvert:=\ell$, which we call the length of $\uni$, so that $\uni\in \calI_{\lvert \uni\rvert}$ in general.

We now suppose that $\uni=(i_1,\ldots,i_{\ell})$ is a sequence of states of length $\ell$, that $F$ is a compact subset of $M$, and that $p_{\uni}\in C_c^{\infty}(\Rm_{>0}^l;[0,1))\setminus\{0\}$. We write $\text{spt}(p_{\uni})$ for the support of $p_{\uni}$.

We define the stopping time $\tau$ as follows. We firstly define
\[
t_0:=\inf\{t>0:I_t\neq I_{t-},\quad I_{[0,t)}=\uni\},
\]
with $t_0$ defined to be $+\infty$ if no such time exists. If $t_0<\infty$, then we set $\unt:=\calT_{[0,t_0)}$. We set
\begin{equation}\label{eq:kernel S from sequence of states general section}
\tau:=\begin{cases}
t_0\quad\text{with probability $p_{\uni}(\unt)$,}\\
+\infty\quad\text{otherwise}
\end{cases}.
\end{equation}

We then define the submarkovian kernel
\begin{equation}\label{eq:kernel general section smooth density propn}
\hat{S}((x,i),\cdot):=\Pm_{(x,i)}[\{\tau<\infty\}\cap \{(X_{\tau},I_{\tau})\in \cdot\}].
\end{equation}

\begin{prop}\label{prop:general section smooth density propn}
We assume that the kernel $\hat{S}$ is given by \eqref{eq:kernel general section smooth density propn}. We further assume that for all $x\in F$,
\[
\Rm^l_{>0}\ni \unt\mapsto \Phi^{\uni}(x,\unt)\in M
\]
is a submersion at all $\unt\in \text{spt}(p_{\uni})$. Then for all $\mu\in \calP(F\times E)$, $\mu \hat{S}(\cdot)$ has a $C^{\infty}_c(\inte(M)\times E)$ density with respect to Lebesgue measure. Furthermore, for all $x'\in F$, $\unt'\in \Rm^{\ell}_{>0}$ and $j_{\ast}\in E$ such that $p_{\uni}(\unt')>0$ and $Q_{i_{\ell}j_{\ast}}>0$, there exists $c_0,r>0$ such that if $\mu\in \calP((F\cap B(x',r))\times \{i_1\})$ then 
\[
\mu \hat{S}(\cdot)\geq c_0\Leb_{\lvert_{B(\Phi^{\uni}(x',\unt'),r)}}(\cdot)\otimes \delta_{j_{\ast}}.
\]
\end{prop}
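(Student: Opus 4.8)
The plan is to push forward the initial measure through the explicit structure of the stopping time $\tau$ defined in \eqref{eq:kernel S from sequence of states general section}. First I would unwind what $\mu\hat S$ actually is: starting from $(x,i)$ with $\mu$-distributed $x$ and $i=i_1$ forced on the support (on the event $\{\tau<\infty\}$ the chain $I$ must have followed the itinerary $\uni$), the position at time $\tau=t_0$ is $X_{t_0}=\Phi^{\uni}(x,\unt)$ where $\unt=\calT_{[0,t_0)}$, and $I_{t_0}$ is the next state after $i_\ell$. So I would write $\mu\hat S(\cdot)$ as an integral over $x\in F$, over occupation-time vectors $\unt\in\Rm^\ell_{>0}$, and over the next state $j$, of the law of $(\Phi^{\uni}(x,\unt),j)$, weighted by: (i) $\mu(dx\times\{i_1\})$; (ii) the density of the occupation times $\calT_{[0,t_0)}$ under the Markov chain $(I_t)$ conditioned to follow $\uni$ — this is a product of exponential densities in the $t_k$'s times the jump probabilities, hence a smooth, strictly positive function of $\unt$ on $\Rm^\ell_{>0}$; (iii) the acceptance probability $p_{\uni}(\unt)$; (iv) the probability $Q_{i_\ell j}/(-Q_{i_\ell i_\ell})$ of jumping to $j$. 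Crucially the weight in $\unt$ is supported in (and smooth on a neighbourhood of) $\text{spt}(p_{\uni})\subseteq\Rm^\ell_{>0}$, with the $\Delta$-constraint $\sum t_k=t_0$ being automatically a free variable here since $t_0$ is itself the (random) sum — so we genuinely integrate over an open subset of $\Rm^\ell$, not a simplex.

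Next, the smoothness claim: fix $j$. We have a measure on $M$ which is the pushforward under the map $(x,\unt)\mapsto \Phi^{\uni}(x,\unt)$ of an absolutely continuous measure on $F\times\Rm^\ell_{>0}$ whose density $\rho(x,\unt)$ is $C^\infty$ and compactly supported in the $\unt$-variable inside $\{p_{\uni}>0\}$. By hypothesis, for each fixed $x\in F$ the map $\unt\mapsto\Phi^{\uni}(x,\unt)$ is a submersion at every point of $\text{spt}(p_{\uni})$. This is the standard situation of the coarea/submersion pushforward lemma: integrating out the $x$-variable (against $\mu$, which need not be nice — but we don't differentiate in $x$) and then the fibre directions of the submersion against the smooth compactly supported density $\rho(x,\cdot)$ produces a $C^\infty$ density on $\inte(M)$; compact support follows because $M$ is compact and the image stays in $\inte(M)$ (the flow maps move interior points to interior points for the relevant times, by forward invariance and since $t_0>0$ a.s., one can arrange the support inside $\inte(M)$). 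I would cite or quickly reprove the elementary fact: if $g:U\to\Rm^d$ is a submersion on $\text{spt}(h)$, $h\in C_c^\infty(U)$, then $g_*(h\,dx)$ has a $C_c^\infty$ density; then integrate this over the parameter $x$ with the finite measure $\mu$ and use dominated convergence / differentiation under the integral to keep smoothness (the $C^\infty_c$ bounds are uniform in $x\in F$ by compactness of $F$ and $\text{spt}(p_{\uni})$).

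For the lower bound: fix $x'\in F$, $\unt'\in\text{spt}(p_{\uni})$ (interior, with $p_{\uni}(\unt')>0$) and $j_*$ with $Q_{i_\ell j_*}>0$. At $(x',\unt')$ the map $\unt\mapsto\Phi^{\uni}(x',\unt)$ is a submersion, so by the implicit function theorem / local surjectivity of submersions there is a local right inverse; combined with continuity in $x$ (the flow maps are smooth in all variables) one gets $\delta,r>0$ such that for every $x\in B(x',r)\cap F$ the image $\Phi^{\uni}(x,B(\unt',\delta))$ covers $B(\Phi^{\uni}(x',\unt'),r)$ (shrinking $r$) with Jacobian bounded below. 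On $B(\unt',\delta)$ the density $\rho(x,\unt)$ is bounded below by some $c_1>0$ uniformly in $x\in B(x',r)\cap F$ (by compactness and strict positivity of the exponential weight and of $p_{\uni}$ near $\unt'$, and $Q_{i_\ell j_*}>0$ ensures the $j_*$-component gets positive mass). Pushing this uniform lower bound forward through the submersion and integrating against $\mu$ (total mass $1$) yields $\mu\hat S(\cdot)\ge c_0\,\Leb|_{B(\Phi^{\uni}(x',\unt'),r)}\otimes\delta_{j_*}$.

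\textbf{Main obstacle.} The substantive point is the pushforward-of-a-smooth-density-under-a-submersion lemma with a parameter: one must check that the $C^\infty$ norms of $g_*(\rho(x,\cdot)\,d\unt)$ are controlled uniformly as $x$ ranges over the compact $F$, so that integrating in $x$ against $\mu$ (and differentiating under the integral sign) is legitimate — this needs the submersion hypothesis to hold with uniform bounds on $F\times\text{spt}(p_{\uni})$, which follows from compactness but should be spelled out. The bookkeeping of the occupation-time density of the conditioned Markov chain (a product of $\ell$ exponentials with the right rates and jump probabilities) is routine but must be written carefully to see it is smooth and positive on $\Rm^\ell_{>0}$; this, together with verifying that the image remains in $\inte(M)$ so one really lands in $C^\infty_c(\inte(M)\times E)$, is the other place where a little care is required.
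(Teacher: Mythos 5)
Your proposal follows essentially the same route as the paper: write $\mu\hat S$ restricted to a target state $j$ as the pushforward, under $\unt\mapsto\Phi^{\uni}(x,\unt)$, of the explicit occupation-time density $p_{\uni}(\unt)\big[\prod_k Q_{i_ki_{k+1}}e^{Q_{i_ki_k}t_k}\big]Q_{i_\ell j}e^{Q_{i_\ell i_\ell}t_\ell}$, invoke a parametrised (uniform over the compact $F$) submersion-pushforward lemma for smoothness of the kernel density, and use local surjectivity of the submersion together with positivity of the density near $\unt'$ for the Doeblin lower bound --- which is exactly the content of the paper's Lemmas \ref{lem:pushforward by smooth submersions lemma} and \ref{lem:pushforward by smooth submersions parametrised lemma}. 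The one small slip is your justification that the image lies in $\inte(M)$ (``flow maps move interior points to interior points'' does not apply, since $x\in F$ need not be interior); the correct and immediate reason is that the submersion is an open map, so the image of a neighbourhood of $\text{spt}(p_{\uni})$ is an open subset of $\Rm^d$ contained in $M$ by forward invariance, hence contained in $\inte(M)$.
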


We now consider stopping times $\tau$ and events $A$ satisfying the following condition.
\begin{cond}\label{cond:condition on tau,A for propn}
The stopping time $\tau$ and event $A$ satisfies:
\begin{enumerate}
\item
The pair $(\tau,A)$ is measurable with respect to $(I_t)_{0\leq t<\infty}$ and some probability space independent of $\sigma((X_t,I_t)_{0\leq t<\infty})$.
\item
The stopping time $\tau$ is integrable for any initial condition.
\end{enumerate}
\end{cond}

Given a pair $(\tau,A)$ satisfying Condition \ref{cond:condition on tau,A for propn}, we may define the submarkovian kernels
\begin{equation}\label{eq:kernels for preservation of Ck proposition}
\hat{S}^A((x,i),\cdot):=\Pm_{(x,i)}(A\cap\{(X_{\tau},I_{\tau})\in \cdot\})\quad\text{and}\quad G((x,i),\cdot):=\expE_{(x,i)}\Big[\Ind_A\int_0^{\tau}\delta_{(X_s,I_s)}(\cdot)ds\Big].
\end{equation}
We note that the kernel defined in \eqref{eq:kernel general section smooth density propn}, which is denoted by $\hat{S}$, is a special case of the first kernel defined in \eqref{eq:kernels for preservation of Ck proposition}, denoted by $\hat{S}^A$. This may be seen by taking $A=\{\tau<\infty\}$.

\begin{prop}\label{prop:general section Ck preservation propn}
For all $0\leq k<\infty$ there exists a constant $C_k<\infty$ dependent only upon the vector fields $v^1,\ldots,v^n$ such that if $\sup_{i\in E}\expE[C_k^{\tau}\lvert I_0=i]<\infty$ and $\mu\in\calP(M\times E)$ has a $C^{\infty}_0(\inte(M)\times E)$ density, then $\mu \hat{S}$ and $\mu G$ have $C^k_0(\inte(M)\times E)$ densities.
\end{prop}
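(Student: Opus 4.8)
The goal is to show that the integration-against-occupation-measure kernel $G$ (and the boundary kernel $\hat S$) improves the regularity of a $C^\infty_0(\inte(M)\times E)$ density to $C^k_0(\inte(M)\times E)$, provided $\tau$ has a sufficiently light exponential-type tail: $\sup_i\expE[C_k^\tau\mid I_0=i]<\infty$. The strategy I would follow is to condition on the path $(I_t)_{0\le t<\infty}$ of the jump chain (legitimate by part (1) of Condition~\ref{cond:condition on tau,A for propn}, which makes $(\tau,A)$ independent of $(X_t,I_t)$ given $(I_t)$), so that on each such realization the spatial dynamics is a deterministic composition $\Phi^{\uni}(x,\unt)$ of flow maps. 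Given a density $f$ for $\mu$ and a test path with states $\uni=(i_1,\dots,i_m)$ and occupation times $\unt$, the pushforward of $f$ under $\Phi^{\uni}(\cdot,\unt)$ has density obtained by the change-of-variables formula $f(\Phi^{\uni}(\cdot,\unt)^{-1}(y))\,|\det D_x\Phi^{\uni}|^{-1}$. The $k$-th derivatives of this object in $y$ involve: (i) up to $k$ derivatives of $f$, which are controlled since $f\in C^\infty_0$; (ii) derivatives of $\Phi^{\uni}$ and of its inverse with respect to $x$, up to order $k+1$ or so. Since each $\varphi^i_t$ is the flow of a smooth compactly supported vector field (Remark~\ref{rmk:vector fields are compactly supported for proof}), the spatial derivatives of $\Phi^{\uni}(x,\unt)$ up to any fixed order $k$ are bounded by $C_k^{\sum t_\ell}=C_k^{\tau}$ for a constant $C_k$ depending only on the $v^i$ (via Gronwall estimates on the variational equations: the $j$-th variation of a composition of $m$ flows over total time $t$ grows at most like $e^{L_j t}$). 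This is exactly where the hypothesis $\expE[C_k^\tau]<\infty$ enters: it guarantees that, after integrating over the law of $(I_t,\tau)$, all these path-dependent derivative bounds are integrable, so one may differentiate under the expectation $k$ times.

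Concretely, the steps in order would be: (1) Write $\mu\hat S^A(\cdot)=\expE[\Ind_A\,(\Phi^{\uni}(\cdot,\unt))_*\mu]$ and $\mu G(\cdot)=\expE[\Ind_A\int_0^\tau (\Phi^{\uni}(\cdot,\calT_{[0,s]}))_*\mu\,ds]$ after conditioning on the jump path; each inner object has, for fixed path, a smooth density by the change-of-variables formula (here one also uses that $\mu$ is supported in $M$, that flows map $\inte(M)$ into $\inte(M)$ — since $M$ is forward invariant — hence the resulting densities live on $\inte(M)$). (2) Establish the a priori estimate: for each $0\le j\le k$ there is $L_j<\infty$ depending only on $v^1,\dots,v^n$ with $\|D_x^j\Phi^{\uni}(\cdot,\unt)\|_\infty\le e^{L_j\sum_\ell t_\ell}$ and a similar bound for the Jacobian and its reciprocal (the latter needs a lower bound on $|\det D_x\Phi^{\uni}|$, again of the form $e^{-L_0\sum t_\ell}$ via Liouville's formula $\det D_x\varphi^i_t=\exp\int_0^t \mathrm{tr}\,Dv^i$). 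Set $C_k:=e^{L}$ for $L$ the max of the relevant constants. (3) Bound the $j$-th $y$-derivative of the density of $(\Phi^{\uni}(\cdot,\unt))_*\mu$ by a universal polynomial (Faà di Bruno) in the quantities from (2) times $\sup_{|\alpha|\le j}\|\partial^\alpha f\|_\infty$, giving a pointwise bound of the form $P(C_k^\tau)\cdot\|f\|_{C^j}$. (4) Since $\expE[C_k^\tau]<\infty$ implies $\expE[C_{k'}^{\tau}]<\infty$ for all $k'\le k$ (monotonicity in the base, as $C_{k'}\le C_k$ after possibly enlarging $C_k$), and since $\tau$ is integrable, dominated convergence / differentiation under the integral sign applies: $\mu\hat S$ and $\mu G$ have densities that are $k$ times continuously differentiable, with derivatives bounded by $\expE[P(C_k^\tau)]\cdot\|f\|_{C^k}<\infty$. (5) Finally, verify the $C^k_0$ decay near $\partial M$: because $f\in C^\infty_0$ its derivatives are uniformly small outside a compact $K\subseteq\inte(M)$, and the flows $\varphi^i_s$ for $s\le\tau$ move points by a bounded amount (vector fields are bounded), and moreover a neighborhood of $\partial M$ is mapped forward away from $\partial M$ only controllably — one shows the pushed-forward density and its derivatives are small near $\partial M$ by tracking that the preimage of a point near $\partial M$ under $\Phi^{\uni}$, when it exists, together with the Jacobian factor, keeps the product small; integrating the smallness over the (finite-expectation) path law preserves it.

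\textbf{Main obstacle.} The crux is step (2)–(3): getting \emph{clean, path-length-only} bounds on the high-order spatial derivatives of compositions of arbitrarily many flow maps, with a constant $C_k$ depending only on the vector fields and not on the number of switches or their pattern. The variational equations for a composition telescope, and a naive bound would accumulate a factor per switch; the point is that the relevant Gronwall constant depends only on $\sup_i\|v^i\|_{C^{k+1}}$ and the estimate is genuinely of the form $e^{(\text{const})\cdot(\text{total elapsed time})}$, i.e. in $C_k^\tau$, \emph{regardless} of how the time is split among switches. Making this uniformity precise — including for the reciprocal Jacobian and for the Faà di Bruno polynomial combining all the pieces — and then checking it survives the near-boundary $C^k_0$ bookkeeping of step (5), is the real work; everything else (conditioning, change of variables, dominated convergence) is routine once those estimates are in hand.
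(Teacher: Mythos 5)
Your proposal is correct and follows essentially the same route as the paper: condition on the jump path, write the pushed-forward density by change of variables, bound the $C^{k}$ norms of the (reversed) flow maps by $A_k e^{M_k t}$ via Gronwall on the variational equations (this is exactly the paper's Lemma \ref{lem:derivative of flow map lemma}, where treating the switched flow as a single time-inhomogeneous ODE makes the bound automatically uniform in the number of switches — the "main obstacle" you identify), and then differentiate under the expectation using $\expE[C_k^{\tau}]<\infty$, with dominated convergence giving the $C^k_0$ decay. The only cosmetic difference is that the paper works directly with the reverse-time flow $\phi^{\theta}_t$ and its Jacobian rather than with the inverse map and the reciprocal determinant.
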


Note that in the above proposition, $C_k$ is not dependent upon the rate matrix $Q$ nor on the choice of $(A,\tau)$. We note also that if the stopping time $\tau$ is uniformly bounded, then it follows that $\mu \hat{S}$ and $\mu G$  necessarily have $C_0^{\infty}(\inte(M)\times E)$ densities.

\subsection{Proof of Proposition \ref{prop:relation pi tilde with omega tilde G tilde}}

We take some arbitrary bounded measurable function $f$. The process $((X_t,I_t))_{0\leq t<\infty}$ is defined on a filtered probability space, whose filtration we denote as $(\calF_t)_{t\geq 0}$. To shorten notation, we define $Z_t:=(X_t,I_t)$, and write $P_t$ for the transition semigroup associated to $Z_t=(X_t,I_t)$. We then have the following
\[
\begin{split}
(\omega G)(P_tf)=\expE_{\omega}\Big[\int_0^{\tau}(P_tf)(Z_s)ds\Big]=\expE_{\omega}\Big[\int_0^{\infty}(P_tf)(Z_s)\Ind(s<\tau)ds\Big]\\
=\expE_{\omega}\Big[\int_0^{\infty}\expE[f(Z_{t+s})\Ind(s<\tau)\lvert \calF_s]ds\Big]=\expE_{\omega}\Big[\int_0^{\infty}f(Z_{t+s})\Ind(s<\tau)ds\Big]\\=\expE_{\omega}\Big[\int_t^{t+\tau}f(Z_{s})ds\Big]
=\expE_{\omega}\Big[\int_{\tau}^{\tau+t}f(Z_{s})ds-\int_0^{t}f(Z_{s})ds+\int_0^{\tau}f(Z_{s})ds\Big].
\end{split}
\]
We now use the fact that $(Z_t)_{t\geq 0}$ and $(Z_{\tau+t})_{t\geq 0}$ have the same distribution (that is, the strong Markov property) to conclude that
\[
(\omega G)(P_tf)=\expE_{\omega}\Big[\int_0^{\tau}f(Z_{s})ds\Big]=(\omega G)(f).
\]
Since $f$ is arbitrary, it follows that $\omega G$ is a stationary measure for $(Z_t)_{0\leq t<\infty}$, whence we have Proposition \ref{prop:relation pi tilde with omega tilde G tilde}.
\qed

\subsection{Proof of Proposition \ref{prop:general section smooth density propn}}

We prove the following lemma
\begin{lem}\label{lem:pushforward by smooth submersions lemma}
Suppose that $U\subseteq \Rm^n$, $V\subseteq \Rm^m$ are open subsets of $\Rm^n$ and $\Rm^m$ respectively, and that $f:U\ra V$ is a $C^{\infty}$ submersion. We suppose also that $\mu$ has a $C_c^{\infty}(U)$ density, which we call $\rho$. Then the pushforward $f_{\#}\mu$ has a $C_c^{\infty}(V)$ density which is strictly positive on $f(\{x:\rho(x)>0\})$.
\end{lem}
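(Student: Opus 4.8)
The plan is to prove Lemma \ref{lem:pushforward by smooth submersions lemma} by a local coordinate argument combined with a partition of unity, reducing the global statement to the model case where the submersion is a coordinate projection. First I would recall the constant-rank / submersion theorem: since $f:U\to V$ is a $C^\infty$ submersion, for every point $x_0\in U$ there is an open neighbourhood $W_{x_0}\subseteq U$ and a $C^\infty$ diffeomorphism $\psi_{x_0}:W_{x_0}\to W'_{x_0}\subseteq\Rm^m\times\Rm^{n-m}$ such that $f\circ\psi_{x_0}^{-1}$ is the projection $(y,z)\mapsto y$ onto the first $m$ coordinates. Covering the compact set $\supp(\rho)$ by finitely many such neighbourhoods $W_1,\dots,W_N$ and choosing a subordinate $C^\infty$ partition of unity $\chi_1,\dots,\chi_N$ with $\sum_j\chi_j\equiv 1$ on $\supp(\rho)$, I would write $\rho=\sum_j\rho_j$ with $\rho_j:=\chi_j\rho\in C^\infty_c(W_j)$, so that $f_{\#}\mu=\sum_j f_{\#}(\rho_j\,\Leb)$. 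Linearity then reduces matters to showing each $f_{\#}(\rho_j\,\Leb)$ has a $C^\infty_c(V)$ density.

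For a single piece, after applying the diffeomorphism $\psi_j$ (which multiplies the density by a smooth nonvanishing Jacobian factor, keeping it in $C^\infty_c$), I am in the model situation: $g:=$ projection $\Rm^m\times\Rm^{n-m}\supseteq W'_j\to\Rm^m$, and a density $\tilde\rho_j\in C^\infty_c(W'_j)$. The pushforward is then computed explicitly by Fubini: $g_{\#}(\tilde\rho_j\,\Leb)$ has density
\begin{equation}
h_j(y)=\int_{\Rm^{n-m}}\tilde\rho_j(y,z)\,dz,
\end{equation}
and this is $C^\infty$ with compact support by differentiation under the integral sign (justified since $\tilde\rho_j$ is smooth and compactly supported). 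Pushing back through $\psi_j$, summing over $j$, and noting the support stays inside $V$ since $f(\supp\rho)\subseteq V$ is compact, gives that $f_{\#}\mu$ has a $C^\infty_c(V)$ density.

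For the positivity claim, the key observation is that the density $h_j$ above is nonnegative (being an integral of a nonnegative function), so in the sum no cancellation can occur; hence $f_{\#}\mu$ has density equal to $\sum_j$ (nonnegative smooth functions), which is strictly positive at a point $v\in V$ whenever at least one $h_j$ is. Given $v=f(x)$ with $\rho(x)>0$, pick an index $j$ with $\chi_j(x)>0$, so $\rho_j(x)>0$; the continuity of $\rho_j$ gives a small ball around $x$ on which $\rho_j>0$, and its image under the relevant fibre coordinate has positive $(n-m)$-dimensional Lebesgue measure, forcing $h_j$ (in the appropriate coordinates) to be strictly positive at $v$. Assembling all pieces proves the lemma.

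I expect the only mildly delicate point to be bookkeeping: tracking how the density transforms under the local diffeomorphisms (the change-of-variables Jacobian) and confirming that smoothness and compact support of the support are preserved at each step, together with ensuring the partition of unity is genuinely subordinate so the local reductions are valid. None of this is hard, but it is where a careless argument could slip; the conceptual content — submersion normal form plus Fubini plus nonnegativity preventing cancellation — is routine. Once Lemma \ref{lem:pushforward by smooth submersions lemma} is in hand, Proposition \ref{prop:general section smooth density propn} should follow by applying it to the maps $\unt\mapsto\Phi^{\uni}(x,\unt)$ (which are submersions by hypothesis) and integrating over the starting measure $\mu$, with the lower bound coming from the strict positivity statement together with the fact that $Q_{i_\ell j_\ast}>0$ gives a positive probability of the final jump landing in state $j_\ast$.
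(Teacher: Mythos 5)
Your proposal is correct and follows essentially the same route as the paper: the paper's Claim 3.5 is exactly your local normal-form step (the chart $g=(f,\hat p)$ built from an orthogonal complement is the submersion normal form constructed by hand, followed by the projection/Fubini step), and the paper then assembles the global statement with the same partition-of-unity and no-cancellation argument you describe.
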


\begin{proof}[Proof of Lemma \ref{lem:pushforward by smooth submersions lemma}]
We firstly establish the following claim.
\begin{claim}\label{claim:smooth submersion lemma holds on small balls}
For all $x\in U$, there exists an open ball $x\in B_x=B(x,r_x)\subseteq U$ such that Lemma \ref{lem:pushforward by smooth submersions lemma} holds whenever $\mu$ has a $C_c^{\infty}(B_x)$ density.
\end{claim}
\begin{proof}[Proof of Claim \ref{claim:smooth submersion lemma holds on small balls}]
We fix $x\in U$, for which we seek to establish Claim \ref{claim:smooth submersion lemma holds on small balls}. We firstly observe that if $n=m$, so that for $r_x>0$ small enough $f:B_x\ra V$ is a smooth diffeomorphism onto its image, then Claim \ref{claim:smooth submersion lemma holds on small balls} is an immediate consequence of the change of variables formula. We therefore now consider the $n>m$ case.

For all $r_x>0$ small enough, there exists linearly independent unit vectors $e_1,\ldots,e_m$ in $\Rm^n$ such that $\partial_{e_1}f(x'),\ldots,\partial_{e_m}f(x')$ are linearly independent for all $x'\in B_x$. We define $A=\text{span}(e_1,\ldots,e_m)$ and $Z=A^{\perp}$. We define $\hat{p}$ to be the orthogonal projection $\Rm^n=A\times Z\ra Z$. Therefore for all $r_x>0$ small enough,
\[
\begin{split}
g:B_x\ra V\times Z\\
x'\mapsto (f(x'),\hat{p}(x'))
\end{split}
\]
is a smooth diffeomorphism onto its image. We now fix arbitrary $x^{\ast}\in B_x$ such that $\rho(x^{\ast})>0$. We therefore have from the $n=m$ case that $g_{\#}\mu$ has a $C^{\infty}_c(V\times Z)$ density which is strictly positive at $g(x^{\ast})$.

We now define $\iota:V\times Z\ni (v,b)\mapsto v\in V$. It is then immediate that 
\[
f_{\#}\mu=\iota_{\#}(g_{\#}\mu)
\]
has a $C_c^{\infty}(V)$ density which is strictly positive at $f(x^{\ast})=\iota\circ g(x^{\ast})$. Since $x^{\ast}\in \{x'\in B_x:\rho(x')>0\}$ is arbitrary, we have established Claim \ref{claim:smooth submersion lemma holds on small balls}.
\end{proof}

We now consider for each $x\in U$ some $\varphi_x\in C_c^{\infty}(B_{x})$ such that $\Ind_{B(x,\frac{r_x}{4})}\leq \varphi_x\leq \Ind_{B(x,\frac{r_x}{2})}$. We take a finite set $x_1,\ldots,x_r$ such that $\{B(x_i,\frac{r_{x_i}}{4}):1\leq i\leq r\}$ covers $\text{spt}(\mu)$. We then inductively define
\begin{equation}\label{eq:non-neg smooth functions covering construction}
\phi^1:=\varphi_{x^1},\quad \phi^{s}:=\varphi_{x^s}\Big(1-\sum_{\ell=1}^{s-1}\phi^{\ell}\Big),\quad 2\leq s\leq r. 
\end{equation}
We see that $0\leq \phi^s\leq 1$ with $\phi^s\in C_c^{\infty}(B(x_{s},\frac{r_{x_s}}{2}))$ for all $1\leq s\leq r$, and moreover that $\sum_{s=1}^r\phi^s\equiv 1$ on $\text{spt}(\mu)$.

We define $\mu^s$ for $1\leq s\leq r$ to be the measure with density $\phi^s\rho$, to which we may apply Claim \ref{claim:smooth submersion lemma holds on small balls}. Then since $\mu=\sum_{s=1}^r\mu^s$, $f_{\#}\mu=\sum_{s=1}^rf_{\#}\mu_s$ and we are done.

\end{proof}

We now use Lemma \ref{lem:pushforward by smooth submersions lemma} to prove the following lemma.
\begin{lem}\label{lem:pushforward by smooth submersions parametrised lemma}
Suppose that $U\subseteq \Rm^n$ and $W\subseteq \Rm^k$ are open subsets of $\Rm^n$ and $\Rm^k$ respectively, that $\Upsilon\subseteq \Rm^m$ is a measurable subset of $\Rm^m$, and that $f:W\times U\ra \Upsilon$ is $C^{\infty}$. We further define $V:=\inte(\Upsilon)$ and assume that $K\subseteq W$ is a non-empty compact subset of $W$. We define $f_w:U\ni x\mapsto f(w,x)\in \Upsilon$ for all $w\in W$, which we suppose is a submersion on $\text{spt}(\mu)$ for all $w\in K$. We suppose also that $\mu$ has a $C_c^{\infty}(U)$ density, which we denote as $\rho$. Then there exists $\varphi\in C^{\infty}_c(W\times V)$ which is strictly positive on $\{(w,f(w,x)):w\in K,\rho(x)>0\}$, such that the pushforward $f^w_{\#}\mu$ has density $\frac{df^w_{\#}\mu}{d\Leb}(v)=\varphi(w,v)$ for all $w\in K$.
\end{lem}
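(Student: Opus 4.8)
\textbf{Proof plan for Lemma \ref{lem:pushforward by smooth submersions parametrised lemma}.}

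The plan is to reduce the parametrised statement to the already-proved non-parametrised Lemma \ref{lem:pushforward by smooth submersions lemma} by the standard trick of absorbing the parameter into the domain. First I would consider the map
\[
\tilde f:W\times U\ni (w,x)\mapsto (w,f(w,x))\in W\times \Upsilon,
\]
and check that it is a $C^{\infty}$ submersion at every point $(w,x)$ with $w$ in a neighbourhood of $K$ and $x\in \text{spt}(\mu)$: indeed its differential has block-triangular form $\begin{pmatrix}\mathrm{Id}&0\\ *&D_x f(w,\cdot)\end{pmatrix}$, whose surjectivity onto $\Rm^k\times\Rm^m$ follows from surjectivity of $D_x f_w$, which holds by the submersion hypothesis on $f_w$. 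Since $K$ is compact and submersivity is an open condition, there is an open set $W_0$ with $K\subseteq W_0\subseteq W$ on which $\tilde f$ restricted to $W_0\times \text{spt}(\mu)$ is a submersion into $W_0\times V$ (shrinking so the image lands in the interior $V$ in the $\Upsilon$-coordinate whenever the base point is interior; more carefully, one works with the open set $W_0\times V$ as target, noting $\inte(W\times\Upsilon)=W\times V$).

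Next I would pick a cutoff $\chi\in C_c^{\infty}(W_0)$ with $\chi\equiv 1$ on a neighbourhood of $K$, and let $\tilde\mu$ be the measure on $W_0\times U$ with $C_c^{\infty}(W_0\times U)$ density $(w,x)\mapsto \chi(w)\rho(x)$ — this uses that the product of compactly supported smooth functions is compactly supported and smooth, and that $\text{spt}(\mu)$ being compact means $\chi(w)\rho(x)$ has compact support in $W_0\times U$. Applying Lemma \ref{lem:pushforward by smooth submersions lemma} to $\tilde f$ and $\tilde\mu$ yields that $\tilde f_{\#}\tilde\mu$ has a $C_c^{\infty}(W_0\times V)$ density $\varphi$, strictly positive on $\tilde f(\{(w,x):\chi(w)\rho(x)>0\})\supseteq \{(w,f(w,x)):w\in K,\ \rho(x)>0\}$ since $\chi\equiv 1$ near $K$. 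Viewing $\varphi\in C_c^{\infty}(W\times V)$ by extension by zero gives the asserted function.

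Finally I would verify the fibrewise identification $\frac{df^w_{\#}\mu}{d\Leb}(v)=\varphi(w,v)$ for $w\in K$. This is a disintegration/Fubini computation: for $w$ in the neighbourhood of $K$ where $\chi\equiv 1$, the measure $\tilde\mu$ disintegrates over the $w$-coordinate as $\delta_w\otimes\mu$ (up to the harmless factor $\chi(w)$), and $\tilde f$ preserves the $w$-coordinate, so $\tilde f_{\#}\tilde\mu$ disintegrates as $\delta_w\otimes f^w_{\#}\mu$; matching this against the density $\varphi(w,v)\,dw\,dv$ and using continuity in $w$ gives $f^w_{\#}\mu = \varphi(w,\cdot)\Leb$ for every $w\in K$ (the relation extends from a.e.\ $w$ to all $w\in K$ since both sides are weakly continuous in $w$ on $K$, $\varphi$ being continuous and $w\mapsto f^w_{\#}\mu$ being weakly continuous). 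The main obstacle is bookkeeping with the target: ensuring that ``$\inte$'' behaves correctly — i.e.\ that $\tilde f$ genuinely maps into the open set $W_0\times V$ where we want a density — and handling the boundary of $\Upsilon$, rather than any conceptual difficulty; the submersion argument itself is routine linear algebra.
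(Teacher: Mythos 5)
Your proposal is correct and follows essentially the same route as the paper: the paper also passes to the graph map $F(w,x)=(w,f(w,x))$, checks it is a submersion near $K\times\text{spt}(\mu)$, multiplies $\rho$ by a cutoff in the $w$-variable to get a compactly supported smooth density, applies Lemma \ref{lem:pushforward by smooth submersions lemma}, and then identifies the fibres via a disintegration plus continuity-in-$w$ argument (phrased there through the Riesz--Markov theorem). The only cosmetic difference is that the paper writes the cutoff as a single function $\psi\in C_c^{\infty}(N)$ agreeing with $\rho(x)$ on $A\times U$ rather than as a product $\chi(w)\rho(x)$.
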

\begin{proof}[Proof of Lemma \ref{lem:pushforward by smooth submersions parametrised lemma}]
We define the kernel
\[
H: W\ni w\mapsto f^w_{\#}\mu \in \calM(\Upsilon)\subseteq \calM(\Rm^m),
\]
and the $C^{\infty}$ map
\[
F:W\times U\ni (w,x)\mapsto (w,f(w,x))\in W\times \Upsilon.
\]
We observe that $F$ must be a submersion on $K\times \text{spt}(\mu)$, hence we can take open sets $A\supseteq K$, $B\supseteq \text{spt}(\mu)$ and $N\supset \supset A\times B$, the latter compactly containing the product of the first two, such that $F$ is a submersion on $N$. It follows that $F(N)$ is open so that $F(N)\subseteq W\times V$.

We write $\rho$ for the density of $\mu$. We then take $\psi\in C_c^{\infty}(N;\Rm_{\geq 0})$ such that $\psi((w,x))\equiv \rho(x)$ on $A\times U$, and define $\nu$ to be the measure on $W\times U$ with density $\psi$. We observe that
\begin{equation}
F_{\#}\nu(dw,dv)=H(w,dv)\Leb(dw)\quad\text{on}\quad A\times \Rm^m.
\end{equation}

It follows from Lemma \ref{lem:pushforward by smooth submersions lemma} that $F_{\#}\nu$ has a $C^{\infty}_c(W\times V)$ density which is strictly positive on $\{(w,f(w,x)):w\in K, \rho(x)>0\}$, which we denote as $\varphi(w,v)$. It therefore follows that for all $g\in C_0(W)$,
\[
Hg(w)=\int_{\Rm^m}g(v)\varphi(w,v)\Leb(dv)\quad\text{for Lebesgue almost-every $w\in A$.}
\]
Since both sides are continuous in $w\in W$, it follows that equality holds everywhere. Since $g\in C_0(\Rm^m)$ is arbitrary, it follows from the Riesz-Markov representation theorem that $f^w_{\#}\mu$ has density $(f^w_{\#}\mu)(dv)=\varphi(w,v)\Leb(dv)$ for all $w\in K$.
\end{proof}

We now use this lemma to prove Proposition \ref{prop:general section smooth density propn}. We firstly observe by construction that $\hat{S}((x,i),\cdot)$ is non-zero only if $i=i_1$. We fix arbitrary $1\leq j\leq n$ and define $\iota:\Rm^d\times\{j\}\ni (x,j)\mapsto x\in \Rm^d$. We define
\[
\tilde{S}(x,\cdot):=\iota_{\#}(\hat{S}((x,i),\cdot)_{\lvert_{E\times\{j\}}}).
\]
We take a measure $\chi\in \calP(\Rm_{>0}^{\ell})$ having a $C_c^{\infty}(\Rm_{>0}^{\ell})$ density $\rho(\unt)$ given by
\begin{equation}\label{eq:density rho proof of pushforward of measure under S has smooth density}
\rho(\unt)=p_{\uni}(\unt)\big[\prod_{k=1}^{\ell-1}Q_{i_ki_{k+1}}e^{Q_{i_ki_{k}}t_k}\big]Q_{i_{\ell}j}e^{Q_{i_{\ell}i_{\ell}}t_{\ell}}.
\end{equation}
We observe that
\[
\tilde{S}(x,\cdot)=(\unt\mapsto \Phi^{\uni}(x,\unt))_{\#}\chi.
\]
It follows from Lemma \ref{lem:pushforward by smooth submersions parametrised lemma} that there exists $\varphi\in C_c^{\infty}(\Rm^d\times \inte(M))$ such that $\tilde{S}$ satisfies $\tilde{S}(x,dy)=\varphi(x,y)\Leb(dy)$ for all $x\in F$.  It also follows from Lemma \ref{lem:pushforward by smooth submersions parametrised lemma} that $\varphi$ is strictly positive on $\{(x,\Phi^{\uni}(x,\unt):x\in F,p_{\uni}(\unt)>0\}$ whenever $Q_{i_{\ell}j}>0$. Proposition \ref{prop:general section smooth density propn} therefore follows.
\qed

\subsection{Proof of Proposition \ref{prop:general section Ck preservation propn}}

We may assume without loss of generality that $I_0=1$ with probability $1$, so that $\mu\in\calP(M\times\{1\})$.

We define $\rho$ to be the $C^{\infty}_0(\inte(M))$ density of $\mu$, so that $\mu(dy)=f(y)dy$. We define $(I_t)_{0\leq t<\infty}$ on the probability space $(\Theta,\vartheta)$, with the initial condition $X_0$ defined on a separate probability space, so that each $\theta\in\Theta$ defines a realisation of of $(I_t)_{0\leq t<\infty}$, $(I^{\theta}_t)_{0\leq t<\infty}$. We fix for the time being some arbitrary $\theta\in\Theta$. This then defines the flow map
\[
\psi^{\theta}_s(x):=x_s\quad\text{whereby}\quad \dot{x}_s:=v^{I^{\theta}_s}(x_s)\quad\text{and}\quad x_0=x.
\]
We may reverse $\psi^{\theta}_t(x)$ by defining
\[
\phi^{\theta}_t(y):=y_t\quad\text{whereby}\quad \dot{y}_s:=-v^{I^{\theta}_{t-s}}(y_s)\quad\text{and}\quad y_0=y,\quad 0\leq s\leq t.
\]
We then have that $y=\psi^{\theta}_t(x)$ if and only if $x=\phi^{\theta}_t(y)$.

Then defining $\mu^{\theta}_t:=(\psi^{\theta}_t)_{\#}\mu$, $\mu^{\theta}_t$ is given by
\[
\mu^{\theta}_t(\cdot)=\int_{M}\Ind(\psi^{\theta}_t(x)\in \cdot)\rho(x)dx=\int_{M}\Ind(y\in \cdot)\det(D\phi^{\theta}_t(y))\rho(\phi^{\theta}_t(y))dy.
\]
Therefore $\mu_t(dy)$ has density $\rho^{\theta}_t(y)$ given by
\begin{equation}\label{eq:density of pushforward measure}
\rho^{\theta}_t(y)=\det(D\phi^{\theta}_t(y))\rho(\phi^{\theta}_t(y)).
\end{equation}
We observe that $\rho^{\theta}_t\in C_0^{\infty}(\inte(M))$.

We now prove the following lemma
\begin{lem}\label{lem:derivative of flow map lemma}
For all $0\leq m<\infty$ there exists $A_m,M_m<\infty$ dependent only upon $v^1,\ldots,v^n$ such that $\lvert\lvert \varphi^{\theta}_t\rvert\rvert_{C^r}\leq A_me^{M_mt}$ for all $t\geq 0$, and for any $(I^{\theta}_s)_{0\leq s<\infty}$.
\end{lem}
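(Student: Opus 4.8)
\textbf{Proof plan for Lemma \ref{lem:derivative of flow map lemma}.}

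The plan is to prove the estimate $\lvert\lvert \varphi^{\theta}_t\rvert\rvert_{C^m}\leq A_m e^{M_m t}$ by induction on the order $m$ of differentiation, tracking how the spatial derivatives of the (time-inhomogeneous) flow $\varphi^{\theta}_t$ evolve along the flow. First I would recall that since the vector fields $v^1,\ldots,v^n$ are assumed compactly supported (Remark \ref{rmk:vector fields are compactly supported for proof}), all their spatial derivatives are uniformly bounded: there is a constant $B_m<\infty$, depending only on $v^1,\ldots,v^n$, such that $\sup_{i\in E}\lvert\lvert v^i\rvert\rvert_{C^m}\leq B_m$. The key point is that whatever the realisation $(I^{\theta}_s)_{0\leq s<\infty}$, the driving vector field $v^{I^{\theta}_s}$ at each time $s$ belongs to this fixed finite family, so the bounds are uniform in $\theta$. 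I would also note at the outset that it suffices to treat $\varphi^\theta_t$, since $\phi^\theta_t$ used in \eqref{eq:density of pushforward measure} is of the same form (it is the flow of the time-reversed piecewise-constant selection $s\mapsto -v^{I^\theta_{t-s}}$, which again takes values in a fixed finite family of vector fields with the same $C^m$ bounds).

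The base case $m=0$: writing $y_s:=\varphi^\theta_s(x)$, we have $\lvert \dot{y}_s\rvert = \lvert v^{I^\theta_s}(y_s)\rvert \leq B_0$, so $\lvert \varphi^\theta_t(x)\rvert \leq \lvert x\rvert + B_0 t$; since $x$ ranges over the compact set $M$ (or, as the vector fields are compactly supported, we may take $\lvert x\rvert$ bounded by a fixed constant), this gives a bound of the form $A_0 e^{M_0 t}$. For the case $m=1$: the Jacobian $J_s:=D\varphi^\theta_s(x)$ satisfies the linear (non-autonomous) variational equation $\dot{J}_s = Dv^{I^\theta_s}(y_s)\, J_s$, $J_0 = \mathrm{Id}$, whose coefficient matrix has operator norm bounded by $B_1$; by Gr\"onwall, $\lvert J_s\rvert \leq e^{B_1 s}$, which is the claimed bound. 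For the inductive step, suppose the $C^r$ bounds hold for all $r<m$. Differentiating the variational equation $m$ times in $x$ and applying the multivariate Fa\`a di Bruno / Leibniz rule, the quantity $K_s:=D^m\varphi^\theta_s(x)$ satisfies an equation of the form $\dot{K}_s = Dv^{I^\theta_s}(y_s)\, K_s + R_s$, where the remainder $R_s$ is a universal polynomial expression in the derivatives $D^{\beta} v^{I^\theta_s}(y_s)$ with $\lvert\beta\rvert\leq m$ and the derivatives $D^{r}\varphi^\theta_s(x)$ with $r\leq m-1$. By the inductive hypothesis and the bound $\sup_i\lvert\lvert v^i\rvert\rvert_{C^m}\leq B_m$, we get $\lvert R_s\rvert \leq P(e^{M_{m-1}s})$ for some polynomial $P$, hence $\lvert R_s\rvert \leq C e^{M' s}$ for suitable constants depending only on the vector fields. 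Gr\"onwall's inequality applied to $\dot{K}_s = Dv^{I^\theta_s}(y_s) K_s + R_s$ with $\lvert Dv^{I^\theta_s}(y_s)\rvert\leq B_1$ then yields $\lvert K_t\rvert \leq e^{B_1 t}\big(\lvert K_0\rvert + \int_0^t e^{-B_1 s}\lvert R_s\rvert\, ds\big) \leq A_m e^{M_m t}$ for appropriate $A_m, M_m<\infty$ depending only on $v^1,\ldots,v^n$. Taking the maximum over $0\leq r\leq m$ of the constants obtained gives the stated $C^m$ bound.

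The main obstacle is purely bookkeeping: writing down the non-autonomous variational equation satisfied by the $m$-th order spatial derivative of the flow and controlling the structure of the inhomogeneous term $R_s$ — one must check that $R_s$ involves only strictly lower-order derivatives of $\varphi^\theta$ (so the induction closes) together with derivatives of the vector field of order $\leq m$ (which are uniformly bounded), and that the exponential-in-$t$ bounds from lower orders propagate through the Gr\"onwall step without blowing up to a non-exponential rate. There is no genuine analytic difficulty, and crucially all constants are uniform over the realisation $\theta$ because the coefficient functions at every time are drawn from the fixed finite family $\{v^i\}_{i\in E}$; this uniformity is what the lemma asserts and what makes it usable in the proof of Proposition \ref{prop:general section Ck preservation propn}.
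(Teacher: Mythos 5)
Your proposal is correct and follows essentially the same route as the paper: induction on $m$, the variational (integral) equation for $D\varphi^{\theta}_t$, uniform bounds on the derivatives of the finitely many compactly supported vector fields, and Gr\"onwall's inequality to close the induction. The extra detail you supply on the structure of the inhomogeneous term and on the reversed flow $\phi^{\theta}_t$ is consistent with how the paper uses the lemma.
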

\begin{proof}[Proof of Lemma \ref{lem:derivative of flow map lemma}]
We proceed by induction, noting that the $m=0$ case is immediate.

We have that
\[
D \varphi_t(x)=\int_0^tDv^{I_s}(\varphi_s(x))D\varphi_s(x)ds+\text{Id}
\]
We now define $C:=\sup_{(x,i)\in M\times E}\lvert Dv^i(x)\rvert$. Therefore there exists $A'<\infty$ such that we have
\[
\lvert\lvert D\varphi_t\rvert\rvert_{C^m}\leq \int_0^tC\lvert\lvert D\varphi_s\rvert\rvert_{C^m}ds+A'\int_0^t\lvert\lvert Dv^{I_s}\circ \varphi_s\rvert\rvert_{C^{m-1}}\lvert\lvert D\varphi_s\rvert\rvert_{C^{m-1}}ds.
\]
Applying Gronwall's inequality, we are done.
\end{proof}
We now apply Lemma \ref{lem:derivative of flow map lemma} with the reversed vector fields $-v^1,\ldots,-v^n$ to obtain $A'_{k+1},M'_{k+1}<\infty$ such that $\lvert\lvert \phi^{\theta}_t\rvert\rvert_{C^{k+1}}\leq A'_{k+1}e^{M_{k+1}'t}$ for all $\theta\in \Theta$. We then use \eqref{eq:density of pushforward measure} to see that $\lvert\lvert \rho^{\theta}_t\rvert\rvert_{C_k}\leq \tilde{A}_ke^{\tilde{M}_kt}$, for some $\tilde{A}_k,\tilde{M}_k<\infty$ dependent only upon $v^1,\ldots,v^n$.
 
We now fix arbitrary $1\leq j\leq n$. Using Fubini's theorem and the fact that $(\tau,A)$ is a measurable function of $(I^{\theta}_t)_{0\leq t<\infty}$ and some probability space independent of $(X_t,I_t)_{0\leq t<\infty}$, we see that
\[
\mu \hat{S}^A_{\lvert_{E\times \{j\}}}(\cdot)=\expE[(\varphi^{\theta}_{\tau})_{\#}\mu(\cdot)\Ind_A\Ind(I^{\theta}_{\tau}=j)]\otimes\delta_j\quad\text{and}\quad \mu G(\cdot)=\expE\Big[\int_0^{\tau}\Ind(I^{\theta}_{s}=j)(\varphi^{\theta}_{s})_{\#}\mu(\cdot)ds\Big]\otimes\delta_j.
\]
Therefore $\mu \hat{S}^A$ and $\mu G$ have densities on $E\times \{j\}$ given by
\begin{equation}\label{eq:formula for density of muS and muG}
\frac{d\mu \hat{S}^A}{d\Leb}(x)=\expE[\rho^{\theta}_{\tau}(x)\Ind_A\Ind(I^{\theta}_{\tau}=j)]\quad\text{and}\quad \frac{d\mu G}{d\Leb}(x)=\expE\Big[\int_0^{\tau}\Ind(I^{\theta}_{s}=j)\rho^{\theta}_s(x)ds\Big].
\end{equation}
Differentiating under the integral, we see that $\mu \hat{S}^A$ and $\mu G$ have $C^k$ densities with respect to Lebesgue so long as
\begin{equation}\label{eq:bound for Ck densities in prop proof}
\expE[\tilde{A}_ke^{\tilde{M}_k\tau}]<\infty\quad\text{and}\quad \expE\Big[\int_0^{\infty}\tilde{A}_ke^{\tilde{M}_kt}\Pm(\tau>t)dt\Big]<\infty.
\end{equation}

Moreover, since $\rho^{\theta}_{\tau}\in C_0^{\infty}(\inte(M))$ and $\rho^{\theta}_{s}\in C_0^{\infty}(\inte(M))$ for all $\theta$ and all $s\geq 0$, we can apply the dominated convergence theorem to \eqref{eq:formula for density of muS and muG} to see that $\mu \hat{S}^A$ and $\mu G$ have $C^k_0(\inte(M))$ densities whenever we have \eqref{eq:bound for Ck densities in prop proof}.

We therefore have Proposition \ref{prop:general section Ck preservation propn}.

\section{Proof of Theorem \ref{theo:lower semicts stat density}}\label{section proof of lower semicts stat density}

We fix $Q\in \calQ$ throughout. We fix the point $x^{\ast}\in M$ assumed to be accessible and at which the weak H\"ormander condition is satisfied. Since the weak H\"ormander condition is satisfied at $x^{\ast}$, we may choose $\epsilon,r>0$, $\xi\in \calI$ of length $\ell=\lvert \xi\rvert$ and $p\in C_c^{\infty}(\circdelta^{\ell}_{\epsilon})$ such that
\[
\circdelta_{\ell}^{\epsilon}\ni \unt\mapsto \Phi^{\xi}(x,\unt)
\]
is a submersion at all $\unt\in\text{spt}(p)$ and all $x\in B(x^{\ast},r)$, by the proof of \cite[Theorem 4.4]{Benaim2015a}.

This then defines the kernel $\hat{S}$ as in \eqref{eq:kernel general section smooth density propn}. In particular, we define
\[
t_0:=\inf\{t>0:I_t\neq I_{t-},\quad I_{[0,t)}=\xi\},
\]
with $t_0$ defined to be $+\infty$ if no such time exists. If $t_0<\infty$, then we set $\unt:=\calT_{[0, t_0)}$. We set
\begin{equation}\label{eq:hat tau stopping time proof of lower semicty}
\hat{\tau}:=\begin{cases}
t_0\quad\text{with probability $p_{\xi}(\unt)$,}\\
+\infty\quad\text{otherwise}
\end{cases}.
\end{equation}

We note in particular that if $ \hat{\tau}<\infty$ then $\hat{\tau}<\epsilon$. We define $\hat{A}=\{\hat{\tau}<\infty\}$. We then have the kernel
\begin{equation}
\hat S((x,i),\cdot):=\Pm_{(x,i)}( \hat{A}\cap \{(X_{\hat\tau},I_{\hat\tau})\in \cdot\}).
\end{equation}

We note that, reducing $r>0$ if necessary, Proposition \ref{prop:general section smooth density propn} provides us with the following lemma.
\begin{lem}\label{lem:lower semicty pf bar S doeblin Cinfty}
There exists $\hat c_0>0$ and $\nu\in\calP(M\times E)$ such that $\mu \hat S$ has a $C_0^{\infty}(\inte(M)\times E)$ density satisfying $\mu \hat S\geq \hat c_0\nu$, for all $\mu\in \calP(B(x^{\ast},r)\times \{\xi_1\})$.
\end{lem}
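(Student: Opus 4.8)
The plan is to apply Proposition \ref{prop:general section smooth density propn} directly, with the stopping time $\hat\tau$ given by \eqref{eq:hat tau stopping time proof of lower semicty} (equivalently the kernel $\hat S$ with $A=\{\hat\tau<\infty\}$, as noted after \eqref{eq:kernels for preservation of Ck proposition}), the compact set $F:=\overline{B(x^{\ast},r)}$, the sequence of states $\xi$, and the smooth cutoff $p=p_\xi$. By our choice of $\xi$, $p$ and $r$ at the start of the section, the map $\unt\mapsto\Phi^{\xi}(x,\unt)$ is a submersion at every $\unt\in\text{spt}(p_\xi)$ for every $x\in B(x^{\ast},r)$; shrinking $r$ slightly we may take this to hold on the closed ball $F$ as well. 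Hence the first conclusion of Proposition \ref{prop:general section smooth density propn} gives that $\mu\hat S$ has a $C^\infty_c(\inte(M)\times E)\subseteq C^\infty_0(\inte(M)\times E)$ density for every $\mu\in\calP(F\times E)$, in particular for every $\mu\in\calP(B(x^{\ast},r)\times\{\xi_1\})$.

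For the lower bound, I would invoke the second conclusion of Proposition \ref{prop:general section smooth density propn}. Fix any $\unt'\in\text{spt}(p_\xi)$ with $p_\xi(\unt')>0$ and any $j_\ast\in E$ with $Q_{i_\ell j_\ast}>0$ (such $j_\ast$ exists since $Q\in\calQ$ has all off-diagonal entries positive, so we may even take any $j_\ast\neq i_\ell$). The proposition then furnishes constants $c_0,r'>0$ such that for every $\mu\in\calP\big((F\cap B(x^{\ast},r'))\times\{\xi_1\}\big)$,
\[
\mu\hat S(\cdot)\ \geq\ c_0\,\Leb|_{B(\Phi^{\xi}(x^{\ast},\unt'),r')}(\cdot)\otimes\delta_{j_\ast}.
\]
Now replace $r$ by $\min(r,r')$ (which only improves the first conclusion, since $B(x^{\ast},r)$ shrinks), set $\hat c_0:=c_0\,\Leb\big(B(\Phi^{\xi}(x^{\ast},\unt'),r')\big)>0$, and let $\nu$ be the probability measure $\big(\Leb(B(\Phi^{\xi}(x^{\ast},\unt'),r'))\big)^{-1}\Leb|_{B(\Phi^{\xi}(x^{\ast},\unt'),r')}\otimes\delta_{j_\ast}\in\calP(M\times E)$ (note $\Phi^{\xi}(x^{\ast},\unt')\in\inte(M)$ by the submersion property, so for $r'$ small the ball lies in $M$). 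This yields $\mu\hat S\geq\hat c_0\nu$ for all $\mu\in\calP(B(x^{\ast},r)\times\{\xi_1\})$, which is the claimed estimate.

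There is essentially no obstacle here: the lemma is a bookkeeping consequence of Proposition \ref{prop:general section smooth density propn}, the only points requiring a word of care being (i) that one may pass from the open ball $B(x^{\ast},r)$ in the hypothesis of Proposition \ref{prop:general section smooth density propn} to a closed ball $F$ by a harmless shrinkage of $r$, so that both conclusions of the proposition apply with the same $F$ and compatible radii; and (ii) that the two conclusions are applied with a common final radius, obtained by taking the minimum — monotonicity of the hypotheses in $r$ makes this legitimate. The submersion hypothesis itself was already secured via the proof of \cite[Theorem 4.4]{Benaim2015a} using the weak H\"ormander condition at $x^{\ast}$, so nothing new is needed on that front.
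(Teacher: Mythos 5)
Your proof is correct and is exactly the paper's argument: the paper dispenses with this lemma in one line ("reducing $r>0$ if necessary, Proposition \ref{prop:general section smooth density propn} provides us with the following lemma"), and your proposal simply spells out the same application of that proposition — first conclusion for the $C^\infty$ density, second conclusion with $x'=x^{\ast}$ for the minorization, and a shrinkage of $r$ to reconcile the radii.
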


By the accessibility of $x^{\ast}$, for every $(x,i)\in M\times E$ there exists $\eta^{(x,i)}$ of length $\ell_{(x,i)}$, $\delta_{(x,i)}>0$, $L_{(x,i)}<\infty$ and open $U_{(x,i)}\subseteq \circdelta_{L_{(x,i)}}^{\ell_{(x,i)}}$ such that
\[
\Phi^{\eta^{(x,i)}}(x',\unt)\in B\Big(x^{\ast},\frac{r}{2}\Big)\quad\text{for every}\quad \unt\in U_{(x,i)},\quad x'\in B(x,\delta_{(x,i)}),
\]
$\eta^{(x,i)}_1=i$, and moreover $\eta^{(x,i)}_{\ell_{(x,i)}}\neq \xi_1$, so that one can jump from the final state of $\eta^{(x,i)}$ to the first state of $\xi$. We define the stopping time
$\tilde{\tau}_{(x,i)}$ and event $\tilde{A}_{(x,i)}$ as
\[
\tilde{\tau}_{(x,i)}:=\inf\{t>0:I_t=\xi_1,\quad I_{[0,t)}=\eta^{(x,i)},\quad \calT_{[0,t)}\in U_{(x,i)}\},\quad \tilde{A}_{(x,i)}:=\{\tilde{\tau}_{(x,i)}<\infty\}.
\]
We observe that if $\tilde{\tau}_{(x,i)}<\infty$, then $\tilde{\tau}_{(x,i)}< L_{(x,i)}$.

We now take $(x_1,i_1),\ldots,(x_m,i_m)$ such that $B(x_1,\frac{\delta_{(x_1,i_1)}}{2})\times \{i_1\},\ldots,B(x_m,\frac{\delta_{(x_m,i_m)}}{2})\times \{i_m\}$ is a finite cover of $M\times E$. For $1\leq k\leq m$ we write $\eta^k$, $\ell_k$, $\delta_k$, $L_k$, $U_k$, $\tilde{\tau}_k$ and $\tilde{A}_k$ for $\eta^{(x_k,i_k)}$, $\ell_{(x_k,i_k)}$, $\delta_{(x_k,i_k)}$, $L_{(x_k,i_k)}$, $U_{(x_k,i_k)}$, $\tilde{\tau}_{(x_k,i_k)}$ and $\tilde{A}_{(x_k,i_k)}$ respectively. This then defines the kernels
\[
\tilde{S}_k((x,i),\cdot):=\Pm_{(x,i)}(\tilde{A}_{k}\cap \{(X_{\tilde{\tau}_k},I_{\tilde{\tau}_k})\in \cdot\}),\quad 1\leq k\leq m.
\]
\begin{observation}\label{observation:tilde Sk maps to ball lower semicty}
We observe that for all $1\leq k\leq m$ there exists $\tilde{c}_k>0$ such that $\mu \tilde{S}_k$ is supported on $B(x^{\ast},r)\times \{\xi_1\}$ with mass at least $\tilde{c}_k$, for all $\mu\in\calP(B(x_k,\delta_k)\times \{i_k\})$. 
\end{observation}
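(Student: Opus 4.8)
\textbf{Proof plan for Observation \ref{observation:tilde Sk maps to ball lower semicty}.}

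The plan rests on the fact that both $\tilde\tau_k$ and the event $\tilde A_k$ are functionals of the autonomous chain $(I_t)_{0\le t<\infty}$ alone, so that $\Pm_{(x,i_k)}(\tilde A_k)$ does not depend on the spatial initial condition $x$. First I would unwind the definitions. Start from $(x,i_k)$ with $x\in B(x_k,\delta_k)$ and work on $\tilde A_k=\{\tilde\tau_k<\infty\}$. The requirements $I_{[0,\tilde\tau_k)}=\eta^k$ and $I_{\tilde\tau_k}=\xi_1$, together with $\eta^k_1=i_k$ and $\eta^k_{\ell_k}\neq\xi_1$, force $\tilde\tau_k$ to be the time of the $\ell_k$-th jump of $(I_t)$, with jump sequence $\eta^k_1\to\eta^k_2\to\cdots\to\eta^k_{\ell_k}\to\xi_1$ and vector of holding times $\unt:=\calT_{[0,\tilde\tau_k)}\in U_k$. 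Consequently $X_{\tilde\tau_k}=\Phi^{\eta^k}(x,\unt)$, and since $x\in B(x_k,\delta_k)$ and $\unt\in U_k$, the property built into the accessibility construction of $(\eta^k,\delta_k,U_k)$ gives $\Phi^{\eta^k}(x,\unt)\in B(x^\ast,\frac{r}{2})\subseteq B(x^\ast,r)$. As also $I_{\tilde\tau_k}=\xi_1$, this establishes that $\mu\tilde S_k$ is carried by $B(x^\ast,r)\times\{\xi_1\}$ for every $\mu\in\calP(B(x_k,\delta_k)\times\{i_k\})$.

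For the mass bound, I would read $\tilde A_k$ as the event that the embedded jump chain of $(I_t)$ passes through $\eta^k_1,\dots,\eta^k_{\ell_k},\xi_1$ at its first $\ell_k+1$ states and that the corresponding holding times $(s_1,\dots,s_{\ell_k})$ lie in $U_k$. The standard description of a continuous-time chain with rate matrix $Q$ then gives
\[
\Pm_{(x,i_k)}(\tilde A_k)=\int_{U_k}\Big[\prod_{j=1}^{\ell_k-1}Q_{\eta^k_j\eta^k_{j+1}}\,e^{Q_{\eta^k_j\eta^k_j}s_j}\Big]\,Q_{\eta^k_{\ell_k}\xi_1}\,e^{Q_{\eta^k_{\ell_k}\eta^k_{\ell_k}}s_{\ell_k}}\,ds,
\]
independently of $x$. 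Since $Q\in\calQ$ has all off-diagonal entries strictly positive --- here one uses that $\eta^k\in\calI$, so each transition $\eta^k_j\to\eta^k_{j+1}$ is between distinct states, and that $\eta^k_{\ell_k}\neq\xi_1$ --- the integrand is continuous and strictly positive on the non-empty open set $U_k$, so the integral is a strictly positive constant, which I would take as $\tilde c_k$. Finally, since $(X_{\tilde\tau_k},I_{\tilde\tau_k})\in M\times E$ whenever $\tilde A_k$ occurs, the total mass of $\mu\tilde S_k$ equals $\int\Pm_{(x,i_k)}(\tilde A_k)\,\mu(dx)=\tilde c_k$ for every $\mu\in\calP(B(x_k,\delta_k)\times\{i_k\})$, completing the argument.

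I do not expect a genuine obstacle here: the content is essentially bookkeeping. The only two points deserving a sentence each are that $\tilde\tau_k$, when finite, is unambiguously the $\ell_k$-th jump time --- which holds because $I_{[0,t)}=\eta^k$ and $I_t=\xi_1\neq\eta^k_{\ell_k}$ cannot both be true at two distinct times $t$ --- and that $U_k$ is a non-empty open set, which is precisely what the accessibility construction producing it guarantees (and one may take $\eta^k\in\calI$ there, collapsing any consecutive repeats via $\varphi^i_s\circ\varphi^i_t=\varphi^i_{s+t}$ if necessary).
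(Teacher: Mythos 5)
Your argument is correct and is exactly the verification the paper intends (the paper states this as an ``observation'' without proof, since it follows by unwinding the definitions of $\tilde\tau_k$, $U_k$ and $\eta^k$ precisely as you do). Both key points are handled properly: the support claim follows from $X_{\tilde\tau_k}=\Phi^{\eta^k}(x,\unt)\in B(x^\ast,\tfrac r2)$ with $I_{\tilde\tau_k}=\xi_1$ on $\tilde A_k$, and the mass bound follows because $\Pm(\tilde A_k)$ is an $x$-independent integral of a strictly positive continuous integrand over the non-empty open set $U_k$, using that all off-diagonal entries of $Q$ are positive.
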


We set 
\[
H=\sup_{1\leq j\leq m}L_k+\epsilon.
\]
We define the stopping times $\tau_k$ for $k\in \{1,\ldots,m\}$ as follows:
\begin{enumerate}
\item
We examine whether we have the event $\tilde{A}_k$. If not, we declare that $\tau_k=+\infty$. If so, we take the time $\tilde{\tau}_k$ and proceed to the next step.
\item
We then examine the time interval $[\tilde{\tau}_k,\tilde{\tau}_k+\epsilon)$, and investigate whether we have the event $\hat{A}$ on this time interval. If this is the case, we declare that $\tau_k=\tilde{\tau}_k+ \hat{\tau}$. If not, we declare that $\tau_k:=+\infty$.
\end{enumerate}
We then take $\psi^k\in C_c^{\infty}(B(x_k,\delta_k)\times \{i_k\};\Rm_{\geq 0})$ for $1\leq k\leq m$ such that $\sum_{k=1}^{\infty}\psi^k\equiv 1$ on $M\times E$, which may be constructed as in \eqref{eq:non-neg smooth functions covering construction}. This then defines the kernels
\[
R_k((x,i),\cdot)=\psi_k((x,i))\delta_{(x,i)}(\cdot),\quad 1\leq k\leq m.
\]

We can now define the stopping time $\tau$ inductively as follows. 
\begin{defin}
We investigate whether $\tau\in [nH,(n+1)H)$, beginning with $n=0$. We proceed in the following two steps:
\begin{enumerate}
\item\label{enum:step 1 stopping time construction lower semicty}
Given the initial position $(X_{nH},I_{nH})$, we choose $k\in \{1,\ldots,m\}$ with probability $\psi^k((x_{nH},I_{nH}))$.
\item\label{enum:step 2 stopping time construction lower semicty}
Having chosen $k$, we examine the time interval $[nH,(n+1)H)$. We determine whether we have the event $\{\tau_k<\infty\}$ over this time interval, where $\tau_k$ is defined as above with time $nH$ redefined to be time $0$. If this is the case, we define $\tau:=nH+\tau_k$. If not, we determine that $\tau\geq (n+1)H$.
\end{enumerate}
If it is determined that $\tau\geq (n+1)H$, we repeat the above procedure. It is easy to see that this procedure must terminate. In particular, since the probability of the process terminating in the next time $H$ is uniformly bounded away from $0$, $\tau$ must be integrable for any initial condition.
\end{defin}
\begin{rmk}\label{rmk:lower semicty filtration remark}
In steps \ref{enum:step 1 stopping time construction lower semicty} and \ref{enum:step 2 stopping time construction lower semicty} of Definition we are making random choices independently of $(X_t,I_t)_{0\leq t<\infty}$. We do this with an independent filtration $(\calF'_t)_{0\leq t<\infty}$. We then obtain a new filtration, $(\calF_t)_{0\leq t<\infty}$, by taking for $\calF_t$ the sigma-algebra generated by our original filtration at time $t$ and $\calF'_t$. Then $(X_t,I_t)_{0\leq t<\infty}$ remains Markov and $\tau$ is an $\calF$-stopping time.
\end{rmk}

We may therefore define the following kernels,
\[
\begin{split}
S((x,i),\cdot):=\Pm_{(x,i)}((X_{\tau},I_{\tau})\in \cdot),\quad G((x,i),\cdot):=\expE_{(x,i)}\Big[\int_0^{\tau}\delta_{(X_s,I_s)}(\cdot)ds\Big],\\ P((x,i),\cdot):=\Pm_{(x,i)}((X_{H},I_{H})\in \cdot,\tau> H), \quad S_0((x,i),\cdot):=\Pm_{(x,i)}((X_{\tau},I_{\tau})\in \cdot,\tau<H),\\ G_0((x,i),\cdot):=\expE_{(x,i)}\Big[\int_0^{\tau\wedge H}\delta_{(X_s,I_s)}(\cdot)ds\Big].
\end{split}
\]
We further define
\[
G_N:=\sum_{n=0}^{N}P^nG_0\quad\text{for}\quad N<\infty,\quad \text{and observe that}\quad G=\sum_{n=0}^{\infty}P^nG_0,
\]
the latter following since
\[
\begin{split}
G((x,i),\cdot)=\expE_{(x,i)}\Big[\sum_{n=0}^{\infty}\int_{\tau\wedge nH}^{\tau\wedge ((n+1)H)}\delta_{(X_s,I_s)}(\cdot)ds\Big]\\
=\sum_{n=0}^{\infty}\expE_{(x,i)}\Big[\expE\Big[\int_{nH}^{\tau\wedge ((n+1)H)}\delta_{(X_s,I_s)}(\cdot)ds\Big\lvert \calF_{nH}\Big]\Ind(\tau>nH)\Big]\\=\sum_{n=0}^{\infty}\expE_{(x,i)}\Big[G_0((X_{nH},I_{nH}),\cdot)\Ind(\tau>nH)\Big]=\sum_{n=0}^{\infty}(P^nG_0)((x,i),\cdot).
\end{split}
\]

Our goal will to prove the following lemma.
\begin{lem}\label{lem:lemma for lower semicty theorem}
There exists a unique stationary distribution for $S$, which we denote as $\omega$, and which has a $C_0^{\infty}(\inte(M)\times E)$ density. Furthermore, for all $N<\infty$, $G_N$ sends measures with $C_0^{\infty}(\inte(M)\times E)$ densities to measures with $C_0^{\infty}(\inte(M)\times E)$ densities.
\end{lem}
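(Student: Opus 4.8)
The plan is to prove Lemma~\ref{lem:lemma for lower semicty theorem} in two parts, treating the regularity claim for $G_N$ first since it feeds into the analysis of $\omega$. For the $G_N$ claim, observe that $G_N=\sum_{n=0}^N P^nG_0$, so it suffices to show that $P$ and $G_0$ each send $C_0^\infty(\inte(M)\times E)$ densities to $C_0^\infty(\inte(M)\times E)$ densities. Both $P$ and $G_0$ are built from the process run only up to the bounded time $H$: $P$ records the law of $(X_H,I_H)$ on the event $\{\tau>H\}$, and $G_0$ is the occupation measure up to $\tau\wedge H$. Since $\tau\wedge H\leq H$ is uniformly bounded, I would invoke Proposition~\ref{prop:general section Ck preservation propn} — together with the remark following it that a uniformly bounded stopping time yields $C_0^\infty$ densities — applied with the stopping time $\tau\wedge H$ (and, for $P$, with the event $A=\{\tau>H\}$, reading $P$ as $\hat S^A$ for the stopping time $H$). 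One must check that the pair $(\tau\wedge H,\{\tau>H\})$ satisfies Condition~\ref{cond:condition on tau,A for propn}: measurability with respect to $(I_t)$ and an independent probability space follows from the construction of $\tau$ via Remark~\ref{rmk:lower semicty filtration remark} (the choices in steps~\ref{enum:step 1 stopping time construction lower semicty}--\ref{enum:step 2 stopping time construction lower semicty} are made independently of $(X_t,I_t)$), and integrability is trivial since the stopping time is bounded. This gives the $G_N$ statement directly.

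For the existence, uniqueness, and smoothness of $\omega$, the key structural fact I would establish is a Doeblin-type minorization for $S$: there exist $c_0>0$ and a fixed measure $\nu_0$ with a $C_0^\infty(\inte(M)\times E)$ density such that $S((x,i),\cdot)\geq c_0\nu_0(\cdot)$ for \emph{all} $(x,i)\in M\times E$. To see this, unwind one step of $\tau$. Starting from any $(x,i)$, with probability $\psi^k((x,i))$ we select index $k$; by the finite cover property at least one $k$ with $(x,i)\in B(x_k,\delta_k/2)\times\{i_k\}$ is selected with probability bounded below. Conditioned on that selection, by Observation~\ref{observation:tilde Sk maps to ball lower semicty} the event $\tilde A_k$ carries the process into $B(x^\ast,r)\times\{\xi_1\}$ with probability at least $\tilde c_k$; and then, on the subsequent interval of length $\epsilon$, the event $\hat A$ occurs with positive probability and, by Lemma~\ref{lem:lower semicty pf bar S doeblin Cinfty} (i.e.\ Proposition~\ref{prop:general section smooth density propn}), the resulting law dominates $\hat c_0\nu$ with $\nu$ having a $C_0^\infty$ density. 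Composing these bounds over the finitely many possible $k$ and taking $\nu_0$ to be (a renormalization of) $\nu$, one obtains the claimed uniform minorization with a $C_0^\infty(\inte(M)\times E)$ reference measure.

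From the minorization the conclusions about $\omega$ follow by standard arguments. Uniform Doeblin implies $S$ is uniformly ergodic, hence has a unique stationary distribution $\omega$, obtainable as the limit of $\mu S^n$ for any $\mu$. Writing $\omega=\omega S\geq c_0\nu_0 + (1-c_0)(\text{something})$ — more precisely, iterating $\omega = \omega S$ and using $S((x,i),\cdot)=c_0\nu_0(\cdot)+(1-c_0)R((x,i),\cdot)$ for a Markov kernel $R$, one gets $\omega=\sum_{n\geq 0}(1-c_0)^n c_0\,(\omega R^n)\nu_0$-type expansions; but more directly, $\omega S = \omega$ together with $S\geq c_0\nu_0$ shows $\omega\geq c_0\nu_0$, and then applying $S$ again and using the $G_N$-style smoothing — here the relevant fact is that $S$ itself, being $\hat S$ composed after bounded-time dynamics, maps any probability measure to a measure with a $C_0^\infty(\inte(M)\times E)$ density (this is again Proposition~\ref{prop:general section smooth density propn} / Proposition~\ref{prop:general section Ck preservation propn} applied to the bounded stopping time $\tau\wedge$ its bound, noting $\tau$ restricted to one ``block'' is bounded by $H$... ). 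Actually the cleanest route: $\tau$ is bounded on the event that it terminates in the first block, and in general $S=\sum_n P^n S_0$ with $S_0$ and $P$ each smoothing by the bounded-time argument, so $\omega=\omega S$ has a $C_0^\infty(\inte(M)\times E)$ density as a (dominated-convergence-justified) sum of such densities, with the geometric tail $\Pm(\tau>nH)\leq (1-c)^n$ providing summability.

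I expect the main obstacle to be the bookkeeping in the Doeblin minorization: one must carefully track that the composition ``select $k$, then realize $\tilde A_k$, then realize $\hat A$'' fits the framework of the propositions (in particular that the relevant stopping times and events satisfy Condition~\ref{cond:condition on tau,A for propn} and that the submersion hypothesis of Proposition~\ref{prop:general section smooth density propn} holds along the concatenated trajectory), and that the lower bound $\hat c_0\nu$ from Lemma~\ref{lem:lower semicty pf bar S doeblin Cinfty} survives being pushed through the intermediate kernels $\tilde S_k$ and being averaged over the finite cover — this requires the intermediate kernels to have enough mass landing in $B(x^\ast,r)\times\{\xi_1\}$, which is exactly Observation~\ref{observation:tilde Sk maps to ball lower semicty}. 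The smoothness of $\omega$ then requires justifying the interchange of the infinite sum $\sum_n P^n S_0$ with differentiation, for which the exponential decay of $\Pm_{(x,i)}(\tau>nH)$ (from the uniform lower bound on termination probability in each block) combined with the uniform $C^k$ bounds of Proposition~\ref{prop:general section Ck preservation propn} suffices.
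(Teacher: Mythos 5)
Your proposal follows the same overall architecture as the paper (the decomposition $S=\sum_{n}P^nS_0$, a Doeblin minorization built from Lemma \ref{lem:lower semicty pf bar S doeblin Cinfty} and Observation \ref{observation:tilde Sk maps to ball lower semicty}, and smoothness-preservation via Proposition \ref{prop:general section Ck preservation propn}), but two steps as written do not go through.

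First, you apply Proposition \ref{prop:general section Ck preservation propn} directly to the pair $(\tau\wedge H,\{\tau>H\})$ (and to the occupation measure up to $\tau\wedge H$), asserting that Condition \ref{cond:condition on tau,A for propn} holds because the random choices are made independently of $(X_t,I_t)$. This is false: in step \ref{enum:step 1 stopping time construction lower semicty} the index $k$ is chosen with probability $\psi^k((X_{nH},I_{nH}))$, so $\tau$ is a function of the spatial position and is \emph{not} measurable with respect to $(I_t)_{t\geq 0}$ together with a probability space independent of $\sigma((X_t,I_t)_{t\geq0})$. The proof of Proposition \ref{prop:general section Ck preservation propn} genuinely needs this independence: the identity $\mu\hat S^A=\expE[(\varphi^{\theta}_{\tau})_{\#}\mu\,\Ind_A\,\Ind(I^{\theta}_{\tau}=j)]\otimes\delta_j$ breaks down if $\tau$ depends on $X_0$. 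The paper's fix is to first decompose $P=\sum_kR_kP_k$ and $G_0=\sum_kR_kG_{0,k}$: the kernel $R_k$ (multiplication of the density by $\psi^k$) absorbs the position-dependent selection and trivially preserves $C_0^{\infty}$, while conditional on $k$ having been selected the stopping time $\tau_k$ depends only on $(I_t)$ and independent coins, so Proposition \ref{prop:general section Ck preservation propn} legitimately applies to each $P_k$ and $G_{0,k}$.

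Second, for the smoothness of $\omega$, your ``cleanest route'' asserts that $P$ and $S_0$ each smooth arbitrary measures by the bounded-time argument. $P$ does not smooth: applied to a point mass it retains an atom (the event of no jump up to time $H$ has positive probability), and Proposition \ref{prop:general section Ck preservation propn} only \emph{preserves} smoothness of an already-smooth input, it never creates it. The creation of smoothness comes solely from the terminal $\hat S$ factor in $S_{0,k}=\tilde S_k\hat S$, via Proposition \ref{prop:general section smooth density propn} and Lemma \ref{lem:lower semicty pf bar S doeblin Cinfty}; your Doeblin paragraph has this mechanism right, but your citation of Proposition \ref{prop:general section Ck preservation propn} for it is wrong. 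The paper then writes $\omega=\omega S=\sum_k\bigl[\bigl(\sum_{n}\omega P^n\bigr)R_k\bigr]S_{0,k}$: the infinite sum is taken at the level of positive measures (finite in total mass by the geometric tail) \emph{before} the smoothing kernel $S_{0,k}$ is applied, so no interchange of summation with differentiation is required. The alternative you sketch (uniform $C^k$ bounds on each term plus geometric decay of $\Pm(\tau>nH)$) can be repaired, but only once the smoothness of each $(\omega P^n)S_0$ is attributed to the smoothing action of $S_0$ on arbitrary measures rather than to any property of $P$.
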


Before proving Lemma \ref{lem:lemma for lower semicty theorem}, we show how it provides for Theorem \ref{theo:lower semicts stat density}. 

We firstly apply Proposition \ref{prop:relation pi tilde with omega tilde G tilde} to see that
\begin{equation}\label{eq:relationship between pi and G lower semicty proof}
\pi^Q \expE_{\omega}[\tau]=\omega G.
\end{equation}
We now observe that
\[
\lvert\lvert\omega G-\omega G_N\rvert\rvert_{\TV}=\Big\lvert\Big\lvert \expE_{\omega}\Big[\int_{\tau\wedge (N+1)H}^{\tau}\delta_{(X_s,I_s)}(\cdot)ds\Big]\Big\rvert\Big\rvert_{\TV}=\expE_{\omega}[\tau-\tau\wedge (N+1)H]\ra 0\quad\text{as}\quad N\ra\infty,
\]
so that
\begin{equation}\label{eq:convergence of omega GN to omega G lower semicty proof}
\omega G_N\ra \omega G\quad\text{in total variation.}
\end{equation}

It follows from Lemma \ref{lem:lemma for lower semicty theorem} that $\omega G_N$ has a $C_0^{\infty}(\inte(M)\times E)$ density for all $N<\infty$, which we denote as $u_N$. We claim that $(u_N)_{N\in \Nm}$ is a pointwise non-decreasing sequence of continuous functions. We fix $1\leq N_1<N_2<\infty$. We observe that $\omega G_{N_1}\leq \omega G_{N_2}$ by construction. We now define $A':=\{x:u_{N_2}(x)<u_{N_1}(x)\}$. We observe that
\[
0\leq \omega G_{N_2}(A')-\omega G_{N_1}(A')=\int_{A'}(u_{N_2}-u_{N_1})((x,i))\Leb(d(x,i))\leq 0,
\]
from which we conclude that $\Leb(A')=0$. Since $u_{N_1}$ and $u_{N_2}$ are continuous, $A'=\emptyset$.

We may therefore define $u$ to be the pointwise limit
\[
u((x,i)):=\lim_{N\ra\infty}u_N((x,i))\in [0,\infty],\quad (x,i)\in M\times E,
\]
which must be lower semicontinuous (and measurable). We now fix arbitrary measurable $A''\subseteq \inte(M)\times E$. Using the monotone convergence theorem and \eqref{eq:convergence of omega GN to omega G lower semicty proof} we calculate
\[
\omega G(A'')=\lim_{N\ra\infty}\omega G_N(A'')=\lim_{N\ra \infty}\int_{A''}u_N((x,i))\Leb(d(x,i))=\int_{A''}u((x,i))\Leb(d(x,i)).
\]
It therefore follows that $\omega G$ is absolutely continuous with respect to Lebesgue, with density given by the lower semicontinuous function $u$.

We have left only to prove Lemma \ref{lem:lemma for lower semicty theorem}.
\begin{proof}[Proof of Lemma \ref{lem:lemma for lower semicty theorem}]
We firstly define for $1\leq k\leq m$,
\[
\begin{split}
A_k:=\{\tau_k<\infty\}=\{\tau_k<H\},\quad P_k((x,i),\cdot):=\Pm_{(x,i)}((X_{H},I_{H})\in \cdot,A_k^c),\\ S_{0,k}((x,i),\cdot)=\Pm_{(x,i)}((X_{\tau_k},I_{\tau_k})\in \cdot,A_k),\quad G_{0,k}((x,i),\cdot):=\expE_{(x,i)}\Big[\int_0^{\tau_k\wedge H}\delta_{(X_s,I_s)}(\cdot)ds\Big].
\end{split}
\]
We then have that
\[
S=\sum_{n=0}^{\infty}P^nS_0,\quad S_0=\sum_{k=1}^mR_kS_{0,k}\quad\text{and}\quad S_{0,k}=\tilde{S}_k\hat{S},\quad 1\leq k\leq m.
\]
We then have the following:
\begin{enumerate}
\item
We suppose that $\mu\in\calP(B(x_k,\delta_k)\times \{i_k\})$ for some $1\leq k\leq m$. It follows from Lemma \ref{lem:lower semicty pf bar S doeblin Cinfty} and Observation \ref{observation:tilde Sk maps to ball lower semicty} that there exists $c_0>0$ and $\nu\in \calP(M\times E)$, both of which are not dependent upon either $\mu$ or $k$, such that: 
\begin{enumerate}
\item
$\mu S_{0,k}\geq c_0\nu$;
\item
$\mu S_{0,k}$ has a $C_0^{\infty}(\inte(M)\times E)$ density.
\end{enumerate}
\item\label{enum:property of preserving smooth density pf of lemma for lower semicty theorem}
If $\mu\in \calP(M\times E)$ has a $C_0^{\infty}(\inte(M)\times E)$ density, then so too does $\mu P_k$ and $\mu G_{0,k}$ by Proposition \ref{prop:general section Ck preservation propn} for $1\leq k\leq m$. It is immediate that $\mu R_k$ also has a $C_0^{\infty}(\inte(M)\times E)$ density.
\end{enumerate}
Since $R_k$ only assigns mass to $B(x_k,\delta_k)\times \{i_k\}$ for $1\leq k\leq m$ and $\mu \sum_{1\leq k\leq m}R_k=\mu$ for all $\mu\in\calP(M\times E)$, we see that
\[
\mu S\geq \mu S_0\geq c_0\nu\quad\text{for all}\quad \mu\in\calP(M\times E).
\]
It therefore follows from \cite[Theorem 8.7]{Benaim2022} that $S$ has a unique stationary distribution, which we denote as $\omega$. Moroever, this stationary distribution satisfies
\[
\omega=\omega S=\sum_{k=1}^m\Big[ \Big(\sum_{n=0}^{\infty}\omega P^n\Big)R_k\Big]S_{0,k},
\]
so that $\omega$ must have a $C_0^{\infty}(\inte(M)\times E)$ density.

We now observe that
\begin{equation}\label{eq:G0 and P definition lower semicty theorem proof}
G_0=\sum_{k=1}^mR_kG_{0,k}\quad \text{and}\quad P=\sum_{k=1}^mR_kP_k.
\end{equation}
We suppose that $\mu$ has a $C_0^{\infty}(\inte(M)\times E)$ density. It follows from \eqref{eq:G0 and P definition lower semicty theorem proof} that so too does $\mu G_0$ and $\mu P$. Therefore $\mu G_N=\sum_{n\leq N}\mu P^nG_0$ must also have a $C_0^{\infty}(\inte(M)\times E)$ density for all $N<\infty$. 
\end{proof}

\section{Proof of Theorem \ref{theo:PDMPs with fast jump rates have Ck versions}}\label{section:proof of Ck density}

We fix the point $x^{\ast}\in M$ assumed to be accessible and at which the weak H\"ormander condition is satisfied. We also fix $Q_{\ast}\in \calQ_0$.

We defer for later the proof of the following proposition.

\begin{prop}\label{prop:Quantities for Ck density thm}
There exists constants $0<\delta,\epsilon,T,T'<\infty$; an open cone $Q_{\ast}\in\calQ_{\ast}\subseteq \calQ$ containing $Q_{\ast}$; an open neighbourhood $N\ni x^{\ast}$; sequence of states $\xi,\eta\in \calI$ and a non-empty open subset $U\subseteq \circdelta^{\lvert \xi\rvert}_{\epsilon}$ such that:
\begin{enumerate}
\item\label{enum:submersion on open set for correct sequence of states}
For all $x\in B(K^{Q_{\ast}},10\delta)$, the map
\begin{equation}\label{eq:submersion on U}
U\ni \unt\mapsto \Phi^{\xi}(x,\unt)\in M
\end{equation}
is a submersion. Moreover the open set $U$ satisfies
\begin{equation}\label{eq:lebesgue measure of rescaled open set intersect with simplex has pve lim inf}
\liminf_{c\ra 0}\Leb_{\circdelta_{\epsilon}^{\lvert \xi\rvert}}\Big(\frac{1}{c}U\cap\circdelta_{\epsilon}^{\lvert \xi\rvert}\Big)>0.
\end{equation}
Furthermore $\sup_j\lvert\lvert v^j\rvert \rvert_{\infty}\epsilon\leq \delta$ and $\Leb_{\circdelta_{\epsilon}^{\lvert \xi\rvert}}(\partial U)=0$.

This determines $U$, $\xi$, $\epsilon$ and $\delta$.
\item\label{enum:averaged vector field close for Q in open cone after time T}
The open cone $\calQ_{\ast}$ and constant $T<\infty$ is such that $d(\bar\varphi^{Q}_{T}(x),K^{Q_{\ast}})<\delta$ for all $x\in M$ and $Q\in \calQ_{\ast}$. Moreover $d(\bar\varphi^Q_T(x),\bar\varphi^Q_T(x^{\ast}))<\delta$ for every $x\in N$.

This determines $T$, $N$ and $Q_{\ast}$. 
\item\label{enum:access pt of propn C infty density}
For every $x\in M$, there exists $\unt\in \circdelta^{\lvert \eta\rvert}_{T'}$ such that $\Phi^{\eta}_{\unt}(x)\in N$.

This determines $\eta$ and $T'$.
\end{enumerate}
\end{prop}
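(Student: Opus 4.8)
The plan is to build the three groups of data in the order listed, since each stage uses the objects fixed in the previous one. \textbf{Stage 1 (the data $\xi$, $U$, $\epsilon$, $\delta$).} The starting point is that $Q_\ast\in\calQ_0$, so the $1$-H\"ormander condition holds at every point of the compact global attractor $K^{Q_\ast}$. Following the argument behind \cite[Theorem 4.4]{Benaim2015a} (the same argument invoked in the proof of Theorem \ref{theo:lower semicts stat density}), the $1$-H\"ormander condition at a point $x_0$ produces, for an arbitrarily short time horizon $\epsilon>0$, a sequence of states $\xi^{x_0}\in\calI$ and a point $\unt^{x_0}\in\circdelta^{|\xi^{x_0}|}_\epsilon$ at which $\unt\mapsto\Phi^{\xi^{x_0}}(x,\unt)$ is a submersion, and by continuity of the derivative this persists for $x$ in a neighbourhood of $x_0$ and $\unt$ in a neighbourhood of $\unt^{x_0}$. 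Covering the compact set $K^{Q_\ast}$ (or rather a slightly fattened version $B(K^{Q_\ast},10\delta)$ for $\delta$ small, chosen after $\epsilon$ so that $\sup_j\|v^j\|_\infty\epsilon\le\delta$) by finitely many such neighbourhoods, one would like a \emph{single} pair $(\xi,U)$ working simultaneously at every $x\in B(K^{Q_\ast},10\delta)$; this is achieved by the standard concatenation trick — take $\xi$ to be (a representative of) the concatenation of all the finitely many $\xi^{x_j}$, and on the coordinates of $\xi$ corresponding to the block that works near a given $x$, submersivity of that block forces submersivity of the whole map $\Phi^\xi$, so one lets $U$ be the (open, nonempty) union of the corresponding product sets. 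Shrinking $\epsilon$ at the outset keeps everything inside $\circdelta^{|\xi|}_\epsilon$. The measure-theoretic side conditions \eqref{eq:lebesgue measure of rescaled open set intersect with simplex has pve lim inf} and $\Leb_{\circdelta^{|\xi|}_\epsilon}(\partial U)=0$ are arranged by choosing the local neighbourhoods to be (images of) boxes or balls with the scaling point on their boundary-free interior — a set with nonempty interior containing points arbitrarily close to the vertex of the simplex in a scale-invariant cone has positive liminf of rescaled measure, and finite unions of such sets have negligible boundary.

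\textbf{Stage 2 (the cone $\calQ_\ast$ and the time $T$, $N$).} Recall $K^{Q_\ast}=\cap_{t\ge0}\bar\varphi^{Q_\ast}_t(M)$ is the global attractor of the averaged flow $\bar\varphi^{Q_\ast}$ and, crucially, it \emph{attracts $M$ uniformly}: there is $T_0<\infty$ with $d(\bar\varphi^{Q_\ast}_{T_0}(x),K^{Q_\ast})<\delta/2$ for all $x\in M$. The map $Q\mapsto\bar v^Q$ is continuous (it is a convex combination with weights $\varpi_Q(i)$, and $Q\mapsto\varpi_Q$ is continuous on $\calQ$), hence $Q\mapsto\bar\varphi^Q_{T_0}$ is continuous in the $C^0$ topology on the compact set $M$ uniformly; so for $Q$ in a small enough neighbourhood of $Q_\ast$ one keeps $d(\bar\varphi^Q_{T_0}(x),K^{Q_\ast})<\delta$ for all $x\in M$. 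Since $\calQ_0$ is a cone and all the relevant quantities are scale-invariant under $Q\mapsto\lambda Q$ (as $\varpi_{\lambda Q}=\varpi_Q$), one enlarges this neighbourhood to an open cone $\calQ_\ast$ and sets $T:=T_0$. The second clause, $d(\bar\varphi^Q_T(x),\bar\varphi^Q_T(x^\ast))<\delta$ for $x\in N$, is then just continuity of the flow in the initial condition at the single point $x^\ast$, which defines the open neighbourhood $N\ni x^\ast$ (intersecting over the finitely many $Q$ is unnecessary — one uses uniform continuity of $(Q,x)\mapsto\bar\varphi^Q_T(x)$ on $\calQ_\ast'\times M$ for a slightly smaller subcone, or shrinks $\calQ_\ast$).

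\textbf{Stage 3 (the data $\eta$, $T'$).} Now $N$ is a fixed open neighbourhood of the accessible point $x^\ast$. By accessibility (Definition \ref{defin:accessible pt}), for every $x\in M$ there is a sequence of states and positive times steering $x$ into $N$; by openness of $N$ and continuity of $\Phi$ this works for $x$ in a neighbourhood, so by compactness of $M$ finitely many such data suffice, and concatenating them (as in Stage 1, padding unused blocks with tiny times) yields a single $\eta\in\calI$ and a uniform time bound $T'<\infty$ with $\Phi^\eta_{\unt}(x)\in N$ for some $\unt\in\circdelta^{|\eta|}_{T'}$, for every $x\in M$. One must take a little care that the ``padding'' — inserting short excursions for the blocks not currently in use — does not push the trajectory out of $N$; this is handled by taking the padding times small relative to the modulus of continuity of the flows, exactly the kind of estimate already used to justify that $M$ is forward invariant under $\bar\varphi^Q$.

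\textbf{Main obstacle.} The genuinely delicate point is Stage 1: producing one sequence of states $\xi$ and one open parameter set $U$ that simultaneously makes $\Phi^\xi(x,\cdot)$ a submersion for \emph{all} $x$ in a neighbourhood of the attractor, \emph{while} retaining the scale-invariance property \eqref{eq:lebesgue measure of rescaled open set intersect with simplex has pve lim inf} and the short-horizon constraint $\sup_j\|v^j\|_\infty\epsilon\le\delta$. The submersion property from $1$-H\"ormander is only local in both $x$ and $\unt$, so the concatenation/covering argument must be set up carefully so that the ``active block'' structure of $\xi$ is compatible across overlapping neighbourhoods — this is why a single $\xi$ (rather than a finite family) is needed, and why $U$ ends up being a union over the cover. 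The remaining stages are soft compactness-and-continuity arguments. I would present Stage 1 in full detail and treat Stages 2 and 3 more briskly.
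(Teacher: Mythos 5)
Your Stages 2 and 3 match the paper's argument (uniform attraction of $K^{Q_\ast}$ plus continuity of $Q\mapsto\varpi_Q$ and scale-invariance under $Q\mapsto\lambda Q$ for Stage 2; accessibility, compactness and concatenation with small padding times for Stage 3). The problem is Stage 1, where you have correctly identified the delicate point but your proposed mechanism for resolving it does not work.

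The two requirements on $U$ --- that $\Phi^{\xi}(x,\cdot)$ be a submersion on $U$, and that $U$ satisfy \eqref{eq:lebesgue measure of rescaled open set intersect with simplex has pve lim inf} --- are in direct tension, and your argument supplies each separately without ever producing a single set satisfying both. The scaling condition forces $U$ to occupy a positive fraction of the simplex at every small scale near the vertex $\unt=0$ (note $\Leb(\tfrac1c U\cap\circdelta^{\ell}_\epsilon)=c^{-\ell}\Leb(U\cap\circdelta^{\ell}_{c\epsilon})$, so $U$ must reach down to the origin conically). But the submersion you extract from the proof of \cite[Theorem 4.4]{Benaim2015a} holds at some fixed interior point $\unt^{x_0}$ of the simplex, and continuity of the derivative only extends it to a neighbourhood of that point, which is bounded away from $\unt=0$; for such a $U$ the rescaled set $\tfrac1c U$ eventually misses $\circdelta^{\ell}_\epsilon$ entirely and the liminf is $0$. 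Worse, your route never uses the $1$-H\"ormander hypothesis as opposed to the weak H\"ormander condition --- if it worked, it would prove Conjecture \ref{conj:weak Hormander sufficient}, which the authors explicitly leave open (Remark \ref{rmk:generalisation of bracket condition} states that Part \ref{enum:submersion on open set for correct sequence of states} is precisely where the stronger hypothesis is needed).

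What the paper actually does for Part \ref{enum:submersion on open set for correct sequence of states} is an inductive asymptotic expansion of $\Phi^{\xi}$ \emph{at} the vertex of the simplex, not a continuity argument at an interior point. For a single first bracket one takes $\xi=(1,2,1)$, the conical sets $U_\gamma=\{(s,t,u)\in\circdelta^{3}_\epsilon:s,u\le\gamma t\}$ and the constant-coefficient operator $L=\partial_s-\partial_u$, and computes directly that $L\Phi^{\xi}(x,\unt)=t\bigl([v^1,v^2](x)+o_{\gamma\vee|\unt|}(1)\bigr)$ uniformly in $x$ over a compact set; the degenerating factor $t$ is harmless for the submersion property while the set $U_\gamma$ is genuinely conical, so \eqref{eq:lebesgue measure of rescaled open set intersect with simplex has pve lim inf} holds. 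A concatenation lemma (products of conical sets, block-diagonal operators) then upgrades this from a single bracket to the whole family $\calV_1$, and uniformity in $x$ comes from the uniformity of the expansion rather than from a covering of $K^{Q_\ast}$. This explicit commutator expansion on a conical neighbourhood of the vertex is the missing idea in your proposal, and it is exactly the step that breaks for higher-order brackets.
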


\begin{rmk}\label{rmk:generalisation of bracket condition}
Recall that $\calQ_0$ is defined to be those $Q\in\calQ$ at which the $1$-H\"ormander condition is satisfied everywhere on $K^Q$. This condition is used only to establish the existence of $U$, $\xi$, $\epsilon>0$ and $\delta>0$ satisfying Part \ref{enum:submersion on open set for correct sequence of states} of Proposition \ref{prop:Quantities for Ck density thm}. Therefore in Theorem \ref{theo:PDMPs with fast jump rates have Ck versions}, the assumption that the $1$-H\"ormander condition holds everywhere on $K^Q$ can be replaced by any other assumption providing for Part \ref{enum:submersion on open set for correct sequence of states} of Proposition \ref{prop:Quantities for Ck density thm}. We conjecture that the weak H\"ormander condition holding at every $x\in K^Q$ would provide for Part \ref{enum:submersion on open set for correct sequence of states} of Proposition \ref{prop:Quantities for Ck density thm}, implying Conjecture \ref{conj:weak Hormander sufficient}.
\end{rmk}

The above proposition determines $\delta$, $\epsilon$, $T$, $T'$, $\calQ_{\ast}$, $\xi$, $\eta$ and $U$, which are henceforth fixed throughout. We further define $H=T'+T+\epsilon$. It therefore fixes the definition of the set $\Xi$ defined by the following,
\begin{equation}
\Xi:=\Xi_{T,\delta,Q^{\ast}}:=\{(\uni,\unt):\uni\in \calI,\unt\in \Delta_T^{\lvert \uni\rvert},\Phi^{\uni}(x,\unt)\in B(K^{Q^{\ast}},2\delta)\quad\text{for all}\quad x\in M\}.
\end{equation}
Furthermore, we henceforth fix $P^{\min}_{\xi}\in C_c^{\infty}(U;[0,1))\setminus \{0\}$. Note that $P^{\min}_{\xi}$ is allowed to be $0$ somewhere, but it may not be $0$ everywhere, nor may it be $1$ anywhere.

We define
\[
\calI_{\xi,r}:=\{\uni\in \calI_r:(i_{r-l+1},\ldots,i_r)=\xi\}, \quad\calI_{\xi}:=\cup_{r\geq 0}\calI_{\xi,r}.
\]
Then for every $r\in \Nm$ and every $\uni\in \calI_{\xi,r}$ we define the following open subset of $\circdelta^r_{\epsilon}$,
\begin{equation}\label{eq:defn of set Ui}
\begin{split}
U_{\uni}:=\{\unt=(t_1,\ldots,t_r)\in \circdelta^r_{\epsilon}:(t_{r-\ell+1},\ldots,t_r)\in U,\\(t_{\ell_1+1},\ldots,
t_{\ell_1+\ell})\notin \bar U\quad\text{whenever}\quad (i_{\ell_1+1},\ldots,t_{\ell_1+\ell})=\xi\quad\text{and}\quad \ell_1+\ell<r\}.
\end{split}
\end{equation}

We note that $U_{\xi}=U$.

The notion of a ``good stopping time'' is then determined by a choice of:
\begin{enumerate}
\item
A finite subset $\calI_0$ of $\calI_{\xi}$ containing $\xi$, $\xi\in\calI_0\subseteq \calI_{\xi}$.
\item
For each $\uni\in \calI_0$, some $p_{\uni}\in C_c^{\infty}(U_{\uni};[0,1))$, with the requirement that $p_{\xi}\geq P^{\min}_{\xi}$ (so that $p_{\xi}\neq 0$).
\end{enumerate}
Once we have chosen such a finite subset $\calI_0$ and choices of smooth functions $(p_i)_{i\in \calI_0}$, the definition of a ``good stopping time'' is given by the following. 
\begin{defin}[Good stopping time]
The good stopping time $\tau$ is defined inductively as follows. We investigate whether $\tau_n\in [nH,(n+1)H)$ (starting with $n=0$). For each interval $[nH,(n+1)H)$, we proceed in the following three steps:
\begin{enumerate}
\item
We examine whether $\eta$ appears as a non-terminal subsequence of $I_{[nH,nH+T')}$. In particular, we define the stopping time
\begin{equation}\label{eq:defn of stopping time t0}
\begin{split}
t_0:=\inf\{t>nH:I_t\neq I_{t-}\text{ and there exists $nH\leq t'<t$}\\
\text{such that $I_{[t',t)}=\eta$ with $I_{t'}\neq I_{t'-}$}\}<nH+T'.
\end{split}
\end{equation}
If $t_0<nH+T'$, we proceed to the next step. If $t_0\geq nH+T'$, on the other hand, then we determine that $\tau\geq (n+1)H$.

If $\eta$ does not appear as a subsequence of $I_{[nH,nH+T')}$, then we determine that $\tau\geq (n+1)H$.
\item
Having reached this step, necessarily we have that $t_0\in [nH,nH+T')$. We investigate whether 
\[
(I_{[t_0,t_0+T]},\calT_{[t_0,t_0+T]})\in \Xi.
\]
If not, we determine that $\tau\geq (n+1)H$. If so, we then proceed to the final step.
\item
We take $t'\in [t_0+T,t_0+T+\epsilon)$ such that $I_{[t_0+T,t')}\in \calI_0$, $I_{t'}\neq I_{t'-}$ and $\calT_{[t_0+T,t')}\in U_{I_{[t_0+T,t')}}$, if such a $t'$ exists (note that if such a $t'$ exists, it must be unique by the definition of $U_{\uni}$, \eqref{eq:defn of set Ui}). If such a $t'$ does not exist, then we determine that $\tau\geq (n+1)H$. If such a $t'$ does exist, we write $\uni=(i_1,\ldots,i_{\ell})=I_{[t_0+T,t')}$ and $\unt=(t_1,\ldots,t_{\ell})=\calT_{[t_0+T,t')}$. 

We then set $\tau=t'$ with probability $p_{\uni}(\unt)$, otherwise $\tau\geq (n+1)H$.
\end{enumerate}

If it is determined that $\tau\geq (n+1)H$, we repeat the above induction step. We proceed inductively in $n$ until the value of $\tau$ is determined. It is easy to see that this induction procedure must terminate, so that $\tau$ must be finite.
\end{defin}
Given a good stopping time $\tau$, we may define the following kernels,
\begin{equation}
S((x,i),\cdot):=\Pm_{(x,i)}((X_{\tau},I_{\tau})\in \cdot)\quad\text{and}\quad G((x,i),\cdot):=\expE_{(x,i)}\Big[\int_0^{\tau}\delta_{(X_s,I_s)}(\cdot)\Big].
\end{equation}
Using Proposition \ref{prop:general section smooth density propn}, we shall establish the following.
\begin{prop}\label{prop:epsilon xi for good stopping time}
For any good stopping time $\tau$, the kernel $S$ satisfies Doeblin's criterion: there exists $c_0>0$ and $\nu\in\calP(M\times E)$ such that
\[
S((x,i),\cdot)\geq c_0\nu(\cdot)\quad \text{for all}\quad (x,i)\in M\times E.
\]
Moreover $\mu S$ has a $C^{\infty}_c(\inte(M)\times E)$ density with respect to Lebesgue measure for all $\mu\in\calP(M)$.
\end{prop}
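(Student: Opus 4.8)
The plan is to decompose the good stopping time $\tau$ into the three-stage structure built into its definition and to apply Proposition \ref{prop:general section smooth density propn} at the final stage. First I would reduce to the event that the process stops within the first window $[0,H)$: since by construction the probability of stopping in any given window $[nH,(n+1)H)$, conditioned on not having stopped before, is bounded below uniformly in the starting configuration (this is exactly the content of the "it is easy to see that this induction procedure must terminate'' remark, made quantitative), it suffices to produce $c_0>0$ and $\nu$ such that $\Pm_{(x,i)}((X_\tau,I_\tau)\in\cdot,\ \tau<H)\geq c_0\nu(\cdot)$ for all $(x,i)$. Write $S_0((x,i),\cdot):=\Pm_{(x,i)}((X_\tau,I_\tau)\in\cdot,\ \tau<H)$; then $S=\sum_{n\geq 0}P^n S_0$ where $P((x,i),\cdot):=\Pm_{(x,i)}((X_H,I_H)\in\cdot,\ \tau>H)$, and $S\geq S_0$, so the Doeblin bound for $S_0$ gives it for $S$, and the smoothness of $\mu S_0$ will be propagated to $\mu S$ by Proposition \ref{prop:general section Ck preservation propn} applied to the bounded stopping times $\tau\wedge H$ (here using that $S_0$ acts by a uniformly bounded stopping time, so the constant $C_k$ issue does not arise).

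Next I would unpack $S_0$ along the three stages. Stage~1 (reach the neighbourhood $N\ni x^\ast$ via the word $\eta$ in time $<T'$): by Part~\ref{enum:access pt of propn C infty density} of Proposition \ref{prop:Quantities for Ck density thm}, for every $x\in M$ there is a positive-probability set of occupation times driving $x$ into $N$ along $\eta$, and by compactness of $M$ and continuity of $\Phi^\eta$ one gets a uniform lower bound: there is $c_1>0$ such that, from any $(x,i)$, with probability at least $c_1$ the first stage succeeds and lands $X_{t_0}$ in $N$. Stage~2 (the averaging stage, word in $\Xi$ over time exactly $T$): the point of the set $\Xi$ and Part~\ref{enum:averaged vector field close for Q in open cone after time T} is that for $Q\in\calQ_\ast$ one can, with probability bounded below (uniformly), realise a switching sequence whose flow over time $T$ approximates $\bar\varphi^Q_T$, hence lands inside $B(K^{Q_\ast},2\delta)$; concretely, by a law-of-large-numbers/continuity argument for fast switching (this is where $\calQ_\ast$ being an open cone and the averaged flow bringing all of $M$ into $B(K^{Q_\ast},\delta)$ after time $T$ is used), there is $c_2>0$ with $\Pm((I_{[t_0,t_0+T]},\calT_{[t_0,t_0+T]})\in\Xi)\geq c_2$, and on this event $X_{t_0+T}\in B(K^{Q_\ast},2\delta)\subseteq B(K^{Q_\ast},10\delta)$. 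Stage~3 (the smoothing stage): starting from $X_{t_0+T}\in B(K^{Q_\ast},10\delta)$, one observes the word $\xi$ with occupation times in $U\subseteq\circdelta^{|\xi|}_\epsilon$ and accepts with probability $p_\xi\geq P^{\min}_\xi$; since $P^{\min}_\xi\in C_c^\infty(U;[0,1))\setminus\{0\}$ and, by Part~\ref{enum:submersion on open set for correct sequence of states}, $\unt\mapsto\Phi^\xi(x,\unt)$ is a submersion on a neighbourhood of $\mathrm{spt}(P^{\min}_\xi)$ for every such $x$, Proposition \ref{prop:general section smooth density propn} applies directly: the push-forward has a $C_c^\infty(\inte(M)\times E)$ density and, locally near any chosen $(x',\unt',j_\ast)$ with $P^{\min}_\xi(\unt')>0$ and $Q_{\xi_{|\xi|}j_\ast}>0$, dominates a positive multiple of Lebesgue measure on a ball.

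Putting the stages together: the composition of the three stages acts on any initial $\mu\in\calP(M\times E)$, produces (by Proposition \ref{prop:general section smooth density propn} at Stage~3) a measure with a $C_c^\infty(\inte(M)\times E)$ density, and the mass surviving all three stages is at least $c_1 c_2 c_3$ for some $c_3>0$ coming from Stage~3 and the acceptance probability; moreover the Stage~3 lower bound gives a fixed $\nu=\Leb|_{B(y^\ast,r)}\otimes\delta_{j_\ast}$ (with $y^\ast=\Phi^\xi(x^\ast$-image$,\unt')$) such that $S_0((x,i),\cdot)\geq c_0\nu$ with $c_0=c_1c_2c_0'$ uniformly in $(x,i)$. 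This is the Doeblin bound; the smoothness of $\mu S$ follows as above. I expect the main obstacle to be Stage~2: making rigorous the uniform-in-$(x,i)$, uniform-in-$Q\in\calQ_\ast$ lower bound that a fast-switching path can track the averaged flow $\bar\varphi^Q_T$ closely enough to land in $\Xi$ — i.e. proving that $\Xi$ is "large enough'' to be hit with probability bounded away from zero. This requires a quantitative averaging estimate (a Wasserstein or sup-norm comparison between the switched flow and $\bar\varphi^Q$ over $[0,T]$, valid uniformly as the jump rate is large), together with the open-cone structure of $\calQ_\ast$ to absorb the dependence on $Q$; the other two stages are comparatively routine compactness-and-continuity arguments once Proposition \ref{prop:Quantities for Ck density thm} is in hand.
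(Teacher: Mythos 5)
Your decomposition is exactly the paper's: write $S=\bigl(\sum_{n\geq 0}P^n\bigr)S_0$, note $S\geq S_0$ and that $\mu\sum_n P^n$ has finite mass (so smoothness of $\mu S$ follows by applying the $S_0$-result to $\mu\sum_n P^n$ — you do not need Proposition \ref{prop:general section Ck preservation propn} here, and indeed it would not apply since a general $\mu\in\calP(M\times E)$ has no density), then run the three stages and invoke Proposition \ref{prop:general section smooth density propn} at the last one. Two corrections, though. First, you have misidentified the difficulty in Stage 2: for this proposition the rate matrix is \emph{fixed} and one only needs the event $(I_{[t_0,t_0+T]},\calT_{[t_0,t_0+T]})\in\Xi$ to have probability bounded away from zero, not close to one. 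Since $\Xi$ contains a non-empty open set of switching patterns (those whose occupation times track $\varpi_Q$ closely enough that the concatenated flow uniformly approximates $\bar\varphi^Q_T$, which lands $M$ in $B(K^{Q_\ast},\delta)$ by Part \ref{enum:averaged vector field close for Q in open cone after time T}), and all off-diagonal entries of $Q$ are positive, this is a soft full-support argument; the quantitative, uniform-in-fast-$Q$ averaging estimate you flag as the main obstacle is only needed later, in the proof of Theorem \ref{theo:PDMPs with fast jump rates have Ck versions}, to verify \eqref{eq:bound required on good stopping time for preservation of Ck by G}. Second, for the minorization you need the law of $X_{t_0+T}$ on the success event to be concentrated in a small ball around a \emph{fixed} point, so that Proposition \ref{prop:general section smooth density propn} returns a fixed target ball; landing in $B(K^{Q_\ast},2\delta)$ is not enough. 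This is supplied by the second clause of Part \ref{enum:averaged vector field close for Q in open cone after time T}, $d(\bar\varphi^Q_T(x),\bar\varphi^Q_T(x^\ast))<\delta$ for $x\in N$, which is why Stage 1 must land in $N$ and Stage 2 then concentrates near $\bar\varphi^Q_T(x^\ast)$; your concluding paragraph uses the right $\nu$ but skips this justification.
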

\begin{proof}[Proof of Proposition \ref{prop:epsilon xi for good stopping time}]
We define the kernels
\[
S_0((x,i),\cdot):=\Pm_{(x,i)}((X_{\tau},I_{\tau})\in \cdot,\tau<H)\quad\text{and}\quad P((x,i),\cdot):=\Pm_{(x,i)}((X_{H},I_{H})\in \cdot,\tau>H).
\]
We have that 
\[
S=(\sum_{n=0}^{\infty}P^n)S_0.
\]
Note that the mass of $\mu\sum_{n=0}^{\infty}P^n$ in bounded by the expected value of $\tau$, which is stochastically dominated by a geometric random variable, so that $\mu\sum_{n=0}^{\infty}P^n$ has finite mass. Note also that $\mu S\geq \mu S_0$. It therefore suffices to establish Proposition \ref{prop:epsilon xi for good stopping time} with $S$ replaced by $S_0$.

By Part \ref{enum:access pt of propn C infty density} of Proposition \ref{prop:Quantities for Ck density thm}, we have that
\[
\inf_{(x,i)\in M\times E}\Pm_{(x,i)}(t_0<T',(X_{t_0},I_{t_0})\in (N\cap M)\times \{1\})>0.
\]
We recall from  Part \ref{enum:averaged vector field close for Q in open cone after time T} of Proposition \ref{prop:Quantities for Ck density thm} that $d(\bar\varphi^{Q}_T(x),\bar\varphi^Q_T(x^{\ast}))<\delta$ for all $x\in N\cap M$ and $Q\in \calQ_{\ast}$. We therefore have that
\[
\inf_{(x,i)\in M\times E}\Pm_{(x,i)}(t_0<T',(X_{t_0+T},I_{t_0+T})\in (B(\bar{\varphi}_T^Q(x^{\ast}),2\delta)\cap M)\times \{\xi_1\})>0.
\]
Moreover, by construction, if $X_{t_0+T}\notin B(K^{Q^{\ast}},2\delta)$ then $\tau\geq H$. We may then apply Proposition \ref{prop:general section smooth density propn} (using the fact that $\calI_0$ is a finite set) to obtain Proposition \ref{prop:epsilon xi for good stopping time} with $S$ replaced by $S_0$.

\end{proof}

Therefore $S$ has a unique stationary distribution, $\omega$. Since $\omega$ satisfies
\[
\omega=\omega S,
\]
it follows that $\omega$ has a $C_c^{\infty}(\inte(M)\times E)$ density. We may apply Proposition \ref{prop:relation pi tilde with omega tilde G tilde} to see that
\begin{equation}
\pi\expE_{\omega}[\tau]=\omega G,
\end{equation}
for any good stopping time $\tau$. Thus if $\omega G$ has a $C^k_0(\inte(M)\times E)$ density, so too does $\pi$. Therefore, Proposition \ref{prop:general section Ck preservation propn} provides for constants $(C_k)_{k=0}^{\infty}$ such that $\pi$ has a $C^k_0(\inte(M)\times E)$ version whenever
\begin{equation}\label{eq:bound required on good stopping time for preservation of Ck by G}
\sup_{i\in E}\expE_i[C_k^{\tau}]<\infty.
\end{equation}
It therefore suffices to show that a good stopping time $\tau$ may be chosen satisfying \eqref{eq:bound required on good stopping time for preservation of Ck by G}. We will establish that such a stopping time may be chosen whenever the rate matrix is ``sufficiently fast'', essentially by application of the infinite monkey theorem.

\begin{rmk}
We adopt the convention that a geometric distribution, $\Geom(q)$, is supported on $\{1,2,\ldots\}$, so is equal to $1$ with probability $q$.
\end{rmk}

We observe that if $\inf_{i\in E}\Pm_i(\tau<H\lvert I_0=i)\geq q$, then $\tau$ is stochastically dominated by $H\text{Geom}(q)$.

In the following, when we say that a statement holds ``for all $Q$ fast enough'', we mean that there exists some locally bounded function $\Lambda:\calQ_{\ast}\ra \Rm_{>0}$ such that the statement holds for $\lambda Q$, whenever $\lambda \geq \Lambda(Q)$.

We fix $\gamma>0$. We have that $\eta$ appears as a non-terminal subsequence of $I_{[0,T')}$ (that is, $t_0<T'$) with probability at least $1-\gamma$, for all $Q$ fast enough.

We have from Part \ref{enum:averaged vector field close for Q in open cone after time T} of Proposition \ref{prop:Quantities for Ck density thm} that $d(\bar\varphi^{Q}_T(x),K^{Q_{\ast}})<\delta$ for all $Q\in\calQ_{\ast}$. Moreover we have that
\[
\Pm(\sup_{x\in M}d(\Phi^{I_{[t_0,t_0+T]}}(x,\calT_{[t_0,t_0+T]}),\bar\varphi^{Q}_T(x))<\delta)>1-\gamma\quad\text{for all $Q$ fast enough.}
\]
Therefore 
\[
(I_{[t_0,t_0+T]},\calT_{[t_0,t_0+T]})\in\xi\quad\text{with probability at least $1-\gamma$ for all $Q$ fast enough.}
\]

Following this, it is then the case by \eqref{eq:lebesgue measure of rescaled open set intersect with simplex has pve lim inf} that $\xi$ appears as a subsequence of $I_{[t_0+T,t_0+T+\epsilon)}$, with corresponding occupation times belonging to $U$, with probability at least $1-\gamma$ for all $Q$ fast enough. 

For any such $Q$ fast enough, we can therefore choose $I_0$ a finite subset of $\calI_{\xi}$ containing $\xi$, such that with probability at least $1-2\gamma$ there exists $t'\in [t_0+T,t_0+T+\epsilon)$ such that $I_{[t_0+T,t')}\in \calI_0$, $I_{t'}\neq I_{t'-}$ and $\calT_{[t_0+T,t')}\in U_{I_{[t_0+T,t')}}$. My making $p_{\uni}$ for $\uni\in \calI_0$ arbitrarily close to $\Ind_{U_{\uni}}$, we can ensure that the probability that this happens and that one then sets $\tau=t'$ is at least $1-3\gamma$.

This then ensures that for all $Q$ fast enough, there exists a good stopping time $\tau$ such that $\Pm(\tau<H\lvert I_0=i)\geq 1-5\gamma$, for any initial condition $I_0=i$. We therefore have that for any $\gamma>0$ there exists a good stopping time such that
\[
\sup_i\expE_i[C_k^{\tau}]\leq \sum_{m=1}^{\infty}(1-5\gamma)(5\gamma)^{m-1}C_k^{Hm}.
\]
Since the right hand side is finite for $\gamma>0$ small enough, there exists a good stopping time providing for \eqref{eq:bound required on good stopping time for preservation of Ck by G}.

\qed

We have left only to establish Proposition \ref{prop:Quantities for Ck density thm}.

\subsection*{Proof of Proposition \ref{prop:Quantities for Ck density thm}}

We firstly establish the existence of $U$, $\xi$, $\epsilon$ and $\delta$ providing for Part \ref{enum:submersion on open set for correct sequence of states}, before establishing the existence of $Q_{\ast}$, $T$, $N$, $\eta$ and $T'$ providing for parts \ref{enum:averaged vector field close for Q in open cone after time T} and \ref{enum:access pt of propn C infty density}.

In the following, given two sequences $a$ and $b$, we shall write $a\oplus b$ for their concatenation.

\subsection*{Part \ref{enum:submersion on open set for correct sequence of states}}

Since $\text{span}(\calV_1(x))=\Rm^d$ (recall that $\calV_1$ was defined in \eqref{eq:families of Lie brackets}) for all $x\in K^{Q_{\ast}}$, which is a compact set, there exists a bounded open neighbourhood $N\supseteq K^{Q_{\ast}}$ such that $\text{span}(\calV_1(x))=\Rm^d$ for all $x\in N$. We choose $\delta>0$ such that $B(K^{Q_{\ast}},20\delta)\subseteq N$.

We now proceed by induction. We consider the following condition on families of vector fields $\calW=\{w_1,\ldots,w_m\}$.
\begin{cond}\label{cond:condition for induction of submersion propn}
For any compact set $K\subseteq \Rm^d$ there exists:
\begin{enumerate}
\item
a sequence of states $\xi\in \calI$ of length $\ell:=\lvert\xi\rvert$;
\item
constant coefficient first-order differential operators $L^r$ ($r=1,\ldots,m$) of the form $L^r=\sum_{j=1}^{\ell}c_{rj}\partial_{t^j}$ (the $c_{rj}$ being constants);
\item
some constant $0<\epsilon<\frac{\delta}{\sup_{j}\lvert\lvert v^j \rvert\rvert_{\infty}}$ and positive functions $f_r:V\times U\ra \Rm_{>0}$ ($r=1,\ldots,m$);
\item
some family of open sets $(U_{\gamma})_{\gamma>0}$ contained in $\circdelta^{\ell}_{\epsilon}$, $U_{\gamma}\subseteq \circdelta^{\ell}_{\epsilon}$
\end{enumerate}
such that we have:
\begin{enumerate}
\item
the differential operators satisfy
\begin{equation}\label{eq:expression for diff operator applied to Phixi for induction proof}
L^r\Phi^{\xi}(x,\unt)=f_r(\unt)(w_r(x)+o_{\gamma\vee \lvert \unt\rvert}(1)),\quad\text{for all}\quad  x\in K, \; \unt\in U_{\gamma},\;1\leq r\leq m;
\end{equation}
\item
for all $\gamma>0$ the open set $U_{\gamma}$ satisfies
\begin{equation}\label{eq:positive Lebesgue lim inf for induction}
\liminf_{c\ra 0}\Leb_{\circdelta^{\ell}_{\epsilon}}(\frac{1}{c}U_{\gamma}\cap \circdelta^{\epsilon}_{\ell})>0\quad\text{and}\quad \Leb_{\circdelta^{\ell}_{\epsilon}}(\partial U_{\gamma})=0.
\end{equation}
\end{enumerate}
\end{cond}
\begin{rmk}
The $o$ in \eqref{eq:expression for diff operator applied to Phixi for induction proof} should be interpreted as being uniform over all $x\in K$. In the following, whenever we take some compact set for which we seek to prove some family of vector fields satisfied Condition \ref{cond:condition for induction of submersion propn}, any $o$ we use should be understood to be uniform over all $x$ in the given compact set.
\end{rmk}

Our goal then is to show that $\calV_1$ satisfies Condition \ref{cond:condition for induction of submersion propn}, which we apply to $K=\bar N$. We would then obtain $\xi\in\calI$ of length $\ell=\lvert \xi\rvert$, $0<\epsilon<\frac{\delta}{\sup_{j}\lvert\lvert v^j \rvert\rvert_{\infty}}$ and open subsets $(U_{\gamma})_{\gamma>0}$ of $\circdelta_{\ell}^{\epsilon}$ satisfying \eqref{eq:positive Lebesgue lim inf for induction} such that
\[
V(x)+o_{\gamma\vee \lvert \unt\rvert}(1)\in \Span(D_{\unt}\Phi^{\xi}(x,\unt))\quad\text{for all}\quad x\in K,\;\unt\in U_{\gamma},\;V\in \calV_1.
\]
Therefore $\Span(D_{\unt}\Phi^{\xi}(x,\unt))=\Rm^d$ for all $x\in B(K^{Q_{\ast}},10\delta)$ and $\unt\in U^{\gamma}$ such that $\lvert \unt\rvert\vee \gamma$ is small enough. Note that we can make $\epsilon$ as small as we like, by replacing $U_{\gamma}$ with $\circdelta_{\ell}^{\epsilon}\cap U_{\gamma}$, ensuring that $\lvert \unt\rvert$ is sufficiently small. Then taking this $\xi\in \calI$, $\delta>0$ and $U=U_{\gamma}\subseteq\circdelta_{\ell}^{\epsilon}$ for $\gamma\vee\epsilon>0$ sufficiently small, we obtain Part \ref{enum:submersion on open set for correct sequence of states} of Proposition \ref{prop:Quantities for Ck density thm}.

It therefore suffices to show that $\calV_1$ satisfies Condition \ref{cond:condition for induction of submersion propn}, which we now establish by induction.

\begin{lem}\label{lem:union of two families of vector fields satisfies submersion condition}
Suppose that $\calW^1$ and $\calW^2$ satisfies Condition \ref{cond:condition for induction of submersion propn}. Then $\calW:=\calW^1\cup\calW^2$ satisfies Condition \ref{cond:condition for induction of submersion propn}.
\end{lem}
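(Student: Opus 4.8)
The plan is to realise $\calW=\calW^1\cup\calW^2$ by \emph{concatenating} the data furnished by Condition \ref{cond:condition for induction of submersion propn} for $\calW^1$ and for $\calW^2$, differentiating $\Phi^{\xi^1\oplus\xi^2}$ in the first block of time-variables to recover the vector fields of $\calW^1$ and in the second block to recover those of $\calW^2$. Fix a compact $K\subseteq\Rm^d$. Applying Condition \ref{cond:condition for induction of submersion propn} for $\calW^1$ to $K$ yields $\xi^1\in\calI$ of length $\ell_1$, operators $L^1_r=\sum_{j=1}^{\ell_1}c^1_{rj}\partial_{t^j}$, a constant $\epsilon_1$, positive functions $f^1_r$, and open sets $(U^1_\gamma)_{\gamma>0}\subseteq\circdelta^{\ell_1}_{\epsilon_1}$; by intersecting each $U^1_\gamma$ with a smaller simplex (which preserves all the requirements of Condition \ref{cond:condition for induction of submersion propn}, as noted in the proof of Part \ref{enum:submersion on open set for correct sequence of states}) we may take $\epsilon_1<\tfrac{\delta}{2\sup_j\lvert\lvert v^j\rvert\rvert_\infty}$. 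Since $\lvert\Phi^{\xi^1}(x,\unt_1)-x\rvert\le\epsilon_1\sup_j\lvert\lvert v^j\rvert\rvert_\infty$ for $x\in K$ and $\unt_1\in\circdelta^{\ell_1}_{\epsilon_1}$, the set $K':=\overline{B}(K,\epsilon_1\sup_j\lvert\lvert v^j\rvert\rvert_\infty)$ is compact, and applying Condition \ref{cond:condition for induction of submersion propn} for $\calW^2$ to $K'$ yields $\xi^2$ of length $\ell_2$, operators $L^2_s$, $\epsilon_2<\tfrac{\delta}{2\sup_j\lvert\lvert v^j\rvert\rvert_\infty}$, positive $f^2_s$, and open $(U^2_\gamma)_{\gamma>0}\subseteq\circdelta^{\ell_2}_{\epsilon_2}$. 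I then take $\xi:=\xi^1\oplus\xi^2$ of length $\ell:=\ell_1+\ell_2$, $\epsilon:=\epsilon_1+\epsilon_2<\tfrac{\delta}{\sup_j\lvert\lvert v^j\rvert\rvert_\infty}$, $U_\gamma:=U^1_\gamma\times U^2_\gamma\subseteq\circdelta^{\ell}_{\epsilon}$, the operators $L^1_r$ unchanged (acting on the first $\ell_1$ time-coordinates), and the $L^2_s$ shifted to act on the last $\ell_2$ coordinates; these remain constant-coefficient first-order operators.

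Everything rests on the identity $\Phi^{\xi^1\oplus\xi^2}(x,(\unt_1,\unt_2))=\Phi^{\xi^2}\bigl(\Phi^{\xi^1}(x,\unt_1),\unt_2\bigr)$. Since $L^2_s$ differentiates only in $\unt_2$, which leaves the inner argument $\Phi^{\xi^1}(x,\unt_1)$ fixed, \eqref{eq:expression for diff operator applied to Phixi for induction proof} for $\calW^2$ evaluated at $\Phi^{\xi^1}(x,\unt_1)\in K'$ gives
\[
L^2_s\Phi^{\xi^1\oplus\xi^2}(x,\unt)=f^2_s(\unt_2)\Bigl(w^2_s\bigl(\Phi^{\xi^1}(x,\unt_1)\bigr)+o_{\gamma\vee\lvert\unt_2\rvert}(1)\Bigr);
\]
as $\Phi^{\xi^1}(x,\unt_1)=x+o_{\lvert\unt_1\rvert}(1)$ uniformly over $x\in K$ (boundedness of the $v^j$) and $w^2_s$ is smooth, hence uniformly continuous on $K'$, this equals $f^2_s(\unt_2)\bigl(w^2_s(x)+o_{\gamma\vee\lvert\unt\rvert}(1)\bigr)$. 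For the first block, the chain rule gives
\[
L^1_r\Phi^{\xi^1\oplus\xi^2}(x,\unt)=D_x\Phi^{\xi^2}\bigl(\Phi^{\xi^1}(x,\unt_1),\unt_2\bigr)\cdot f^1_r(\unt_1)\bigl(w^1_r(x)+o_{\gamma\vee\lvert\unt_1\rvert}(1)\bigr),
\]
where $D_x$ is the spatial Jacobian of the flow composition; by the Gr\"onwall bound of Lemma \ref{lem:derivative of flow map lemma} (the $v^j$ being compactly supported, $\sup_i\lvert\lvert Dv^i\rvert\rvert_\infty<\infty$) one has $\lvert\lvert D_x\Phi^{\xi^2}(\cdot,\unt_2)-\mathrm{Id}\rvert\rvert_\infty\to0$ as $\lvert\unt_2\rvert\to0$, and since $w^1_r$ is continuous, hence bounded on $K$, one may pull the positive factor $f^1_r(\unt_1)$ out and absorb the resulting error into $o_{\gamma\vee\lvert\unt\rvert}(1)$, obtaining $f^1_r(\unt_1)\bigl(w^1_r(x)+o_{\gamma\vee\lvert\unt\rvert}(1)\bigr)$. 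As $\unt\mapsto f^1_r(\unt_1)$ and $\unt\mapsto f^2_s(\unt_2)$ are positive on $U_\gamma$, the first clause of Condition \ref{cond:condition for induction of submersion propn} holds for $\calW$ with this data.

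There remain the two geometric conditions \eqref{eq:positive Lebesgue lim inf for induction} for $U_\gamma=U^1_\gamma\times U^2_\gamma$. The boundary satisfies $\partial U_\gamma\subseteq(\partial U^1_\gamma\times\overline{U^2_\gamma})\cup(\overline{U^1_\gamma}\times\partial U^2_\gamma)$, which is $\Leb_\ell$-null since $\Leb_{\ell_1}(\partial U^1_\gamma)=\Leb_{\ell_2}(\partial U^2_\gamma)=0$ and $\overline{U^1_\gamma},\overline{U^2_\gamma}$ are bounded. For the density at the origin, $\tfrac1cU_\gamma=\tfrac1cU^1_\gamma\times\tfrac1cU^2_\gamma$, and using $\circdelta^{\ell}_{\epsilon}\supseteq\circdelta^{\ell_1}_{\epsilon_1}\times\circdelta^{\ell_2}_{\epsilon_2}$ one gets
\[
\Leb_{\ell}\Bigl(\tfrac1cU_\gamma\cap\circdelta^{\ell}_{\epsilon}\Bigr)\ \ge\ \Leb_{\ell_1}\Bigl(\tfrac1cU^1_\gamma\cap\circdelta^{\ell_1}_{\epsilon_1}\Bigr)\,\Leb_{\ell_2}\Bigl(\tfrac1cU^2_\gamma\cap\circdelta^{\ell_2}_{\epsilon_2}\Bigr),
\]
and each factor on the right has strictly positive $\liminf$ as $c\to0$ by \eqref{eq:positive Lebesgue lim inf for induction} for $\calW^1$ and $\calW^2$, so the left side does too (a product of nonnegative quantities with positive $\liminf$s has positive $\liminf$); in particular $U_\gamma\neq\emptyset$. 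This establishes Condition \ref{cond:condition for induction of submersion propn} for $\calW=\calW^1\cup\calW^2$. The step demanding the most care is the chaining of the $o(1)$ terms: one must check that $\Phi^{\xi^1}(x,\cdot)\to\mathrm{Id}$ and $D_x\Phi^{\xi^2}(\cdot,\cdot)\to\mathrm{Id}$ as the relevant times tend to $0$ \emph{uniformly} in the base point over a compact set, so that the combined errors really are $o_{\gamma\vee\lvert\unt\rvert}(1)$ uniformly in $x\in K$ as Condition \ref{cond:condition for induction of submersion propn} requires; this is precisely where boundedness of the $v^j$ and of their derivatives (Remark \ref{rmk:vector fields are compactly supported for proof}) is used.
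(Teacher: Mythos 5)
Your proof is correct and follows essentially the same route as the paper's: concatenate $\xi^1\oplus\xi^2$, keep the first-block operators and shift the second-block ones, take $U_\gamma=U^1_\gamma\times U^2_\gamma$, and control the two blocks via the chain rule and the closeness of $D_x\Phi^{\xi^2}$ to the identity for small times. You are in fact slightly more careful than the paper in two places it glosses over — replacing $w^2_s(\Phi^{\xi^1}(x,\unt_1))$ by $w^2_s(x)+o(1)$ via uniform continuity, and verifying the product form of \eqref{eq:positive Lebesgue lim inf for induction} — but the argument is the same.
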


\begin{proof}[Proof of Lemma \ref{lem:union of two families of vector fields satisfies submersion condition}]
We fix an arbitrary compact set $K\subseteq \Rm^d$ for which we seek to verify $\calW$ satisfies Condition \ref{cond:condition for induction of submersion propn}. We then take some larger compact set $K'$ containing $K$ in its interior, $\inte(K')\supseteq K$.

Corresponding to this $K'$, we take $\xi^i$ of length $\ell_i$, $L^{i,r}=\sum_{j=1}^{\ell_i}c^i_{rj}\partial_{t^j}$, $(U^i_{\gamma})_{\gamma>0}$, $\epsilon_i$ and $f^i_r$ providing for Condition \ref{cond:condition for induction of submersion propn} to be satisfied by $\calW^i=(w^i_1,\ldots,w^i_{m_i})$, for $i=1,2$. We then define $\xi:=\xi_1\oplus\xi_2$, which is of length $\ell:=\ell^1+\ell^2$. We define the differential operators $L^r$ ($1\leq r\leq m:=m_1+m_2)$ to be
\[
L^r=\sum_{j=1}^{\ell_1}c_{rj}\partial_{t^j},\quad 1\leq r\leq m_1,\quad L^r=\sum_{j=1}^{\ell_2}c_{(r-m_1)j}\partial_{t^{\ell_1+j}},\quad m_1+1\leq r\leq m_2.
\]
We further define $\epsilon=\epsilon_1\wedge \epsilon_2\wedge \frac{d(\partial K,K')}{\sup_{1\leq j\leq n}\lvert\lvert v^j \rvert\rvert_{\infty}}$ and $U_{\gamma}:=U^1_{\gamma}\times U^2_{\gamma}$. Note that the former ensures that if $x\in K'$, then we cannot leave $K$ in time $\epsilon$. Note also that $\epsilon\leq \epsilon_1<\frac{\delta}{\sup_j\lvert\lvert v^j \rvert\rvert_{\infty}}$

For $\unt\in U$ we write $\unt=(\unt_1,\unt_2)$ whereby $\unt_1\in U_1$ and $\unt_2\in U_2$. For $1\leq r\leq m_1$ we calculate
\[
L^r\Phi^{\xi}(x,\unt)=D_x\Phi^{\xi_2}(\Phi^{\xi_1}(x,\unt_1),\unt_2)L^{1,r}\Phi^{\xi_1}(x,\unt_1)=(\text{Id}+o(\lvert \unt_2\rvert))(f^1_r(\unt_1)(w^1_r(x)+o_{\gamma_1\vee \lvert \unt_1\rvert}(1))),
\]
which is of the form \eqref{eq:expression for diff operator applied to Phixi for induction proof} when we set $f_r(\unt)=f^1_r(\unt_1)$.

We now consider $m_1+1\leq r\leq m_1+m_2$. Using the fact that $x\in K'$ implies that $\Phi^{\xi_1}(x,\unt_1)\in K$, we calculate
\[
L^r\Phi^{\xi}(x,\unt)=L^{2,r-m_1}\Phi^{\xi_2}(\Phi^{\xi_1}(x,\unt_1),\unt_2)=f^2_{r-m_1}(\unt_2)(w_{r-m_1}^2(\Phi^{\xi_1}(x,\unt_1))+o_{\gamma_2+\lvert \unt_2\rvert}(1)),
\]
which is of the form \eqref{eq:expression for diff operator applied to Phixi for induction proof} when we set $f_r(\unt)=f^2_{r-m_1}(\unt_2)$. Therefore we have \eqref{eq:expression for diff operator applied to Phixi for induction proof} for all $x\in K'$, $\unt\in U_{\gamma}$ and $v\in \calW$.

Moreover, it is immediate that $U_{\gamma}$ satisfies \eqref{eq:positive Lebesgue lim inf for induction} for all $\gamma>0$, so we are done.
\end{proof}

It is trivial that $\{v^i\}$ satisfies Condition \ref{cond:condition for induction of submersion propn} with $\xi=(i)$. We now establish that $\calV=\{[v^1,v^2]\}$ satisfies Condition \ref{cond:condition for induction of submersion propn}. 
\begin{proof}
We fix the compact set $K$ for which we seek to verify Condition \ref{cond:condition for induction of submersion propn} is satisfied by $\{[v^1,v^2]\}$. For $0<\epsilon<\frac{\delta}{\sup_j\lvert\lvert v^j \rvert\rvert_{\infty}}$ to be determined, we define $\xi=(1,2,1)$ (so that $\ell=\lvert \xi\rvert=3$), $U_{\gamma}:=\{(s,t,u)\in \circdelta_3^{\epsilon}: s,u\leq \gamma t\}$ for $\gamma>0$ and $L=\partial_s-\partial_u$. 

It is immediate that \eqref{eq:positive Lebesgue lim inf for induction} is satisfied, so we have only to check \eqref{eq:expression for diff operator applied to Phixi for induction proof} is satisfied for some positive function $f((s,t,u))$. We write $\unt=(s,t,u)$.

For $x\in K$ we calculate (using that $s,u=o_{\gamma\ra 0}(1)t$) that
\[
\begin{split}
\partial_u\Phi^{\xi}(x,\unt)=v^1(x+v^1(x)s+{v^2}(x)t+v^1(x)u+to_{\lvert \unt\rvert\ra 0}(1))
=v^1(x)+t(Dv^1(x){v^2}(x)+o_{\gamma\vee\lvert\unt\rvert\ra 0}(1))\\
\partial_s\Phi^{\xi}(x,\unt)=D(\varphi^{v^1}_u\circ \varphi^{v^2}_t)(\varphi_s^{v^1}(x)){v^1}(\varphi_s^{v^1}(x))=D\varphi^{v^1}_u(\varphi^{v^2}_t\circ\varphi^{v^1}_s(x))D\varphi^{v^2}_t(\varphi^{v^1}_s(x)){v^1}(\varphi^{v^1}_s(s))\\
=(\text{Id}+to_{\gamma\ra 0}(1))(\text{Id}+tD{v^2}(x)+to_{\lvert t\rvert\vee\gamma\ra 0}(1))({v^1}(x)+to_{\gamma\ra 0}(1))\\
={v^1}(x)+t(D{v^2}(x){v^1}(x)+o_{\gamma\vee\lvert\unt\rvert\ra 0}(1)).
\end{split}
\]
From this we obtain
\[
L\Phi^{\xi}(x,\unt)=t([{v^1},{v^2}](x)+o_{\gamma\vee\lvert\unt\rvert\ra 0}(1))\quad \text{for}\quad x\in K.
\]

Therefore we have \eqref{eq:expression for diff operator applied to Phixi for induction proof} with $f((s,t,u))=t$.
\end{proof}
Having established that $\{V\}$ satisfies Condition \ref{cond:condition for induction of submersion propn} for all $V\in \calV_1$, Lemma \ref{lem:union of two families of vector fields satisfies submersion condition} implies that $\calV_1$ satisfies Condition \ref{cond:condition for induction of submersion propn}, which we recall suffices to establish the existence of $U,\xi,\epsilon$ and $\delta$ providing for Part \ref{enum:submersion on open set for correct sequence of states} of Proposition \ref{prop:Quantities for Ck density thm}.

\subsubsection*{Parts \ref{enum:averaged vector field close for Q in open cone after time T} and \ref{enum:access pt of propn C infty density} of Proposition \ref{prop:Quantities for Ck density thm}}

We define $K^{Q}_t:=\bar\varphi_t^{Q}(M)$ for $t\geq 0$ and $Q\in \calQ$. Then $K^{Q_{\ast}}_t\cap \{x:d(x,K^{Q_{\ast}}\geq \frac{\delta}{2})\}$ is a descending sequence of compact sets with intersection $K^{Q_{\ast}}\cap\{x:d(x,K^{Q_{\ast}}\geq \frac{\delta}{2}\}=\emptyset$, so that for some $T$ large enough $K^{Q_{\ast}}_T\subseteq B(K^{Q_{\ast}}, \frac{\delta}{2})$. This determines $T<\infty$.

We recall that $\varpi_Q$ is the stationary distribution for the continuous-time Markov chain on $E$ with rate matrix $Q$. It follows from \cite[theorems 7 and 8]{Lax2007} that $\varpi_Q$ is a continuous function of $Q$. Therefore for some $r>0$ sufficiently small, we can ensure that $d(\bar\varphi^Q_T(x),\bar\varphi^Q_T(x^{\ast}))<\delta$ for every $Q\in B(Q^{\ast},r)$ and $x\in N$. Since the averaged vector field for $\lambda Q$ is the same as for $Q$, for all $\lambda>0$ and $Q\in \calQ$, we have that the above is true for all $Q\in \calQ_{\ast}:=\cup_{\lambda >0}\{\lambda Q':Q'\in B(Q_{\ast},r)\}$. Thus we have established the existence of $T$, $N$ and $\calQ_{\ast}$ providing for Proposition \ref{prop:Quantities for Ck density thm}.

We finally turn to Part \ref{enum:access pt of propn C infty density}. Assumption \ref{assum:accessible Hormander point} and the continuity of the flow maps imply that for all $x\in M$ there exists an open ball $B_x\ni x$, a sequence $\eta_x\in \calI$ of length $\ell_x=\lvert \eta_x\rvert$ and some $T_x<\infty$ such that for all $x'\in B_x$ there exists $\unt'\in\circdelta^{\ell_x}_{T_x}$ satisfying $\Phi^{\eta_x}_{\unt}(x')\in N$. Since $M$ is compact, we may take $x_1,\ldots,x_m$ such that $B_{x_1},\ldots,B_{x_m}$ cover $M$. We define $\eta:=\eta_{x_1}\oplus\ldots\oplus \eta_{x_m}$, which is of length $\ell=\ell_{x_1}+\ldots+\ell_{x_m}$, and $T':=T_{x_1}'+\ldots+T'_{x_m}+1$. For $x'\in B_{x_j}$ we take $\unt'_j\in \circdelta_{\ell_{x_j}}^{T_{x_j}}$ such that $\Phi^{\eta_{x_j}}_{\unt'_j}(x')\in N$. Then taking $\unt'_k\in\circdelta_{\ell_{x_k}}^{T_{x_k}}$ for $k\in \{1,\ldots,m\}\setminus \{j\}$ to be sufficiently small, we have that $\unt':=\unt'_1\oplus\ldots\oplus\unt'_j\oplus\ldots\oplus\unt'_m\in\circdelta_{\ell}^{T'}$ satisfies $\Phi^{\eta}_{\unt'}(x')\in N$.

\qed

Having established Proposition \ref{prop:Quantities for Ck density thm}, we have concluded the proof of Theorem \ref{theo:PDMPs with fast jump rates have Ck versions}.

\qed

\section{Proof of Theorem \ref{theo:PDMPs with slow jump rates have unbounded density}}\label{section:proof of unbounded density}

We fix $\Gamma$ satisfying Condition \ref{cond:sets giving rise to unbounded densities} and $Q\in\calQ$ satisfying \eqref{eq:inequality for unbounded density}.

Given $\uni=(i_1,\ldots,i_{\ell})\in E^{\ell}$ and $\unt=(t_1,\ldots,t_{\ell})\in \Rm_{\geq 0}^{\ell}$, we define 
\[
\Phi^{-\uni}(x,-\unt):=\varphi^{i_1}_{-t_1}\circ\ldots\circ\varphi^{i_{\ell}}_{-t_{\ell}}(x)\in \Rm^d,\quad x\in\Rm^d.
\]

The weak H\"ormander condition is satisfied at every $x\in \Gamma$, so it is also satisfied at every $x\in\Gamma$ if we replace the vector fields $v^1,\ldots,v^n$ with $-v^1,\ldots,-v^n$. We fix for the time being $x\in \Gamma$ and $z\in M$. We may choose $0<\epsilon_x<\frac{d(\Gamma,\partial N)}{10\sup_j\lvert\lvert v^j\rvert\rvert}$, $\xi^x\in \calI$ of length $\ell_x=\lvert \xi^x\rvert$ and $\unt_x\in \circdelta^{\ell_x}_{\epsilon_x}$ such that
\[
\circdelta_{\ell_x}^{\epsilon_x}\ni \unt\mapsto \Phi^{-\xi^x}(x,-\unt)
\]
is a submersion at $\unt=\unt_x$ by the proof of \cite[Theorem 4.4]{Benaim2015a}. Therefore, setting $y_x:=\Phi^{-\xi^x}(x,-\unt)\in N$,
\[
\circdelta_{\ell_x}^{\epsilon_x}\ni \unt\mapsto \Phi^{\xi^x}(y_x,\unt)
\]
is a submersion at $\unt=\unt_x$. For $\uni\in \calI$ we define the stopping time $\tilde{\tau}_{\uni}$ and transition kernel $\tilde{S}^{\uni}$ by
\[
\begin{split}
\tilde{\tau}_{\uni}:=\inf\{t>0:I_t\neq I_{t-}\quad\text{and}\quad I_{[t',t)}=\uni,\quad\text{for some}\quad 0<t'<t\quad\text{such that}\quad I_{t'}\neq I_{t'}\},\\ \tilde{S}^{\uni}((x,i),\cdot):=\Pm((X_{\tilde{\tau}},I_{\tilde{\tau}})\in \cdot)).
\end{split}
\]

We take $\tilde{i}\in E$ such that $\xi^x_1\neq \tilde{i}$. Proposition \ref{prop:general section smooth density propn} then provides for $\tilde{r}_x,\tilde{c}_x>0$ such that $\mu \tilde{S}_{\xi^x}(\cdot)\geq \tilde{c}_x\Leb_{\lvert_{B(x,\tilde{r}_x)\otimes \{i\}}}(\cdot)$ for all $\mu\in \calP(B(y_x,\tilde{r}_x)\times \{\tilde{i}\})$. Given $z\in M$, $y_x$ being accessible implies that there exists some $\delta_{zx},\tilde{c}_{zx}>0$ and $\eta_{zx}\in \calI$ such that
\[
\tilde{S}_{\eta_{zx}}((w,j),B(y_x,r_x)\times \{\tilde{i}\})\geq \tilde{c}_{zx}\quad \text{for all}\quad w\in B(z_x,\delta_{zx})\cap M\quad\text{and}\quad j\in E.
\]
We define the stopping time $\bar{\tau}_{zx}$ and the kernel $\bar{S}^{zx}$ to respectively be
\[
\begin{split}
\bar{\tau}_{zx}:=\inf\{t>\tau_{\eta_{zx}}:I_t\neq I_{t-}\quad\text{and}\quad I_{[t',t)}=\xi^x,\quad\text{for some}\quad \tau_{\eta_{zx}}<t'<t\quad\text{such that}\quad I_{t'}\neq I_{t'}\},\\ K_{zx}((w,k),\cdot):=\Pm((X_{\tau_{zx}},I_{\tau_{zx}})\in \cdot)).
\end{split}
\]
We have that
\[
\bar{S}_{zx}=\tilde{S}_{\eta_{zx}}\tilde{S}_{\xi^x}.
\]
Thus there exists $\bar{r}_{zx},\bar{c}_{zx}>0$ such that $\bar{S}_{zx}((w,j),\cdot)\geq \bar{c}_{zx}\Leb_{B(x,\bar{r}_{zx})\otimes \{i\}}$ for all $(w,j)\in B(z,\bar{r}_{zx})$. 

By the compactness of $\Gamma\times M$, we may take $(z_1,x_1),\ldots,(z_m,x_m)$ such that $\{B(z_j,\bar{r}_{z_jx_j})\times B(x_j,\bar{r}_{z_jx_j}):1\leq j\leq m\}$ cover $\Gamma\times M$. We define the stopping time $\tau$ by selecting $U$ uniformly from $\{1,\ldots,m\}$ and defining 
\[
\tau:=\bar{\tau}_{z_Ux_U}.
\]
We then define the kernels $S$ and $G$ by
\[
S((x,i),\cdot):=\Pm_{(x,i)}((X_{\tau},I_{\tau})\in \cdot),\quad G((x,i),\cdot):=\expE_{(x,i)}\Big[\int_0^{\tau}\delta_{(X_s,I_s)}ds\Big].
\]

Therefore there exists a non-negative continuous function $\phi$ on $M$ which is strictly positive on $\Gamma$ (hence uniformly bounded away from $0$ on $\Gamma$ since $\Gamma$ is compact) such that $\mu S(dx\times\{i\})\geq \phi(x)dx$ for all $\mu\in\calP(M\times E)$. Thus $S$ satisfies Doeblin's criterion, hence it has a unique stationary distribution and we can apply Proposition \ref{prop:relation pi tilde with omega tilde G tilde}. Since $\omega=\omega S$, this implies that $\omega (dx\times \{i\})\geq \phi(x)dx$. We abuse notation by writing $\phi$ both for the function $\phi$ and the measure with density $\phi$ on $M\times \{i\}$ (and no mass on the other copies of $M$).

We now define the stopping time $\tau_0$ and kernels $(G_t)_{0\leq t\leq \infty}$ by
\[
\tau_0:=\inf\{t>0:I_{t}\neq I_{t'}\},\quad G_t((x,j),\cdot):=\expE_{(x,j)}\Big[\int_0^{\tau_0\wedge t}\delta_{(X_s,I_s)}(\cdot)ds\Big],\quad 0\leq t<\infty.
\]
Then since $\tau_0\leq \tau$ by construction, $G_t\leq G_{\infty}\leq G$ for all $t<\infty$. Therefore
\[
\expE_{\omega}[\tau]\pi=\omega G\geq \omega G_{\infty}\geq \phi G_t\quad \text{for all}\quad t<\infty.
\]
We can calculate by the change of variables formula that $\phi G_t$ has a density (which we label $\rho_t$) on $M\times \{i\}$ given by
\[
\rho_t(y)=\int_0^{t}e^{Q_{ii}s}\text{det}(D_y\varphi^i_{-s}(y))\phi(\varphi^i_{-s}(y))ds=\int_0^{t}e^{Q_{ii}s}e^{\int_0^s\text{tr}(Dv^i(\varphi^i_{-r}(y))dr}\phi(\varphi^i_{-s}(y))ds.
\]
Since $\Gamma$ is backward invariant under $\varphi^i$, we observe that 
\[
\rho_t(y)\geq \int_0^{t}e^{(Q_{ii}+R(\Gamma,i))s}\phi(\varphi^i_{-s}(y))ds\underbrace{\geq}_{\text{by }\eqref{eq:inequality for unbounded density}} t\inf_{x\in \Gamma}\phi(x)\quad\text{for}\quad y\in \Gamma.
\]
Therefore $(\rho_t)_{t=1}^{\infty}=(\phi G_t)_{t=1}^{\infty}$ is a non-decreasing sequence of non-negative continuous functions, which converge to $+\infty$ on $\Gamma$. This implies that $\pi$ is bounded from below by a lower semicontinuous function which is everywhere infinite on $\Gamma\times\{i\}$.

{\textbf{Acknowledgement:}}  This work was funded by grant 200020 196999 from the Swiss National Foundation.

\qed
\bibliography{Library.tex}

\begin{thebibliography}{1}

\bibitem{Bakhtin2012}
Yuri Bakhtin and Tobias Hurth.
\newblock Invariant densities for dynamical systems with random switching.
\newblock {\em Nonlinearity}, 25, 2012.

\bibitem{Bakhtin2017}
Yuri Bakhtin, Tobias Hurth, Sean~D. Lawley, and Jonathan~C. Mattingly.
\newblock Smooth invariant densities for random switching on the torus.
\newblock {\em Nonlinearity}, 31:1331--1350, 2018.

\bibitem{Bakhtin2021}
Yuri Bakhtin, Tobias Hurth, Sean~D. Lawley, and Jonathan~C. Mattingly.
\newblock Singularities of invariant densities for random switching between two
  linear odes in 2d.
\newblock {\em SIAM Journal on Applied Dynamical Systems}, 20, 2021.

\bibitem{Bakhtin2015}
Yuri Bakhtin, Tobias Hurth, and Jonathan~C. Mattingly.
\newblock Regularity of invariant densities for 1d-systems with random
  switching.
\newblock {\em Nonlinearity}, 28:3755--3787, 2015.

\bibitem{Benaim2015a}
Michel Benaïm, Stéphane~Le Borgne, Florent Malrieu, and Pierre-André Zitt.
\newblock Qualitative properties of certain piecewise deterministic Markov
  processes.
\newblock {\em Ann. Inst. H. Poincaré Probab. Statist.}, 51:1040--1075, 2015.

\bibitem{Benaim2022}
Michel Benaïm and Tobias Hurth.
\newblock {\em Markov Chains on Metric Spaces: A Short Course}.
\newblock Springer, 2022.

\bibitem{Lax2007}
Peter~D Lax.
\newblock {\em Linear Algebra and Its Applications}.
\newblock Wiley-Interscience, second edition, 2007.

\bibitem{Malrieu2014}
Florent Malrieu.
\newblock Some simple but challenging Markov processes.
\newblock {\em Annales de la faculté des Sciences de Toulouse Mathématiques},
  24, 2014.

\end{thebibliography}
\bibliographystyle{plain}
\end{document}